\title{Betti numbers and stability for configuration spaces\\via factorization homology}
\author{Ben Knudsen}
\address{Department of Mathematics\\
Harvard University\\
1 Oxford St\\
Cambridge, MA 02138}
\email{knudsen@math.harvard.edu}
\urladdr{scholar.harvard.edu/knudsen/}
\theoremstyle{definition}\newtheorem{definition}{Definition}[section]
\theoremstyle{theorem}
\theoremstyle{remark}\newtheorem*{conventions}{Conventions}
\theoremstyle{remark}\newtheorem*{acknowledgments}{Acknowledgments}
\theoremstyle{remark}\newtheorem{example}[definition]{Example}
\theoremstyle{definition}\newtheorem*{convention}{Convention}
\theoremstyle{theorem}\newtheorem{theorem}[definition]{Theorem}
\theoremstyle{remark}\newtheorem{remark}[definition]{Remark}
\theoremstyle{corollary}\newtheorem{corollary}[definition]{Corollary}
\theoremstyle{theorem}\newtheorem{proposition}[definition]{Proposition}
\newcommand{\Conf}{\mathrm{Conf}}
\newcommand{\Ch}{\mathcal{C}\mathrm{h}}
\newcommand{\Sym}{\mathrm{Sym}}
\newcommand{\Emb}{\mathrm{Emb}}
\newcommand{\Mfld}{\mathcal{M}\mathrm{fld}}
\newcommand{\hofiber}{\mathrm{hofiber}}
\newcommand{\Alg}{\mathrm{Alg}}
\newcommand{\Mod}{\mathrm{Mod}}
\newcommand{\Lie}{\mathcal{L}}
\newcommand{\Disk}{\mathcal{D}\mathrm{isk}}
\newcommand{\Fr}{\mathrm{Fr}}
\newcommand{\fr}{\mathrm{fr}}
\newcommand{\sgn}{\mathrm{sgn}}
\newcommand{\Map}{\mathrm{Map}}
\DeclareMathOperator*{\hocolim}{\mathrm{hocolim}}
\DeclareMathOperator*{\colim}{\mathrm{colim}}
\begin{document}

\begin{abstract}    
Using factorization homology, we realize the rational homology of the unordered configuration spaces of an arbitrary manifold $M$, possibly with boundary, as the homology of a Lie algebra constructed from the compactly supported cohomology of $M$. By locating the homology of each configuration space within the Chevalley-Eilenberg complex of this Lie algebra, we extend theorems of B\"{o}digheimer-Cohen-Taylor and F\'{e}lix-Thomas and give a new, combinatorial proof of the homological stability results of Church and Randal-Williams. Our method lends itself to explicit calculations, examples of which we include.
\end{abstract}

\maketitle


\section{Introduction}

We study the configuration space $B_k(M)$ of $k$ unordered points in a manifold $M$, defined as $$B_k(M)=\Conf_k(M)_{\Sigma_k}:=\big\{(x_1,\ldots,x_k)\in M^k:x_i\neq x_j \text{ for } i\neq j\big\}/\Sigma_k,$$ where $\Sigma_k$ acts by permuting the $x_i$. Our main theorem concerns the homology of these spaces.

\begin{theorem}\label{with grading} Let $M$ be an $n$-manifold.  There is an isomorphism of bigraded vector spaces $$\bigoplus_{k\geq0}H_*(B_k(M);\mathbb{Q})\cong H^\Lie(H_{c}^{-*}(M;\Lie(\mathbb{Q}^w[n-1]))).$$ Here $H_c^{-*}$ denotes compactly supported cohomology, $\mathbb{Q}^w$ is the orientation sheaf of $M$, $H^\Lie$ denotes Lie algebra homology, and $\Lie$ is the free graded Lie algebra functor. The auxiliary grading on the left is by cardinality of configuration and on the right by powers of the Lie generator.
\end{theorem}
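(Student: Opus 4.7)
The plan is to realize the left-hand side as factorization homology of a free $E_n$-algebra, identify that factorization homology with the Chevalley--Eilenberg complex of the Lie algebra in the statement, and then pass to homology.

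First I would show that $\bigoplus_{k\geq 0} C_*(B_k(M);\mathbb{Q})$ is computed by $\int_M \Free_{E_n}(\mathbb{Q}^w)$, where the orientation sheaf accounts for $M$ not necessarily being framed. This follows from the standard formula expressing factorization homology of a free $E_n$-algebra on a generator $V$ as $\bigoplus_k (\Conf_k(M)\otimes V^{\otimes k})_{h\Sigma_k}$; specialising $V=\mathbb{Q}^w$ recovers the rational chains on the unordered configuration spaces, with the cardinality grading matching the grading by number of tensor factors of $V$.

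Next I would invoke a Lie-algebraic model for factorization homology. The starting point is $E_n$-Koszul duality, which identifies the free $E_n$-algebra on a chain complex $W$ with the Chevalley--Eilenberg chains of the free $(n-1)$-shifted Lie algebra $\Lie(W[n-1])$. Globalising this identification over $M$ and using a non-abelian Poincar\'e duality argument to absorb the manifold into the coefficients, one obtains an equivalence
\[
\int_M \Free_{E_n}(\mathbb{Q}^w)\;\simeq\;C^\Lie_*\bigl(H_c^{-*}(M;\Lie(\mathbb{Q}^w[n-1]))\bigr)
\]
of bigraded chain complexes, in which the cardinality grading on the left corresponds to the word-length grading on the free Lie algebra on the right. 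Passing to homology on both sides then yields the stated isomorphism.

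The main obstacle is establishing the displayed equivalence in the required generality: for a manifold possibly with boundary, with the correct twist by the orientation sheaf, and with the bigrading preserved. The crux is the combination of $E_n$-Koszul duality (relating free $E_n$-algebras to shifted free Lie algebras) with Poincar\'e--Lefschetz duality (to identify the coefficients of the resulting Lie algebra with the compactly supported cohomology $H_c^{-*}(M;\mathbb{Q}^w)$). The rational setting is essential here, both because Koszul duality for $E_n$ simplifies drastically over $\mathbb{Q}$ and because one needs to invert the orders of the symmetric groups appearing in the $\Sigma_k$-orbits above.
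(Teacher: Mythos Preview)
Your outline follows the same overall strategy as the paper---realize both sides as the factorization homology of a free $n$-disk algebra, then compute the latter via the enveloping-algebra/Chevalley--Eilenberg machinery---but there are two genuine errors and one substantial gap.

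\textbf{The twist is in the wrong place.} You take the free algebra on $\mathbb{Q}^w$ (i.e.\ on the sign representation $\mathbb{Q}^{\det}$ of $O(n)$), claiming this is needed ``because $M$ is not framed.'' It is not: the free $n$-disk algebra $\mathbb{F}_n(\mathbb{Q})$ on the \emph{trivial} $O(n)$-module already has factorization homology $\bigoplus_k C_*(B_k(M);\mathbb{Q})$ on any smooth $M$, framed or not (this is Corollary~\ref{homology}). Feeding in $\mathbb{Q}^{\det}$ instead yields the \emph{twisted} homology $H_*(B_k(M);\mathbb{Q}^w)$ (Proposition~\ref{twisted homology}). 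The orientation twist on the Lie side arises for a different reason: the identification of $\mathbb{F}_n(K)$ with an enveloping algebra requires replacing $K$ by $(\mathbb{R}^n)^+\otimes K[-1]$ with its diagonal $O(n)$-action (Proposition~\ref{comparison}), and it is the $O(n)$-action on $(\mathbb{R}^n)^+\simeq S^n$ that eventually produces $\mathbb{Q}^w$. Your local Koszul-duality statement ``$\Free_{E_n}(W)\simeq C^\Lie(\Lie(W[n-1]))$'' is correspondingly too naive: the correct target is $C^\Lie$ of a mapping Lie algebra out of $(\mathbb{R}^n)^+$, and the passage from there to the statement of the theorem requires the $O(n)$-equivariant formality $C_*(S^n;\mathbb{Q})\simeq \mathbb{Q}\oplus\mathbb{Q}^{\det}[n]$ (Theorem~\ref{equivariant formality}), which is nontrivial.

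\textbf{Formality is missing.} Non-abelian Poincar\'e duality gives you $C^\Lie$ of a mapping Lie algebra built from cochains on $M^+$, not from $H_c^{-*}(M)$. Replacing cochains by cohomology inside $C^\Lie$ requires knowing that the relevant Lie algebra is formal; this is not automatic, and the paper supplies a separate argument (Proposition~\ref{Lie formality}) exploiting the two-step nilpotence of $\Lie(\mathbb{Q}^w[n-1])$. Your invocation of ``Poincar\'e--Lefschetz duality'' does not address this. Finally, the claim that the equivalence respects the bigrading is not a formality: the paper devotes \S\ref{comultiplicative structure} to it, using the coalgebra structure on both sides and a spectral sequence comparison (Propositions~\ref{bigraded isomorphism} and~\ref{spectral sequence}), and you should not simply assert it.
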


Our methods apply equally to the calculation of the twisted homology of configuration spaces and of the homology of certain relative configuration spaces defined for manifolds with boundary; precise statements may be found in Theorem \ref{twisted version} and Theorem \ref{boundary case}, respectively. All results and arguments herein are valid over an arbitrary field of characteristic zero.

The study of configuration spaces is classical. To name some highlights, the space $B_k(\mathbb{R}^2)$ is a classifying space for the braid group on $k$ strands (see \cite{Artin}); the space $\Conf_k(\mathbb{R}^n)$ has the homotopy type of the space of $k$-ary operations of the little $n$-cubes operad and so plays a central role in the theory of $n$-fold loop spaces (see e.g. \cite{CLM}, \cite{May}, \cite{Segal}); certain spaces of labeled configurations provide models for more general types of mapping spaces (see \cite{Bodigheimer}, \cite{McDuff}, \cite{Salvatore}, \cite{Segal}); and, according to a striking theorem of \cite{LS}, the homotopy type of $B_k(M)$ is not an invariant of the homotopy type of $M$. 

As this last fact indicates, configuration spaces depend in subtle ways on the structure of the background manifold. On the other hand, the homology of these spaces has often been shown to be surprisingly simple, provided one is willing to work over a field of characteristic zero. Indeed, B\"{o}digheimer-Cohen-Taylor show that the Betti numbers of $B_k(M)$ are determined by those of $M$ when $M$ is of odd dimension, and F\'{e}lix-Thomas show that, in the even-dimensional case, the Betti numbers of $B_k(M)$ are determined by the rational cohomology ring of $M$, as long as $M$ is closed, orientable, and nilpotent; see \cite{BCT} and \cite{FT}, respectively. We recover extensions of these results as immediate consequences of Theorem \ref{with grading}.

\begin{corollary}\label{main corollary} The groups ${H}_*(B_k(M);\mathbb{Q})$ depend only on $n$ and
\begin{itemize}
\item the graded abelian group $H_*(M;\mathbb{Q})$ if $n$ is odd; or
\item the cup product $H_{c}^{-*}(M;\mathbb{Q}^w)^{\otimes 2}\to H_c^{-*}(M;\mathbb{Q})$ if $n$ is even.
\end{itemize}
\end{corollary}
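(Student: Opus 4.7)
The plan is to apply Theorem \ref{with grading} and then analyze the Lie algebra $L := H_c^{-*}(M; \Lie(\mathbb{Q}^w[n-1]))$, together with its Chevalley-Eilenberg homology, separately in each parity. Since the $k$-th summand of $H^\Lie(L)$ in the Lie-generator grading computes $H_*(B_k(M); \mathbb{Q})$, it suffices to pin down the dependence of $L$ on the cohomological data of $M$.

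In the odd case, my first observation is that $n-1$ is even, so $[v,v] = 0$ for the $1$-dimensional generator $v$ of $\mathbb{Q}^w[n-1]$ by graded antisymmetry in characteristic zero. Thus $\Lie(\mathbb{Q}^w[n-1]) \cong \mathbb{Q}^w[n-1]$ as an abelian Lie algebra, and the induced bracket on $L$ is likewise zero. The Chevalley-Eilenberg complex of an abelian Lie algebra has trivial differential, so
$$H^\Lie(L) \cong \Sym\bigl(L[1]\bigr) \cong \Sym\bigl(H_c^{-*}(M; \mathbb{Q}^w)[n]\bigr)$$
as graded vector spaces. I then invoke Poincar\'e--Lefschetz duality with orientation twist, which gives $H_c^k(M; \mathbb{Q}^w) \cong H_{n-k}(M; \mathbb{Q})$ in both the orientable and non-orientable settings. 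This exhibits $L[1]$, and therefore $\Sym(L[1])$, as depending only on $n$ and the graded vector space $H_*(M;\mathbb{Q})$.

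In the even case, $n-1$ is odd, so $\Lie(\mathbb{Q}^w[n-1])$ is two-dimensional, spanned by $v$ in degree $n-1$ and $[v,v]$ in degree $2(n-1)$, all higher brackets vanishing by Jacobi. The bracket on $L$ is then nonzero only on two $v$-components and lands in the $[v,v]$-component via a pairing
$$H_c^{-*}(M;\mathbb{Q}^w) \otimes H_c^{-*}(M;\mathbb{Q}^w) \to H_c^{-*}(M;\mathbb{Q}^w \otimes \mathbb{Q}^w) \cong H_c^{-*}(M;\mathbb{Q}),$$
the target trivialization coming from the canonical isomorphism $\mathbb{Q}^w \otimes \mathbb{Q}^w \cong \mathbb{Q}$. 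My key step is to identify this pairing with the cup product named in the corollary. Once this identification is in place, $L$ is determined as a differential graded Lie algebra by that cup product, and so is its Chevalley--Eilenberg homology.

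The principal obstacle is precisely this last identification in the even case: showing that the Lie bracket on $L$ induced by the free Lie algebra structure on the coefficients coincides with the twisted cup product pairing into $H_c^{-*}(M;\mathbb{Q})$. This is the expected structure on cohomology with coefficients in a Lie algebra, but verifying it requires careful tracking of Koszul signs and of the orientation twist $\mathbb{Q}^w$; once settled, the corollary follows immediately from Theorem \ref{with grading} in both parities.
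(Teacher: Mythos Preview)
Your proposal is correct and follows essentially the same approach as the paper: the corollary is deduced from Theorem \ref{with grading} by analyzing the free Lie algebra $\Lie(\mathbb{Q}^w[n-1])$ in each parity, using that it is abelian (so the Chevalley--Eilenberg differential vanishes and Poincar\'e duality finishes) when $n$ is odd, and two-step nilpotent with bracket given by the twisted cup product when $n$ is even. The paper records these computations explicitly in \S\ref{formulas} (see especially Corollary \ref{even corollary}), and the identification you flag as the ``principal obstacle'' is exactly what the paper uses there, citing the formula $[a\otimes v, b\otimes w]=(-1)^{|v||b|}ab\otimes[v,w]$ from Proposition \ref{cotensor identification}.
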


The computational power of Theorem \ref{with grading} lies in the bigrading, which permits one to isolate the homology of a single configuration space within the Chevalley-Eilenberg complex computing the appropriate Lie homology. Employing this strategy, we show that the chain complexes computing $H_*(B_k(M);\mathbb{Q})$ exhibited in \cite{BCT} and \cite{FT} are isomorphic to subcomplexes of the Chevalley-Eilenberg complex; precise statements appear in \S\ref{formulas}. Better yet, in dealing with the entire Chevalley-Eilenberg complex at once, one is able to perform computations for all $k$ simultaneously; see \S\ref{examples}. 

Another important aspect of the study of configuration spaces is the phenomenon of \emph{homological stability}. As $k$ tends to infinity, the Betti numbers of $B_k(M)$ are eventually constant, despite the absence of a map of spaces $B_k(M)\to B_{k+1}(M)$ in general (see \cite{Church}, \cite{CEF}, \cite{ORWstab}, \cite{CP}, and the references therein). Here too, characteristic zero is special. 

Regarding stability, we prove the following:

\begin{theorem}\label{stability theorem}
Let $M$ be a connected $n$-manifold with $n>1$. Cap product with the unit in $H^0(M;\mathbb{Q})$ induces a map $$H_*(B_{k+1}(M);\mathbb{Q})\to H_*(B_k(M);\mathbb{Q})$$ that is \begin{itemize}
\item an isomorphism for $*<k$ and a surjection for $*=k$ when $M$ is an orientable surface; and
\item an isomorphism for $*\leq k$ and a surjection for $*=k+1$ in all other cases.
\end{itemize} 
\end{theorem}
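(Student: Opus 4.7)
Via Theorem \ref{with grading}, the statement translates into a question about the Chevalley--Eilenberg complex $\Sym(\Lambda[1])$ of the Lie algebra $\Lambda=H_c^{-*}(M;\Lie(\mathbb{Q}^w[n-1]))$, with configuration cardinality matching the weight grading (``powers of the Lie generator''). The connectedness of $M$ supplies a distinguished class $\omega\in\Lambda$ of weight one arising from the generator of $H_c^n(M;\mathbb{Q}^w)\cong\mathbb{Q}$. The plan is to realize the cap-product map by a chain-level derivation and reduce the stability range to a vanishing bound for an auxiliary complex.

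\smallskip

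Define $\partial_\omega$ to be the unique derivation of $\Sym(\Lambda[1])$ vanishing on every Lie atom other than $\omega$ and sending $\omega$ to $1$. A direct calculation shows $[d,\partial_\omega]=0$: the brackets produced by the CE differential are Lie atoms distinct from $\omega$ itself, and hence are annihilated by $\partial_\omega$, while on the remaining factors the Leibniz rule forces the two operations to agree. Tracing naturality in the factorization-homology model underlying Theorem \ref{with grading}, together with Poincar\'e--Lefschetz duality (which identifies $1\in H^0(M;\mathbb{Q})$ with the functional dual of $\omega\in H_c^n(M;\mathbb{Q}^w)$), shows that $\partial_\omega$ realizes cap product with the unit on Lie homology. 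It lowers weight by one and preserves homological degree, and is surjective at the chain level: any $\alpha$ with no bare $\omega$-factor satisfies $\partial_\omega(\omega\alpha)=\alpha$. Denoting the kernel by $K\subset\Sym(\Lambda[1])$ (the subcomplex of monomials with no bare $\omega$-atom), the short exact sequence $0\to K\to CE_*(\Lambda)\xrightarrow{\partial_\omega}CE_*(\Lambda)\to 0$ yields, in each weight $k+1$, a long exact sequence reducing the theorem to the vanishing of $H_*(K)$ in weight $k+1$ and degrees $*\leq k$ (or $*\leq k-1$ in the orientable surface case).

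\smallskip

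The vanishing bound follows from a careful accounting of shifted degrees together with the $\omega$-adic filtration on $\Lambda$ (the filtration by the number of $\omega$-letters inside each bracketed expression). For $n\geq 3$, after the shift $[n-1]$ every non-$\omega$ generator of $V=H_c^{-*}(M;\mathbb{Q}^w)[n-1]$ has large positive homological degree, and the filtration spectral sequence shows that any would-be low-degree class in $K$ involving $\omega$ through a bracket is a boundary; the net effect is that $H_*(K_{\text{weight }k+1})$ lives in degrees $>k+1$, giving the claimed isomorphism for $*\leq k$ and surjection for $*=k+1$. The orientable surface case ($n=2$) is the boundary of this estimate: $\omega$ then has odd shifted degree so the bracket $[\omega,\omega]$ is nonzero in $\Lambda$ and contributes a single extra low-degree weight-two atom sitting one degree below the generic bound, tightening the isomorphism range from $*\leq k$ to $*<k$. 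For non-orientable surfaces the orientation twist prevents this anomalous class from appearing and the generic range is recovered.

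\smallskip

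The principal technical obstacle is the vanishing bound for $H_*(K_{\text{weight }k+1})$. A naive estimate on the lowest degree of a weight-$j$ Lie atom is insufficient because iterated brackets can in principle lower degrees; the cleanest remedy is to run the spectral sequence of the $\omega$-adic filtration and show that the apparently low-degree classes become boundaries exactly when $n\geq 3$. A secondary subtlety is isolating the single surviving bracket $[\omega,\omega]$ in the orientable surface case and verifying that it is responsible for precisely the one-degree loss in the stability range; this requires unwinding the interplay between the orientation sheaf $\mathbb{Q}^w$ and the Lie-operadic tensor square, and checking that the resulting class indeed obstructs exactly the final degree in the claimed range.
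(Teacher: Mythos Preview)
Your framework is correct and matches the paper's: the cap product with the unit is realized at the chain level by the derivation $\partial_\omega$ (what the paper writes as $d/dp$ in Proposition~\ref{cap formula}), this map is surjective, and the theorem reduces to controlling its kernel $K$ in low degrees. From this point, however, the paper's argument is far simpler than what you propose, and your version of the key step contains errors that stem from a misreading of the Lie algebra.

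The Lie algebra $\mathfrak{g}_M=H_c^{-*}(M;\Lie(\mathbb{Q}^w[n-1]))$ is not free on the cohomology of $M$; rather, $\Lie(\mathbb{Q}^w[n-1])$ is the free Lie algebra on a \emph{single} generator, hence at most two-step nilpotent, and $\mathfrak{g}_M$ inherits this. The atoms of the Chevalley--Eilenberg complex are simply the elements of $H_c^{-*}(M;\mathbb{Q}^w)[n]$ (weight one) and, for $n$ even, of $H_c^{-*}(M;\mathbb{Q})[2n-1]$ (weight two); there are no iterated brackets and no ``$\omega$-letters inside bracketed expressions'' in the sense you describe. Your $\omega$-adic filtration and spectral sequence are therefore addressing a phantom difficulty. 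What the paper actually does (Proposition~\ref{divisibility}) is observe that every atom other than $p=\omega$ has degree at least equal to its weight, so that any weight-$(k+1)$ monomial not divisible by $p$ already has degree $\geq k+1$ \emph{at the chain level}---a direct count, not a spectral sequence argument. Your identification of the orientable-surface exception is also wrong: $[\omega,\omega]$ vanishes in every case, since it corresponds to $\omega\smile\omega\in H_c^{2n}(M;\mathbb{Q})=0$. The actual culprit is the weight-two atom arising from $H_c^n(M;\mathbb{Q})\cong\mathbb{Q}$, which sits in degree $n-1=1$; it is the unique atom with weight exceeding its degree, but being of odd degree its square vanishes, which is precisely why the stable range drops by exactly one. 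For non-orientable surfaces $H_c^n(M;\mathbb{Q})=0$ and this atom is absent.
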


The sense in which the homology of configuration spaces forms a coalgebra, so that the cap product is defined, will be explained in \S\ref{comultiplicative structure}. We lack a conceptual explanation for the exceptional behavior in dimension 2, as it emerges from our argument solely as a numerical/combinatorial coincidence.

This result improves on the stable range of \cite{Church} and very slightly on that of \cite{ORWstab}. As in the former work, our stable range can be further improved if the low degree Betti numbers of $M$ vanish. As the example of the Klein bottle shows, the bound $*\leq k$ is sharp in the sense that no better stable range holds for all manifolds that are not orientable surfaces. When $M$ is open, the surjectivity statement is proven in \cite{ORWstab}; to the author's knowledge, the result is new for compact manifolds.

Conceptually, we think of Theorem \ref{with grading} as providing an explanation and organizing principle for the behavior of configuration spaces in characteristic zero. The germ of our approach, and the source of the connection to Lie algebras, is the calculation, due to Arnold and Cohen, of the homology of the ordered configuration spaces of $\mathbb{R}^n$, which is the fundamental result of the subject (see \cite{Arnold}, \cite{CLM}). Specifically, for $n\geq2$, the homology groups of the spaces $\Conf_k(\mathbb{R}^n)$ form a shifted version of the operad governing Poisson algebras, with the shifted Lie bracket given by the fundamental class of $\Conf_2(\mathbb{R}^n)\simeq S^{n-1}$ (see \cite{Sinha} for a beautiful geometric discussion of this identification). Locally, then, configuration spaces enjoy a rich algebraic structure; factorization homology, our primary tool in this work, provides a means of assembling this structure across coordinate patches of a general manifold, globalizing the calculation of Arnold and Cohen. Theorem \ref{with grading} is the natural output of this procedure.

At a more formal level, we rely on the fact that the factorization homology of $M$, with coefficients taken in a certain free algebra, can be computed in two different ways. On the one hand, according to Proposition \ref{free calculation}, it has an expression in terms of the configuration spaces of $M$. On the other hand, the free algebra may be thought of as a kind of enveloping algebra, and a calculation of \cite{Knudsen} identifies the same invariant as Lie algebra homology. On the face of it, these calculations only coincide for framed manifolds; we show that they agree in general in characteristic zero.

In keeping with our metamathematical goal of making the case for factorization homology as a computational tool, we do not focus on the technical underpinnings of the theory. The interested reader may find these in \cite{Francis}, \cite{AFZ}, \cite{AF}, \cite{AFT1}, \cite{AFT2} \cite{FrancisTangent}, and \cite{HA}.

The paper following the introduction is split into seven sections. In \S2-3, we review the basics of factorization homology and discuss calculations thereof in several cases of interest. Theorem \ref{with grading} and its variants are proved in \S4 assuming several deferred results, and the classical results alluded to above follow. In \S5, we discuss coalgebraic phenomena arising from configuration spaces, which lead us to the proof of Theorem \ref{stability theorem} and one of the missing ingredients in the main theorem. Finally, \S6 is concerned with explicit computations, and \S7 supplies the remaining missing ingredients. 

\begin{conventions}\begin{enumerate}
\item In accordance with the bulk of the literature on factorization homology, we work in an $\infty$-categorical context, where for us an $\infty$-category will always mean a quasicategory. The standard references here are \cite{HTT} and \cite{HA}, but we will need to ask only very little of the vast theory developed therein, and the reader may obtain a sense of the arguments and results by substituting ``homotopy colimit'' for ``colimit'' everywhere, for example.
\item Every manifold is smooth and may be embedded as the interior of a compact manifold with boundary (such an embedding is not part of the data). We view manifolds as objects of the $\infty$-category $\Mfld_n$, the topological nerve of the topological category of $n$-manifolds and smooth embeddings, which is symmetric monoidal under disjoint union.
\item Our homology theories are valued in $\Ch_\mathbb{Q}$, the underlying $\infty$-category of the category of $\mathbb{Q}$-chain complexes equipped with the standard model structure. With the single exception of Theorem \ref{Eilenberg-Steenrod}, $\Ch_\mathbb{Q}$ is understood to be symmetric monoidal under tensor product. 
\item The homology of a chain complex $V$ is written $H(V)$, while the homology of a space $X$ is written $H_*(X)$. Hence $H_*(X)=H(C_*(X))$. If $\mathfrak{g}$ is a differential graded Lie algebra, then $H(\mathfrak{g})$ is a graded Lie algebra.
\item Chain complexes are homologically graded. If $V$ is a chain complex, $V[k]$ is the chain complex with $(V[k])_n=V_{n-k},$ and, for $x\in V$, the corresponding element in $V[k]$ is denoted $\sigma^kx$. Cohomology is concentrated in negative degrees; to reinforce this point, we write $H^{-*}(X)$ for the graded vector space whose degree $-k$ part is the $k$th cohomology group of $X$; for example, $$H^{-*}(S^n;\mathbb{Q})\cong\begin{cases}
\mathbb{Q}\quad&*\in\{-n,0\}\\
0\quad&\text{else.}
\end{cases}$$
\item If $X$ is a space and $V$ is a chain complex, the \emph{tensor of $X$ with $V$} is the chain complex $$X\otimes V:=C_*(X)\otimes V.$$
\item If $(X,A)$ is a pair of spaces, the quotient of $X$ by $A$ is the pointed space $X/A$ defined as the pushout in the diagram $$\xymatrix{
A\ar[d]\ar[r]&X\ar[d]\\
\mathrm{pt}\ar[r]&X/A.
}$$ In particular, we have $X/\varnothing=X_+$.
\item If $X$ is an object of the $\infty$-category $\mathcal{C}$ with an action of the group $G$, then $X_G$ denotes the $G$-coinvariants of $X$ (resp. $X^G$, invariants), which is an object of $\mathcal{C}$. When $\mathcal{C}$ is topological spaces or chain complexes, this object coincides in the homotopy category with \emph{homotopy} coinvariants.
\end{enumerate}
\end{conventions}

\begin{acknowledgments}
I am grateful to John Francis for suggesting this project, and for his help and patience. 

I would like to thank Lee Cohn, Elden Elmanto, Boris Hanin, Sander Kupers, Jeremy Miller, Martin Palmer, and Dylan Wilson for their comments on earlier versions of this paper; Joel Specter and Eric Wofsey for suggested examples; Bruno Vallette for an enlightening lecture; and Jordan Ellenberg and Aaron Mazel-Gee for crucial conversations leading to significant improvements of the results. 

After writing the paper, I learned that some of the same results are accessible through the work of Ezra Getzler by first passing to symmetric group invariants in the complex constructed in \cite{Getzler2step} and \cite{GetzlerHodge}, and then invoking Proposition \ref{technical lemma} below. I am grateful to Dan Petersen for bringing these papers to my attention. 

Revision was undertaken during visits to the Mathematisches Forschungsinstitut Oberwolfach and the Hausdorff Research Institute for Mathematics in Bonn.

Finally, I would like to thank the anonymous referees for their comments.
\end{acknowledgments}

\section{Factorization Homology}

\subsection{Homology theories}\label{factorization homology} In this section, we review the basic notions of factorization homology, alias topological chiral homology. The primary reference is \cite{Francis}. As there, our point of view is that factorization homology is a natural theory of homology for manifolds. To illustrate in what sense this is so, we first recall the classical characterization of ordinary homology, phrased in a way that invites generalization.

\begin{theorem}[Eilenberg-Steenrod]\label{Eilenberg-Steenrod}
Let $V$ be a chain complex. There is a symmetric monoidal functor $C_*(-; V)$ from spaces with disjoint union to chain complexes with direct sum, called \emph{singular homology with coefficients in $V$}, which is characterized up to natural equivalence by the following properties:
\begin{enumerate}
\item $C_*(\mathrm{pt}; V)\simeq V$;
\item the natural map $$\xymatrix{\displaystyle C_*(X_1;V)\bigoplus_{C_*(X_0;V)}C_*(X_2;V)\ar[r]& C_*(X;V)}$$ is an equivalence, where $X$ is the pushout of the diagram of cofibrations $X_1\hookleftarrow X_0\hookrightarrow X_2$.
\end{enumerate}
\end{theorem}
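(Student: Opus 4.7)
The plan is to establish existence and uniqueness separately. For existence, I would take $C_*(-;V) := C_*(-) \otimes V$, the ordinary singular chain complex tensored over $\mathbb{Q}$ with $V$. This is symmetric monoidal from $(\Top,\sqcup)$ to $(\Ch_\mathbb{Q},\oplus)$ because every singular simplex in $X_1 \sqcup X_2$ lands in exactly one component; it satisfies $C_*(\mathrm{pt})\otimes V\simeq V$ since $C_*(\mathrm{pt})\simeq\mathbb{Q}$; and the classical Mayer--Vietoris argument shows that $C_*$ sends homotopy pushouts of cofibrations to homotopy pushouts, while $-\otimes V$ preserves colimits in $\Ch_\mathbb{Q}$ because we work over a field.

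For uniqueness, let $F$ be any such functor. I would construct a natural equivalence $\eta : F \simeq C_*(-;V)$ by induction along the cellular filtration. Because any $\infty$-categorical functor preserves equivalences, CW approximation reduces the problem to comparing the two functors on CW complexes. The base case $\eta_{\mathrm{pt}}$ is axiom (1), and monoidality extends $\eta$ to any finite disjoint union of points, in particular to $S^0$. Since $D^n$ is equivalent to a point, $F(D^n)\simeq V$, and the pushout $S^n \simeq D^n \cup_{S^{n-1}} D^n$ together with axiom (2) determines $\eta_{S^n}$ by induction on $n$, starting from the identification on $S^0$. Given $\eta$ on the $(n-1)$-skeleton $X^{(n-1)}$ and on the attaching spheres, a single $n$-cell attachment
$$X^{(n)} \simeq X^{(n-1)} \cup_{\bigsqcup S^{n-1}} \bigsqcup D^n$$
extends $\eta$ to $X^{(n)}$ by the universal property of pushouts applied via axiom (2) and monoidality. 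This pins down $F$ on all finite CW complexes.

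The main obstacle is the passage from finite to arbitrary CW complexes, since axiom (2) a priori supplies only binary pushouts. The resolution is to present any CW complex $X$ as the mapping telescope $\mathrm{hocolim}_n X^{(n)}$ of its skeleta and to encode this sequential colimit as a single homotopy pushout whose legs are assembled from disjoint unions of the $X^{(n)}$; axiom (2) and the monoidal structure then determine $F(X)$, provided one interprets the monoidal axiom in an appropriate countable form. Phrased more cleanly within the $\infty$-categorical framework adopted in the paper, both functors in question are left Kan-extended along the inclusion of finite CW complexes into $\Top$, so that agreement on the finite case propagates automatically.
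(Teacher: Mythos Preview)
The paper does not prove this theorem. It is stated as a classical result attributed to Eilenberg--Steenrod (with reference [ES45] in the bibliography) and is used purely as motivation for the analogous Theorem~\ref{homology theories} on factorization homology. There is therefore no proof in the paper to compare your attempt against.

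That said, your sketch is essentially the standard argument, and the existence half is fine. For uniqueness, the inductive comparison on finite CW complexes is correct, but your handling of the passage to infinite complexes is where the real content lies, and it is not quite tied down. A symmetric monoidal functor $(\Top,\amalg)\to(\Ch_\mathbb{Q},\oplus)$ preserves only \emph{finite} coproducts a priori, so encoding a mapping telescope as a pushout of countable disjoint unions does not obviously fall under axiom (ii) plus monoidality; you need an additional hypothesis (e.g.\ that $F$ preserves filtered colimits, or equivalently that it is left Kan extended from finite complexes) to make this work. Your final sentence gestures at this, but the claim that ``both functors in question are left Kan extended'' is precisely what must be argued, not assumed---for the putative $F$ this is not automatic from the stated axioms. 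In practice one either restricts the characterization to finite complexes, or adds a filtered-colimit axiom, or interprets ``symmetric monoidal'' in a strong enough sense to include infinite coproducts.
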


Property (ii), a local-to-global principle equivalent to the usual excision axiom, is the reason that homology is computable and hence useful. 

Of course, ordinary homology is a homotopy invariant. In the study of manifolds, the equivalence relation of interest is often finer than homotopy equivalence, and one could hope for a theory better suited to such geometric investigations. To discover what form this theory might take, let us contemplate a generic symmetric monoidal functor $(\Mfld_n,\amalg)\to (\Ch_\mathbb{Q},\otimes)$. By analogy with Theorem \ref{Eilenberg-Steenrod}, we ask that this functor be determined by its value on $\mathbb{R}^n$, the basic building block in the construction of $n$-manifolds. Unlike a point, however, Euclidean space has interesting internal structure.

\begin{definition}
An \emph{$n$-disk algebra} in $\Ch_\mathbb{Q}$ is a symmetric monoidal functor $$\xymatrix{A:(\Disk_n,\amalg)\ar[r]& (\Ch_\mathbb{Q},\otimes)}$$ where $\Disk_n\subseteq \Mfld_n$ is the full subcategory spanned by manifolds diffeomorphic to $\amalg_k\mathbb{R}^n$ for some $k\in\mathbb{Z}_{\geq0}$.
\end{definition}

In other words, $\Disk_n$ is the (nerve of the) category of operations associated to the endomorphism operad of the manifold $\mathbb{R}^n$, and an $n$-disk algebra is an algebra over this operad. In contrast, the endomorphism operad of a point in topological spaces is the commutative operad, and every chain complex is canonically and essentially uniquely a commutative algebra in $(\Ch_\mathbb{Q},\oplus)$.

Taking the extra structure of $\mathbb{R}^n$ into account, \cite[3.24]{Francis} provides an analogous classification theorem.

\begin{theorem}[Ayala-Francis]\label{homology theories}
Let $A$ be an $n$-disk algebra. There is a symmetric monoidal functor $\int_{(-)}A$ from $n$-manifolds with disjoint union to chain complexes with tensor product, called \emph{factorization homology with coefficients in $A$}, which is characterized up to natural equivalence by the following properties:
\begin{enumerate}
\item $\int_{\mathbb{R}^n}A\simeq A$ as $n$-disk algebras;
\item the natural map $$\xymatrix{\displaystyle\int_{M_1}A\bigotimes_{\displaystyle\int_{M_0\times\mathbb{R}}A}\int_{M_2}A\ar[r]& \displaystyle\int_{M}A}$$ is an equivalence, where $M$ is obtained as the collar-gluing of the diagram of embeddings $M_1\hookleftarrow M_0\times\mathbb{R}\hookrightarrow M_2.$
\end{enumerate}
\end{theorem}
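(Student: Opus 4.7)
The plan is to construct $\int_{(-)}A$ as a symmetric monoidal left Kan extension and verify that it satisfies axioms (i) and (ii), then argue that any symmetric monoidal functor $F:\Mfld_n\to\Ch_\mathbb{Q}$ satisfying these axioms is forced to agree with it.

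For existence, I would set
$$\int_{(-)}A \;:=\; \Lan_\iota A,$$
where $\iota:\Disk_n\hookrightarrow \Mfld_n$ is the fully faithful symmetric monoidal inclusion and the Kan extension is performed in the $\infty$-operadic sense of \cite{HA}, so that the result is again symmetric monoidal. Pointwise, this computes as
$$\int_M A \;\simeq\; \colim_{(U\hookrightarrow M)\in \Disk_{n/M}} A(U).$$
Axiom (i) is then immediate: since $\iota$ is fully faithful, the Kan extension restricts to $A$ on $\Disk_n$, and the symmetric monoidal structure is preserved by the operadic Day convolution formalism.

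For axiom (ii), the heart of the argument, one must show that for a collar-gluing $M=M_1\cup_{M_0\times\mathbb{R}}M_2$ the natural map is an equivalence. The strategy is first to exhibit the indexing $\infty$-category $\Disk_{n/M}$ as an $\infty$-operadic pushout of $\Disk_{n/M_1}$ and $\Disk_{n/M_2}$ along $\Disk_{n/(M_0\times\mathbb{R})}$. The essential geometric input is that any finite disjoint union of disks embedded in $M$ may, after isotopy, be arranged so that each component lies entirely in $M_1$, in $M_2$, or in the open collar; this is a standard isotopy-extension argument. Given the categorical decomposition, one commutes the defining colimit past the pushout and identifies the result with the relative tensor product, which is itself computed as a sifted (bar) colimit compatible with the ambient symmetric monoidal structure of $\Ch_\mathbb{Q}$.

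For uniqueness, suppose $F$ is any symmetric monoidal functor satisfying (i) and (ii). By monoidality and (i), $F$ agrees with $A$ on all of $\Disk_n$. Every $n$-manifold admits an exhaustion $M=\bigcup_k U_k$ in which each $U_{k+1}$ is obtained from $U_k$ by a collar-gluing onto a disjoint union of Euclidean pieces, using the existence of a handle decomposition of a compactification. Iterating axiom (ii) determines $F(U_k)$ inductively from $F|_{\Disk_n}$, and passing to the sequential colimit (which $\int_{(-)}A$ likewise preserves, again by an appeal to excision and cofinality) yields $F(M)\simeq \int_M A$. Compatibility with the choices made in the decomposition is enforced by the universal property of the relative tensor product.

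The main obstacle is unquestionably axiom (ii). Making the isotopy-extension decomposition into a genuine $\infty$-operadic pushout of slice categories, and then interchanging the resulting colimit with the relative tensor product, requires careful cofinality and descent arguments and is the technical crux carried out in \cite{Francis} (building on \cite{HA} and foreshadowing \cite{AFT1}). A secondary but real subtlety is upgrading the Kan extension from a mere functor to a \emph{symmetric monoidal} one, which is what permits the relative tensor product in the excision formula to be formed in the first place; this is where the operadic Day convolution of \cite{HA} is indispensable.
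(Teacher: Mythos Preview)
The paper does not prove this theorem; it is stated with attribution to Ayala--Francis and referenced to \cite[3.24]{Francis}. Immediately after the statement the paper only records the construction it will use---factorization homology as the left Kan extension of $A$ along $\Disk_n\hookrightarrow\Mfld_n$, computed pointwise as $\colim_{\Disk_{n/M}}A$---and notes (via \cite[3.2.3]{AFZ}) that this Kan extension is automatically symmetric monoidal. No argument for excision or for the uniqueness clause is given here.

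Your outline is consistent with that construction and with the actual proof in \cite{Francis}: Kan extension for existence, a cofinality/decomposition argument on $\Disk_{n/M}$ for $\otimes$-excision, and induction over handle decompositions for uniqueness. Two comments on accuracy. First, the key step for (ii) in \cite{Francis} is not quite an ``$\infty$-operadic pushout'' of slice categories but a finality statement identifying the colimit over $\Disk_{n/M}$ with the two-sided bar construction $\int_{M_1}A\otimes_{\int_{M_0\times\mathbb{R}}A}\int_{M_2}A$; the isotopy-extension intuition you describe is correct, but the formalization runs through the $\Disk_1^\fr$-algebra structure on $\int_{M_0\times\mathbb{R}}A$ and a pushforward argument. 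Second, your uniqueness sketch is morally right but slightly informal: one needs that homology theories in the sense of (i)--(ii) automatically send sequential open unions to sequential colimits, which in \cite{Francis} is deduced from excision rather than assumed. Neither point is a gap in your plan, only a place where the details are more delicate than the sketch suggests.
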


Just as the functor of singular chains is but one model for ordinary homology, factorization homology may be constructed in several equivalent ways. The construction that we will favor is as follows.

Let $A:\Disk_n\to \Ch_\mathbb{Q}$ be an $n$-disk algebra. Then factorization homology with coefficients in $A$ is the left Kan extension in the following diagram of $\infty$-categories:
$$\xymatrix{
\Disk_n\ar[rr]^A\ar[dd] &&\Ch_\mathbb{Q}\\\\
\Mfld_n \ar@{-->}[uurr]_{\int_{(-)}A}
}$$ Explicitly, it may be calculated as the colimit $$\xymatrix{\displaystyle\int_MA\simeq \colim\bigg(\Disk_{n/M}\ar[r]&\Disk_n\ar[r]^-A&\displaystyle\Ch_\mathbb{Q}\bigg).}$$

\begin{remark}
Since $\Ch_\mathbb{Q}$ admits sifted colimits and $\otimes$ distributes over them, \cite[3.2.3]{AFZ} guarantees that the left Kan extension and the symmetric monoidal left Kan extension exist and coincide.
\end{remark}

\subsection{Variant: framed manifolds} 

The category $\Disk_n$ is closely related to the classical operad $E_n$ of little $n$-cubes. To make this connection, we recall that a \emph{framing} of an $n$-manifold $M$ is a nullhomotopy of its tangent classifier $$\xymatrix{M\ar[rr]^-{TM\simeq*}&&BO(n)}.$$ With the corresponding notion of \emph{framed embedding} between framed manifolds in hand, one obtains an $\infty$-category $\Mfld_n^\fr$ of framed $n$-manifolds (see \cite[2.7]{Francis}).

\begin{definition}
A \emph{framed $n$-disk algebra} in $\Ch_\mathbb{Q}$ is a symmetric monoidal functor $A:(\Disk_n^\fr,\amalg)\to (\Ch_\mathbb{Q},\otimes)$, where $\Disk_n^\fr\subseteq\Mfld_n^\fr$ is the full subcategory spanned by framed manifolds diffeomorphic to $\amalg_k\mathbb{R}^n$ for some $k\in\mathbb{Z}_{\geq0}$.
\end{definition}

As before, the factorization homology of a framed $n$-manifold with coefficients in a framed $n$-disk algebra is defined as the left Kan extension from $\Disk_n^\fr$. Indeed, the whole theory carries over into the context of topological manifolds equipped with a microtangential $B$-structure arising from a map $B\to B\mathrm{Top}(n)$. In this paper, we will only make use of the cases $B=BO(n)$, corresponding to smooth manifolds (see \cite[2.11, 3.29]{Francis}), and $B=*$, corresponding to framed manifolds.

Now, the topological operad $E_n$ has an associated $\infty$-operad (see \cite[2.1]{HA}), and \cite[2.11]{AFT2} asserts an equivalence $$\xymatrix{\Alg_{\Disk_n^\fr}(\mathcal{C})\ar[r]^-\sim&\Alg_{E_n}(\mathcal{C})}$$ for any symmetric monoidal $\infty$-category $\mathcal{C}$. Moreover, this equivalence induces a further equivalence $$\xymatrix{\Alg_{\Disk_n}(\mathcal{C})\ar[r]^-\sim&\Alg_{E_n}(\mathcal{C})^{O(n)}.}$$ Informally, an $n$-disk algebra is an $E_n$-algebra with an action of $O(n)$ compatible with the action on $E_n$ given by rotating disks. In the language of \cite{SalvatoreWahl}, $n$-disk algebras are algebras for the \emph{semidirect product} $E_n\rtimes O(n)$.

\begin{remark}
The reader is cautioned not to confuse the framed $n$-disk algebras employed here with the ``framed $E_n$-algebras'' that occur elsewhere in the literature. These algebras carry an action of $SO(n)$ and yield homology theories for \emph{oriented} manifolds.
\end{remark}

\subsection{Free algebras} We introduce several functors that will be important for us in what follows. The reference here is \cite{AF}.

Within the $\infty$-category $\Disk_n$ there is a Kan complex with a single vertex, the object $\mathbb{R}^n$, whose endomorphisms are $\Emb(\mathbb{R}^n,\mathbb{R}^n)\simeq O(n)$, so that we may identify this Kan complex with $BO(n)$. Restricting to this subcategory defines a forgetful functor $$\xymatrix{\Alg_{\Disk_n}(\Ch_\mathbb{Q})\ar[r]&\mathrm{Fun}(BO(n),\Ch_\mathbb{Q})\ar[r]^-\sim&\Mod_{O(n)}(\Ch_\mathbb{Q})}$$ The latter symbol denotes the $\infty$-category of chain complexes equipped with an action of $C_*(O(n);\mathbb{Q})$, which we refer to simply as $O(n)$-\emph{modules}. This functor admits a left adjoint $\mathbb{F}_n$, the free $n$-disk algebra generated by an $O(n)$-module. 

Evaluation on $\mathbb{R}^n$ defines a still more forgetful functor, which we think of as associating to an algebra its underlying chain complex. The situation is summarized in the following commuting diagram of adjunctions, in which the straight arrows are right and the bent arrows left adjoints:
$$\xymatrix{
\Alg_{\Disk_n}(\Ch_\mathbb{Q})\ar[rr]\ar[ddrr]&&\Mod_{O(n)}(\Ch_\mathbb{Q})\ar@/_1pc/[ll]_{\mathbb{F}_n}\ar[dd]\\\\
&&\Ch_\mathbb{Q}\ar@/_1pc/[uu]_{O(n)\otimes(-)}\ar@/^1pc/[uull]
}$$

In particular, for a chain complex $V$, the free $n$-disk algebra on $V$ is naturally equivalent to $\mathbb{F}_n(O(n)\otimes V).$ More generally, there is the following description.

\begin{proposition}\label{free identification}
There is a natural equivalence $$\xymatrix{\mathbb{F}_n(K)\ar[r]^-\sim&\displaystyle\bigoplus_{k\geq0}\bigg(\Emb(\amalg_k\mathbb{R}^n,-)\bigotimes_{\Sigma_k\ltimes O(n)^k}K^{\otimes k}\bigg),}$$ where $K$ is an $O(n)$-module.
\end{proposition}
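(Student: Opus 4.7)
The plan is to verify that the right-hand side, call it $F(K)$, satisfies the universal property defining $\mathbb{F}_n(K)$ as the left adjoint to the forgetful functor $\Alg_{\Disk_n}(\Ch_\mathbb{Q}) \to \Mod_{O(n)}(\Ch_\mathbb{Q})$.

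First I would establish that $F(K)$, restricted to $\Disk_n$, is a symmetric monoidal functor to $\Ch_\mathbb{Q}$ and hence an $n$-disk algebra. The key input is the natural decomposition
$$\Emb(\amalg_k \mathbb{R}^n, M_1 \amalg M_2) \simeq \coprod_{i+j=k} \Sigma_k \times_{\Sigma_i \times \Sigma_j} \bigl(\Emb(\amalg_i \mathbb{R}^n, M_1) \times \Emb(\amalg_j \mathbb{R}^n, M_2)\bigr),$$
from which symmetric monoidality follows after tensoring with $K^{\otimes k}$ and passing to $\Sigma_k \ltimes O(n)^k$-coinvariants: the terms on the right regroup into $F(K)(M_1) \otimes F(K)(M_2)$. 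The unit is the $k=1$ summand $\Emb(\mathbb{R}^n, \mathbb{R}^n) \otimes_{O(n)} K \simeq K$, which produces a canonical $O(n)$-equivariant map $\eta : K \to F(K)(\mathbb{R}^n)$.

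Next, to check the universal property, I take any $n$-disk algebra $A$ equipped with an $O(n)$-equivariant map $\phi : K \to A(\mathbb{R}^n)$. The structural multiplications of $A$ assemble into $\Sigma_k \ltimes O(n)^k$-equivariant maps $\Emb(\amalg_k \mathbb{R}^n, M) \otimes A(\mathbb{R}^n)^{\otimes k} \to A(M)$; precomposing with $\phi^{\otimes k}$ and descending to coinvariants produces an extension $F(K) \to A$ of $n$-disk algebras. Conversely, any such extension restricts on the $k=1$ summand to a map of $O(n)$-modules, and mutual inverseness is a direct check.

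The main obstacle is handling the $\Sigma_k \ltimes O(n)^k$-coinvariants coherently in the $\infty$-categorical setting, where they must be interpreted as homotopy coinvariants and all equivariant structures tracked up to higher coherence. In practice this is the $\infty$-categorical incarnation of the classical Schur-functor formula for free algebras over an operad, and I would appeal to the general machinery of \cite{AF}, recognizing the $\infty$-operad associated to $\Disk_n$ as the one whose space of $k$-ary operations is $\Emb(\amalg_k \mathbb{R}^n, \mathbb{R}^n)$ with its natural $(\Sigma_k \ltimes O(n)^k) \times O(n)$-action.
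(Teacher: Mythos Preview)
Your approach is correct in outline and is the ``direct'' route: show that the right-hand side $F(K)$ is an $n$-disk algebra equipped with an $O(n)$-module map $\eta:K\to F(K)(\mathbb{R}^n)$, and then verify that $(F(K),\eta)$ is initial among such pairs. This is the standard Schur-functor argument for free algebras over an operad, adapted to the colored setting where the automorphisms of the single color are $O(n)$. You correctly flag that the real work lies in the $\infty$-categorical coherences, and your appeal to the operadic machinery is reasonable though somewhat imprecise.

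The paper takes a different and rather more economical route. Rather than verifying the universal property of $F(K)$ directly, it first uses the universal property of $\mathbb{F}_n(K)$ (which exists abstractly as a left adjoint) to produce a comparison map $\mathbb{F}_n(K)\to F(K)$, and then checks that this map is an equivalence. The check proceeds by reduction: when $K=O(n)\otimes V$ is a free $O(n)$-module, $\mathbb{F}_n(K)$ coincides with the free $n$-disk algebra on the plain chain complex $V$, whose description in terms of embedding spaces is already known, and one verifies the formula by a short manipulation. The general case follows because any $O(n)$-module is a split geometric realization of free ones, and both sides commute with such realizations (using monadicity to compute them in $\Ch_\mathbb{Q}$).

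The advantage of the paper's reduction is that it sidesteps the coherence bookkeeping you flagged: rather than assembling the universal property of $F(K)$ from scratch in the $\infty$-categorical setting, it leverages the already-established free-algebra formula for chain complexes together with formal colimit arguments. Your direct approach is conceptually cleaner and more self-contained, but demands that the operadic free-algebra machinery be invoked or reproved in full.
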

\begin{proof}
The map is supplied by the universal property of the free algebra. In the case $K=O(n)\otimes V$, it is an equivalence, since $\mathbb{F}_n(K)$ is now the free $n$-disk algebra on the chain complex $V$, so that \begin{align*}\mathbb{F}_n(K)&\simeq\bigoplus_{k\geq0}\bigg(\Emb(\amalg_k\mathbb{R}^n,-)\bigotimes_{\Sigma_k}V^{\otimes k}\bigg)\\
&\cong\bigoplus_{k\geq0}\bigg(\Emb(\amalg_k\mathbb{R}^n,-)\bigotimes_{\Sigma_k\ltimes O(n)^k}(O(n)^{k}\otimes V^{\otimes k})\bigg)\\
&\cong\bigoplus_{k\geq0}\bigg(\Emb(\amalg_k\mathbb{R}^n,-)\bigotimes_{\Sigma_k\ltimes O(n)^k}K^{\otimes k}\bigg).
\end{align*} Since a general $K$ may be expressed as a split geometric realization of free $O(n)$-modules, and since $\mathbb{F}_n$, as a left adjoint, preserves geometric realizations, it suffices to show that the right hand side shares this property. But both $\Mod_{O(n)}(\Ch_\mathbb{Q})$ and $\Alg_{\Disk_n}(\Ch_\mathbb{Q})$ are monadic over $\Ch_\mathbb{Q}$, so on both sides the geometric realization is computed in $\Ch_\mathbb{Q}$, and the right hand side clearly preserves colimits in chain complexes.
\end{proof}

In the framed case, $\Emb^\fr(\mathbb{R}^n,\mathbb{R}^n)$ is contractible, so there is only the one forgetful functor $$\xymatrix{\Alg_{\Disk_n^\fr}(\Ch_\mathbb{Q})\ar[r]& \Ch_\mathbb{Q},
}$$ whose left adjoint, the free framed $n$-disk algebra functor, is denoted $\mathbb{F}_n^\fr$.

By restriction along the natural inclusion $\Disk_n^\fr\to \Disk_n$, any $n$-disk algebra is in particular a framed $n$-disk algebra, and there is an equivalence of $\Disk^\fr_n$-algebras $$\mathbb{F}_n(V)\simeq \mathbb{F}^\fr_n(V),$$ where $V$ is a chain complex considered as a trivial $O(n)$-module.

\section{Calculations}\label{calculations}

\subsection{Frame bundles}\label{free algebras}

The object of this section is twofold. First, we compute the factorization homology of the free $n$-disk algebra generated by an $O(n)$-module $K$. Second, for suitable $K$, we interpret this calculation in terms of the homology of configuration spaces.

For a manifold $M$, let $\Fr_M\to M$ denote the corresponding principal $O(n)$-bundle. Since $\Conf_k(M)$ is an open submanifold of $M^k$, its structure group is canonically reducible to $O(n)^k$, and we denote the corresponding principal $O(n)^k$-bundle by $\Conf^{\mathrm{fr}}_k(M)$.

\begin{proposition}\label{free calculation}
There is a natural equivalence $$\xymatrix{\displaystyle \int_M\mathbb{F}_n(K)\ar[r]^-\sim&\displaystyle \bigoplus_{k\geq0}\bigg(\Conf^\mathrm{fr}_k(M)\bigotimes_{\Sigma_k\ltimes O(n)^k}K^{\otimes k}\bigg),}$$ where $K$ is an $O(n)$-module.
\end{proposition}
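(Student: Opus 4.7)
The plan is to compute the defining colimit for $\int_M \mathbb{F}_n(K)$ by plugging in the formula from Proposition \ref{free identification}, then identifying the resulting colimit in spaces with $\Conf_k^{\fr}(M)$. Concretely, since factorization homology is the left Kan extension
\[
\int_M \mathbb{F}_n(K) \;\simeq\; \colim\Bigl(\Disk_{n/M}\to \Disk_n \xrightarrow{\mathbb{F}_n(K)} \Ch_\mathbb{Q}\Bigr),
\]
I would first apply Proposition \ref{free identification} to rewrite the integrand, on an object $\amalg_j\mathbb{R}^n \to M$, as
\[
\bigoplus_{k\geq0}\Bigl(\Emb(\amalg_k\mathbb{R}^n,\amalg_j\mathbb{R}^n)\bigotimes_{\Sigma_k\ltimes O(n)^k} K^{\otimes k}\Bigr).
\]
Direct sums and the coinvariants-tensor $(-)\otimes_{\Sigma_k\ltimes O(n)^k} K^{\otimes k}$ are each colimits and so commute with the outer colimit. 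This reduces the problem to establishing, for each $k$, a natural $\Sigma_k\ltimes O(n)^k$-equivariant equivalence
\[
\colim_{(\amalg_j\mathbb{R}^n\to M)\,\in\,\Disk_{n/M}} \Emb(\amalg_k\mathbb{R}^n,\amalg_j\mathbb{R}^n)\;\simeq\; \Conf_k^{\fr}(M).
\]

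For this identification, I would proceed in two steps. First, recognize the left-hand side as $\Emb(\amalg_k\mathbb{R}^n,M)$: the presheaf $\Emb(\amalg_k\mathbb{R}^n,-)$ on $\Mfld_n$ is itself the left Kan extension from $\Disk_n$ of its restriction, essentially because an embedding of $k$ disks into $M$ factors through a disjoint union of disks in $M$ (any such embedding is itself an object of $\Disk_{n/M}$), and the $\infty$-category of factorizations has a contractible, cofinal subcategory. In the formalism of the paper this is the statement that the tangential-structure versions of $\Emb(\amalg_k\mathbb{R}^n,-)$ are the values of factorization homology applied to the free algebra on a point, and in any case it is a Weiss-sheaf type observation that I would invoke from \cite{AF} or \cite{AFT1}.

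Second, I would compare $\Emb(\amalg_k\mathbb{R}^n,M)$ with $\Conf_k^{\fr}(M)$. The derivative-at-the-origins map
\[
\Emb(\amalg_k\mathbb{R}^n,M)\;\longrightarrow\;\Conf_k^{\fr}(M)
\]
sends an embedding $e$ to the tuple $\bigl(e(0_1),\ldots,e(0_k);\,d_0 e_1,\ldots,d_0 e_k\bigr)$. This is a fiber bundle whose fibers are products of $\Emb(\mathbb{R}^n,\mathbb{R}^n)_{\mathrm{fix}\,0,\,d_0=\mathrm{id}}$, which are contractible by a linear-scaling homotopy. The map is manifestly $\Sigma_k\ltimes O(n)^k$-equivariant, giving the required equivalence.

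The main obstacle I foresee is the first of the two steps above: verifying rigorously that $\Emb(\amalg_k\mathbb{R}^n,-)$ is the left Kan extension of its restriction to $\Disk_n$, equivariantly for $\Sigma_k\ltimes O(n)^k$. Everything else — the formal manipulations with colimits and the derivative-at-origin argument — is routine once that colimit identification is in place. I would expect this to be invoked as a known fact about embedding functors rather than proved from scratch.
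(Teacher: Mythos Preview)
Your proposal is correct and follows essentially the same approach as the paper: plug in Proposition~\ref{free identification}, commute colimits past $\bigoplus$ and coinvariants, identify $\colim_{\Disk_{n/M}}\Emb(\amalg_k\mathbb{R}^n,-)\simeq\Emb(\amalg_k\mathbb{R}^n,M)$ as a known fact (the paper cites \cite[p.~726]{HA}), and then use the derivative-at-origins equivalence $\Emb(\amalg_k\mathbb{R}^n,M)\simeq\Conf_k^{\fr}(M)$. The equivariance you worry about is automatic from naturality, so there is no real obstacle.
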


\begin{proof}
The natural map $$\xymatrix{\displaystyle\colim_{\Disk_{n/M}}\bigg(\Emb(\amalg_k\mathbb{R}^n,-)\bigg)\ar[r]^-\sim&\Emb(\amalg_k\mathbb{R}^n,M)}$$ is an equivalence by \cite[p. 726]{HA}, so we have \begin{align*}\int_M\mathbb{F}_n(K)&\simeq\colim_{\Disk_{n/M}}\bigg(\bigoplus_{k\geq0}\bigg(\Emb(\amalg_k\mathbb{R}^n,-)\bigotimes_{\Sigma_k\ltimes O(n)^k}K^{\otimes k}\bigg)\bigg)\\
&\simeq \bigoplus_{k\geq0}\bigg(\colim_{\Disk_{n/M}}\bigg(\Emb(\amalg_k\mathbb{R}^n,-)\bigg)\bigotimes_{\Sigma_k\ltimes O(n)^k}K^{\otimes k}\bigg)\\
&\simeq\bigoplus_{k\geq0}\bigg(\Emb(\amalg_k\mathbb{R}^n,M)\bigotimes_{\Sigma_k\ltimes O(n)^k}K^{\otimes k}\bigg).
\end{align*} 

\noindent To conclude, we note that evaluation at the origin defines a projection $\Emb(\amalg_k\mathbb{R}^n,M)\to \Conf_k(M),$ and the natural map derivative map $\Emb(\amalg_k\mathbb{R}^n,M)\to\Conf^\mathrm{fr}_k(M)$ covering the identity is an equivalence of $O(n)^k$-spaces over $\Conf_k(M)$.
\end{proof}

\begin{remark}
This proposition is a special case of a calculation carried out in the more general context of \emph{zero-pointed manifolds} in \cite[2.4.1]{AF}. We have included this simplified argument for the reader's convenience.
\end{remark}

It will be important in what follows to be able to identify the summand of this object corresponding to a particular choice of $k$.

\begin{definition}\label{cardinality grading}
The \emph{cardinality grading} of the functor $\int_M\mathbb{F}_n(K)$ is the grading corresponding to the direct sum decomposition of Proposition \ref{free calculation}.
\end{definition}

Note that this grading corresponds to the grading induced on the colimit by the cardinality grading of the functor $\mathbb{F}_n(K)$.

We will be most interested in this calculation for particularly simple choices of $O(n)$-module $K$.

\begin{corollary}\label{homology}
There is a natural equivalence $$\xymatrix{\displaystyle\int_M\mathbb{F}_n(\mathbb{Q})\ar[r]^-\sim& \displaystyle\bigoplus_{k\geq0}C_*(B_k(M);\mathbb{Q}).}$$
\end{corollary}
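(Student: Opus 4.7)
The plan is to specialize Proposition \ref{free calculation} to $K=\mathbb{Q}$, viewed as the trivial $O(n)$-module concentrated in degree zero, and then identify the resulting homotopy coinvariants with the chains on the unordered configuration space.

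First I would apply Proposition \ref{free calculation} directly to obtain
$$\int_M \mathbb{F}_n(\mathbb{Q}) \;\simeq\; \bigoplus_{k\geq 0}\bigg(\Conf^{\mathrm{fr}}_k(M)\bigotimes_{\Sigma_k\ltimes O(n)^k}\mathbb{Q}^{\otimes k}\bigg).$$
Since the action of $O(n)$ on $\mathbb{Q}$ is trivial, so too is the action of $\Sigma_k\ltimes O(n)^k$ on $\mathbb{Q}^{\otimes k}\cong\mathbb{Q}$. Consequently, by the convention on tensors of spaces with chain complexes, each summand reduces to $C_*\bigl((\Conf^{\mathrm{fr}}_k(M))_{\Sigma_k\ltimes O(n)^k};\mathbb{Q}\bigr)$, where the coinvariants are understood in the homotopical sense (Convention~8).

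Next I would identify these homotopy coinvariants with the chains on $B_k(M)$. The key point is that $\Conf^{\mathrm{fr}}_k(M)\to \Conf_k(M)$ is a principal $O(n)^k$-bundle, so the $O(n)^k$-action is free; hence the homotopy quotient by $O(n)^k$ agrees with the strict quotient $\Conf_k(M)$. Taking the remaining $\Sigma_k$-coinvariants — again freely, since $\Sigma_k$ acts freely on $\Conf_k(M)$ — yields $B_k(M)$. Assembling the summands gives the claimed equivalence, which is natural in $M$ because each step is.

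The only real subtlety is the reduction from the semidirect-product homotopy coinvariants to the strict quotient $B_k(M)$; this is a standard fact about free actions, but I would state it explicitly (e.g., as an equivalence of groupoids $\Conf^{\mathrm{fr}}_k(M)/\!\!/(\Sigma_k\ltimes O(n)^k)\simeq B_k(M)$) to keep the argument transparent. Beyond that, the proof is essentially a bookkeeping exercise extracted from Proposition \ref{free calculation}.
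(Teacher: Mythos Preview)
Your proposal is correct and is precisely the intended argument: the paper states this corollary without proof, treating it as immediate from Proposition~\ref{free calculation} specialized to the trivial $O(n)$-module $K=\mathbb{Q}$. Your explicit identification of the homotopy coinvariants with $C_*(B_k(M);\mathbb{Q})$ via freeness of the $\Sigma_k\ltimes O(n)^k$-action on $\Conf^{\mathrm{fr}}_k(M)$ simply spells out what the paper leaves implicit.
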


Proposition \ref{free calculation} can also be used to study the twisted homology of $B_k(M)$. To pursue this direction, we must first identify the orientation cover $\widetilde{B_k(M)}$ of $B_k(M)$. For this we note that the orientation cover $$\widetilde{\Conf_k(M)}\to \Conf_k(M)\to B_k(M)$$ has structure group $\Sigma_k\times C_2$ when considered as a bundle over $B_k(M)$; that the automorphism corresponding to $-1\in C_2$ reverses orientation; and that the automorphism corresponding to $\tau\in \Sigma_k$ reverses orientation if $\sgn(\tau)=-1$ and $n$ is odd and preserves orientation otherwise. Therefore, the action of the subgroup $$H:=\{(\tau,\sgn(\tau)^n)\mid\tau\in \Sigma_k\}< \Sigma_k\times C_2$$ is orientation preserving, and we deduce the following:

\begin{proposition} $\widetilde{B_k(M)}\cong \widetilde{\Conf_k(M)}_H$ as covers of $B_k(M)$.
\end{proposition}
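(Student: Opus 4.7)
The plan is to realize the orientation double cover of $B_k(M)$ as an explicit quotient of the Galois cover $\widetilde{\Conf_k(M)} \to B_k(M)$. The underlying principle is: given a regular cover $Y \to X$ of a manifold $X$ with deck group $G$, if $Y$ itself is orientable, then the orientation character $w_1 \colon \pi_1(X) \to C_2$ vanishes on $\pi_1(Y)$ and hence factors through the quotient $\pi_1(X) \to G$, yielding a homomorphism $\chi \colon G \to C_2$; the orientation double cover is then recovered as $\widetilde{X} \cong Y/\ker(\chi)$, directly from the Galois correspondence between subgroups of $\pi_1(X)$ and connected covers of $X$.

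The first step is to verify the hypotheses in our setting. Functoriality of the orientation cover ensures that the $\Sigma_k$-action on $\Conf_k(M)$ lifts canonically to $\widetilde{\Conf_k(M)}$ and commutes with the deck $C_2$-action, so that the composite $\widetilde{\Conf_k(M)} \to B_k(M)$ is a regular cover with deck group $\Sigma_k \times C_2$. Orientability of $\widetilde{\Conf_k(M)}$ is the defining property of the orientation cover of $\Conf_k(M)$.

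The next step is to compute the character $\chi \colon \Sigma_k \times C_2 \to C_2$, but this is precisely the content of the paragraph preceding the proposition: the element $(1,-1)$ reverses orientation by construction of $\widetilde{\Conf_k(M)}$, while $(\tau, 1)$ reverses orientation iff $\sgn(\tau) = -1$ and $n$ is odd. Multiplicativity then forces
$$\chi(\tau, \epsilon) = \epsilon \cdot \sgn(\tau)^n,$$
whose kernel is exactly the subgroup $H = \{(\tau, \sgn(\tau)^n) : \tau \in \Sigma_k\}$ appearing in the statement. Applying the general principle yields $\widetilde{B_k(M)} \cong \widetilde{\Conf_k(M)}/H = \widetilde{\Conf_k(M)}_H$ as covers of $B_k(M)$.

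The only mildly delicate ingredient is the general principle itself, since it is not explicitly recorded in the paper. I would either cite it as standard or dispatch it in a single line: under the Galois correspondence, the connected double cover of $X$ classified by $w_1$ corresponds to the subgroup $p^{-1}(\ker \chi) \subset \pi_1(X)$, where $p \colon \pi_1(X) \to G$ classifies $Y$, and this subgroup corresponds in turn to the cover $Y/\ker(\chi) \to X$. Every other step is pure bookkeeping.
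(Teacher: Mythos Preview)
Your proposal is correct and is precisely the argument the paper has in mind: the paper records the same facts about which elements of $\Sigma_k\times C_2$ preserve or reverse orientation, observes that $H$ is the orientation-preserving subgroup, and then simply writes ``we deduce the following'' in lieu of a proof. You have supplied the covering-space bookkeeping that the paper leaves implicit, and your identification of $H$ as the kernel of the induced character $\chi$ is exactly the content of the paragraph preceding the proposition.
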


For a chain complex $V$, let $V^\mathrm{sgn}$ denote the sign representation of $C_2$ on $V$ and $V^{\det}$ the $O(n)$-module obtained from the latter by restriction along the determinant $O(n)\to C_2$. Recall that, for an $n$-manifold $N$, the homology of $N$ twisted by the orientation character may be computed as the homology of the complex $$C_*(N;\mathbb{Q}^w):=\widetilde N\bigotimes_{C_2}\mathbb{Q}^\mathrm{sgn}\cong \Fr_N\bigotimes_{O(n)}\mathbb{Q}^{\det}.$$

\begin{proposition}\label{twisted homology} Let $M$ be an $n$-manifold.
\begin{enumerate}
\item If $n$ is even, there is a natural equivalence $$\xymatrix{\displaystyle\int_M\mathbb{F}_n(\mathbb{Q}^{\det})\ar[r]^-\sim&\displaystyle \bigoplus_{k\geq0}C_*(B_k(M);\mathbb{Q}^w).}$$
\item If $n$ is odd, there is a natural equivalence$$\xymatrix{\displaystyle\int_M\mathbb{F}_n(\mathbb{Q}^{\det}[1])\ar[r]^-\sim&\displaystyle \bigoplus_{k\geq0}C_*(B_k(M);\mathbb{Q}^w)[k].}$$ 
\end{enumerate}
\end{proposition}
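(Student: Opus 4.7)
The plan is to unfold both sides of the claimed equivalence via Proposition~\ref{free calculation} and the preceding identification of $\widetilde{B_k(M)}$, then match them by analyzing the $\Sigma_k \ltimes O(n)^k$-action in two stages. By Proposition~\ref{free calculation}, the cardinality-$k$ summand of the left-hand side is
$$\Conf^{\fr}_k(M) \otimes_{\Sigma_k \ltimes O(n)^k} V^{\otimes k},$$
where $V = \mathbb{Q}^{\det}$ when $n$ is even and $V = \mathbb{Q}^{\det}[1]$ when $n$ is odd. First take $O(n)^k$-coinvariants: since $\Conf^{\fr}_k(M) \to \Conf_k(M)$ is the principal $O(n)^k$-bundle underlying the tangent bundle of the open submanifold $\Conf_k(M) \subset M^k$, and since $(\mathbb{Q}^{\det})^{\otimes k}$ represents the orientation character of that tangent bundle, this step produces $C_*(\Conf_k(M); \mathbb{Q}^{w})$, shifted by $[k]$ in the odd case.

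Next I would take $\Sigma_k$-coinvariants. The key observation is that $\Sigma_k$ acts on $V^{\otimes k}$ by permuting one-dimensional tensor factors: when $V$ is concentrated in degree $0$ (even case) the action is trivial, while when $V$ is in degree $1$ (odd case) Koszul signs produce an action by the sign representation. Thus the left-hand side equals $C_*(\Conf_k(M); \mathbb{Q}^w)_{\Sigma_k}$ in the even case, and $\bigl(C_*(\Conf_k(M); \mathbb{Q}^w) \otimes \mathbb{Q}^{\sgn}\bigr)_{\Sigma_k}[k]$ in the odd case.

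To match with the right-hand side, substitute the preceding identification $\widetilde{B_k(M)} \cong \widetilde{\Conf_k(M)}_H$ into $C_*(B_k(M); \mathbb{Q}^w) = \widetilde{B_k(M)} \otimes_{C_2} \mathbb{Q}^{\sgn}$. The quotient $\Sigma_k \times C_2 \to (\Sigma_k \times C_2)/H \cong C_2$ sends $(\tau, \epsilon) \mapsto \epsilon \cdot \sgn(\tau)^n$, and separating the coinvariants yields
$$C_*(B_k(M); \mathbb{Q}^w) \simeq \bigl(C_*(\Conf_k(M); \mathbb{Q}^w) \otimes \mathbb{Q}^{\sgn^n}\bigr)_{\Sigma_k}.$$
This is the untwisted $\Sigma_k$-coinvariants when $n$ is even and the $\sgn$-twisted $\Sigma_k$-coinvariants when $n$ is odd, exactly matching what came out of the left-hand side (up to the shift $[k]$ in the odd case).

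The main bookkeeping obstacle is tracking the two independent sources of sign that appear in the odd case: the Koszul sign from permuting copies of a degree-$1$ generator, and the orientation-reversal character $\sgn(\tau)^n$ encoded in the subgroup $H$. Each equals $\sgn$ precisely when $n$ is odd, and this coincidence is exactly what allows the two sides to agree without a residual sign twist; it is also what forces the introduction of the shift $[1]$ in the generator on the left and the shift $[k]$ in the coefficients on the right.
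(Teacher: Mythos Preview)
Your proof is correct and follows essentially the same route as the paper's: both start from Proposition~\ref{free calculation}, decompose the $\Sigma_k \ltimes O(n)^k$-coinvariants in stages, and invoke the identification $\widetilde{B_k(M)} \cong \widetilde{\Conf_k(M)}_H$ to match the two sides. The paper organizes the intermediate steps slightly differently---passing through $\Sigma_k \ltimes O(nk)$ and the double cover $\widetilde{\Conf_k(M)}$ rather than first taking $O(n)^k$-coinvariants to reach $C_*(\Conf_k(M);\mathbb{Q}^w)$---but the sign bookkeeping is the same.
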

\begin{proof} \begin{enumerate}
\item
We have that \begin{align*}
\Conf^\mathrm{fr}_k(M)\bigotimes_{\Sigma_k\ltimes O(n)^k}(\mathbb{Q}^{\det})^{\otimes k}&\cong \Conf^\mathrm{fr}_k(M)\bigotimes_{\Sigma_k\ltimes O(nk)}\mathbb{Q}^{\det}
\\
&\cong\widetilde{\Conf_k(M)}\bigotimes_{\Sigma_k\times C_2}\mathbb{Q}^\mathrm{sgn}\\
&\cong\widetilde{\Conf_k(M)}_{\Sigma_k}\bigotimes_{C_2}\mathbb{Q}^\mathrm{sgn}\\ 
&\cong\widetilde{B_k(M)}\bigotimes_{C_2}\mathbb{Q}^\mathrm{sgn}, 
\end{align*} where we have used the commuting of the diagram $$\xymatrix{
O(n)^k\ar[r]\ar[d]_{\det^k} &O(nk)\ar[d]^{\det}\\
C_2^k\ar[r]^{\text{multiply}} &C_2.
}$$ and the fact that $H=\Sigma_k\times \{1\}$ when $n$ is even. The claim follows after summing over $k$ and applying Proposition \ref{free calculation}.
\item Similarly, we have that \begin{align*}
\Conf^\mathrm{fr}_k(M)\bigotimes_{\Sigma_k\ltimes O(n)^k}(\mathbb{Q}^{\det}[1])^{\otimes k}&\cong \Conf^\mathrm{fr}_k(M)\bigotimes_{\Sigma_k\ltimes O(nk)}(\mathbb{Q}^{\det}\otimes\mathbb{Q}[1]^{\otimes k})
\\
&\cong\widetilde{\Conf_k(M)}\bigotimes_{\Sigma_k\times C_2}(\mathbb{Q}^\mathrm{sgn}\otimes\mathbb{Q}[1]^{\otimes k})\\
&\cong\widetilde{\Conf_k(M)}_{H}\bigotimes_{C_2}\mathbb{Q}^\mathrm{sgn}[k]\\ 
&\cong\widetilde{B_k(M)}\bigotimes_{C_2}\mathbb{Q}^\mathrm{sgn}[k], 
\end{align*} where we have used that $\mathbb{Q}^\mathrm{sgn}\otimes\mathbb{Q}[1]^{\otimes k}$ is a trivial $H$-module and that $[\Sigma_k\times C_2:H]=2$.
\end{enumerate}
\end{proof}

\subsection{Commutative algebras}\label{commutative algebras}

We now consider a calculation of factorization homology in a certain degenerate case, which is a slight generalization of that considered in \cite[5.1]{Francis}. We will make use of this calculation in the next section.

Restriction of embeddings defines a map $\Emb(\amalg_k\mathbb{R}^n,\mathbb{R}^n)\to \prod_k\Emb(\mathbb{R}^n,\mathbb{R}^n)\simeq\prod_kO(n)$, which assemble to form a symmetric monoidal functor $$\xymatrix{\pi:\Disk_n\ar[r]&BO(n)^\amalg,}$$ where $BO(n)^\amalg$ is the $\infty$-category obtained as the nerve of the topological category with objects natural numbers and morphism spaces given by $$\mathrm{Map}_{BO(n)^\amalg}(r,s)=\coprod_{f:\langle r\rangle\to \langle s\rangle}\prod_{i=1}^sO(n)^{f^{-1}(i)},$$ which is symmetric monoidal under addition. For more on this and related $\infty$-categories, the reader may consult \cite[2.4.3]{HA}. For us, the relevance of this object is the following consequence of \cite[2.4.3.18]{HA}:

\begin{theorem}[(Lurie)]
There is an equivalence $$\xymatrix{
\mathrm{Fun}^\otimes(BO(n)^\amalg,\Ch_{\mathbb{Q}})\ar[r]^-\sim&\Mod_{O(n)}(\Alg_{\mathrm{Com}}(\Ch_\mathbb{Q})).
}$$
\end{theorem}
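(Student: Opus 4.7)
The plan is to deduce the statement directly from Lurie's general classification of symmetric monoidal functors out of a symmetric monoidal envelope \cite[2.4.3.18]{HA}, which produces, for any $\infty$-operad $\mathcal{O}^\otimes$ with envelope $\mathrm{Env}(\mathcal{O})^\otimes$ and any symmetric monoidal $\infty$-category $\mathcal{C}^\otimes$, a natural equivalence
$$\mathrm{Fun}^\otimes(\mathrm{Env}(\mathcal{O})^\otimes,\mathcal{C}^\otimes)\simeq \Alg_\mathcal{O}(\mathcal{C}).$$
The task therefore reduces to two identifications: that $BO(n)^\amalg$ is such an envelope, and that the associated $\infty$-operad has the asserted algebras.

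First I would exhibit $BO(n)^\amalg$ as $\mathrm{Env}(\mathcal{O})^\otimes$ for a single-colored $\infty$-operad $\mathcal{O}^\otimes$. Single-coloredness is forced by the observation that every object of $BO(n)^\amalg$ is a monoidal sum of copies of the object $1$, and by inspection of the mapping-space formula one sees that the space of operations with $r$ inputs and one output is $O(n)^r$ with the evident $\Sigma_r$-action by permutation of factors, and that composition is given by multiplication in $O(n)$ together with composition of the underlying maps of pointed finite sets. In other words, $\mathcal{O}^\otimes$ is the wreath-product operad combining the commutative operad with the topological group $O(n)$; its envelope has objects natural numbers and morphisms given exactly by the coproduct-and-product formula in the statement, matching the construction of envelopes recalled in \cite[2.2.4]{HA}.

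Second I would unpack the $\infty$-category $\Alg_\mathcal{O}(\Ch_\mathbb{Q})$. The subspace of unary operations of $\mathcal{O}^\otimes$ is $O(n)$, so the underlying object of an $\mathcal{O}$-algebra is an $O(n)$-module. The binary operations based at the identity of $O(n)^2$ pick out a commutative multiplication compatible with the $\Sigma_2$-symmetry, so that the underlying object is moreover a commutative algebra; the compatibility of unary and binary operations encoded in the operadic structure forces the $O(n)$-action to be through commutative algebra maps. Combining these identifications yields the equivalence $\Alg_\mathcal{O}(\Ch_\mathbb{Q})\simeq \Mod_{O(n)}(\Alg_{\mathrm{Com}}(\Ch_\mathbb{Q}))$ claimed.

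The main obstacle is the verification in the first step: matching the explicit combinatorial description of $BO(n)^\amalg$ supplied in the theorem with the envelope construction of \cite[2.2.4]{HA}, in particular tracking the coherences implicit in the symmetric monoidal structure on $\mathcal{O}^\otimes$ coming from the group structure of $O(n)$. This is essentially bookkeeping, but it is where the substantive content of the theorem resides; once it is in place the rest is a formal consequence of \cite[2.4.3.18]{HA}.
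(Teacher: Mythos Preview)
Your proposal is correct and follows exactly the route the paper takes: the paper does not give a proof at all but simply records the statement as a consequence of \cite[2.4.3.18]{HA}, and your argument is precisely an unpacking of that citation, identifying $BO(n)^\amalg$ as the symmetric monoidal envelope of the appropriate single-colored operad and then reading off its algebras. Your added detail on why $\Alg_\mathcal{O}(\Ch_\mathbb{Q})\simeq\Mod_{O(n)}(\Alg_{\mathrm{Com}}(\Ch_\mathbb{Q}))$ is helpful and goes beyond what the paper supplies.
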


This result motivates our next definition.

\begin{definition}
A \emph{commutative refinement} of an $n$-disk algebra $A$ is a factorization $$\xymatrix{
\Disk_n\ar[d]_-\pi\ar[r]^-A&\Ch_\mathbb{Q}\\
BO(n)^\amalg\ar@{-->}[ur]
}$$ through a symmetric monoidal functor $BO(n)^\amalg\to\Ch_\mathbb{Q}$.
\end{definition}

By the previous theorem, a commutative refinement endows the underlying object of $A$ with the structure of a commutative algebra for which the $n$-disk algebra structure maps are homomorphisms. More formally, we obtain a factorization $$\xymatrix{
\Disk_n\ar[d]_-{A_\mathrm{Com}}\ar[r]^-A&\Ch_\mathbb{Q}\\
\Alg_\mathrm{Com}(\Ch_\mathbb{Q})\ar[ur]
}$$ of $A$ through the forgetful functor.

\begin{example}
By the K\"{u}nneth theorem, the functor $H:\Ch_\mathbb{Q}\to \Ch_\mathbb{Q}$ is symmetric monoidal, whence the homology of an $n$-disk algebra is canonically an $n$-disk algebra. Since $H$ factors through the discrete $\infty$-category of graded vector spaces, we have a symmetric monoidal factorization $$\xymatrix{
\Disk_n\ar@/_2.5pc/[dd]_-{\pi_0}\ar[d]_-\pi\ar[r]^-{H(A)}&\Ch_\mathbb{Q}\\
BO(n)^\amalg\ar[d]\\
BC_2^\amalg\ar@{-->}[uur]
}$$ through the homotopy category of $\Disk_n$, so that $H(A)$ is canonically commutative.
\end{example}

\begin{definition}
Let $X$ be a topological space and $B$ a commutative algebra. The \emph{tensor} of $X$ and $B$ is the colimit $$\xymatrix{\displaystyle X\otimes B=\colim\bigg(X\ar[r]&\mathrm{pt} \ar[r]^-B&\Alg_\mathrm{Com}(\Ch_\mathbb{Q})\displaystyle\bigg)}$$ of the constant functor from $X$, viewed as an $\infty$-groupoid, with value $B$.
\end{definition}

\begin{remark}
When $X=S^1$, this construction has the homotopy type of the Hochschild chains of $A$. In general, one recovers Pirashvili's higher Hochschild homology.
\end{remark}

Let $\Disk_{n/M}^{1}$ denote the full subcategory of $\Disk_{n/M}$ spanned by those arrows $\amalg_k\mathbb{R}^n\to M$ with $k=1$. 

\begin{proposition} Suppose that $A$ admits a commutative refinement. There is a natural equivalence 
$$\xymatrix{\displaystyle\Fr_M\bigotimes_{O(n)}A\simeq\colim\bigg(\Disk_{n/M}^1\ar[r]&\Disk_n\ar[r]^-{A_\mathrm{Com}}&\Alg_\mathrm{Com}(\Ch_\mathbb{Q})\displaystyle\bigg).}$$
\end{proposition}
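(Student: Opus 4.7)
The plan is to recognize $\Disk_{n/M}^1$ as an $\infty$-groupoid whose underlying space is $M$ and whose projection to $BO(n)$ is the tangent classifier; the colimit will then evaluate as the Borel-construction tensor $\Fr_M\otimes_{O(n)}A$. The commutative refinement is what allows one to form this colimit in $\Alg_\mathrm{Com}(\Ch_\mathbb{Q})$ rather than merely in $\Ch_\mathbb{Q}$.

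First I would check that $\Disk_{n/M}^1$ is an $\infty$-groupoid. The mapping space from an embedding $\iota_1\colon\mathbb{R}^n\hookrightarrow M$ to $\iota_2\colon\mathbb{R}^n\hookrightarrow M$ is the homotopy fiber of the orbit map
$$\Emb(\mathbb{R}^n,\mathbb{R}^n)\xrightarrow{\iota_2\circ(-)}\Emb(\mathbb{R}^n,M)$$
over the point $\iota_1$. Via the derivative at the origin this identifies with the orbit map $O(n)\to\Fr_M$ through $\iota_2$; since $\Fr_M\to M$ is a principal $O(n)$-bundle, the fiber is contractible when $\iota_1$ lies in the same orbit and empty otherwise. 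Every morphism is consequently an equivalence, and by straightening the forgetful functor $p\colon\Disk_{n/M}^1\to BO(n)$ is the left fibration corresponding to $\Fr_M$ viewed as an $O(n)$-space. Because homotopy orbits of a principal $O(n)$-bundle recover its base, the total space is $(\Fr_M)_{hO(n)}\simeq M$ and $p$ is the tangent classifier $\tau\colon M\to BO(n)$.

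The commutative refinement produces a factorization of $A$ through $BO(n)^\amalg$, so the composite
$$\Disk_{n/M}^1\to\Disk_n\xrightarrow{A_\mathrm{Com}}\Alg_\mathrm{Com}(\Ch_\mathbb{Q})$$
factors as $p$ followed by the functor $BO(n)\to\Alg_\mathrm{Com}(\Ch_\mathbb{Q})$ classifying the underlying $O(n)$-commutative algebra structure on $A$. Under the identification of the previous paragraph, the colimit in question becomes the colimit over $M$ of the $\tau$-twisted local system valued in $A$; equivalently, the homotopy $O(n)$-coinvariants of $\Fr_M\otimes A$ formed in $\Alg_\mathrm{Com}(\Ch_\mathbb{Q})$, which by the definition preceding the proposition is precisely $\Fr_M\otimes_{O(n)}A$. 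Naturality in $M$ follows by tracking through the constructions.

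The principal technical obstacle is the straightening step: one must verify carefully that the slice projection $p$ classifies $\Fr_M$ as an $O(n)$-space, which in turn relies on the deformation retraction of $\Emb(\mathbb{R}^n,\mathbb{R}^n)$ onto $O(n)$ being compatible with the slice structure. Once this is in place, the rest is a formal consequence of the projection formula for colimits along left fibrations together with the definition of $\otimes_{O(n)}$ in $\Alg_\mathrm{Com}(\Ch_\mathbb{Q})$.
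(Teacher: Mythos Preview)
Your approach is essentially the paper's: identify $\Disk_{n/M}^1$ with the Kan complex $M$ mapping to $BO(n)$ via the tangent classifier (the paper cites \cite[2.13]{Francis} for this, while you verify it by hand), then compute the colimit as $(\Fr_M\otimes A)_{O(n)}$---the paper phrases this via composed left Kan extensions and finality of the fiber inside the overcategory, you via straightening/unstraightening, but these are the same computation. One minor slip: the \emph{homotopy} fiber of the orbit inclusion $O(n)\hookrightarrow\Fr_M$ over a point is $\Omega M$, not contractible, so the mapping spaces in $\Disk_{n/M}^1$ are loop spaces of $M$; this is exactly what one expects from $\Disk_{n/M}^1\simeq M$ and does not affect your argument, since every morphism is still invertible and the straightening still yields $\Fr_M$.
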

\begin{proof}
Since the colimit is the left Kan extension to a point, and since Kan extensions compose, we may write $$\colim_{\Disk_{n/M}^1}A_{\mathrm{Com}}\simeq\colim_{BO(n)}\pi_!A_\mathrm{Com}\simeq (\pi_!A_\mathrm{Com})_{O(n)},$$ so that it suffices to identify $\pi_!A_\mathrm{Com}$. 

Since the projection $\Disk_{n/M}\to \Disk_n$ is a left fibration, so is $\pi:\Disk_{n/M}^1\to BO(n)$; in particular, this functor is a coCartesian fibration, which implies that the inclusion $\pi^{-1}(\mathrm{pt})\to \pi_{/\mathrm{pt}}$ of the fiber over the basepoint into the overcategory is a right adjoint and hence final. Therefore, we have $$\pi_!A_\mathrm{Com}=\colim_{\pi_{/\mathrm{pt}}}A_\mathrm{Com}\simeq \colim_{\pi^{-1}(\mathrm{pt})}A_\mathrm{Com}=\pi^{-1}(\mathrm{pt})\otimes A$$ According to \cite[2.13]{Francis}, the $\infty$-category $\Disk_{n/M}^1$ is equivalent to the Kan complex $M$, and the map $\pi:\Disk_{n/M}^1\to BO(n)$ coincides under this identification with the classifying map for the tangent bundle of $M$. In particular, the fiber of this map is $O(n)$-equivalent to $\Fr_M$, which completes the proof.
\end{proof}

\begin{proposition}\label{commutative calculation}
Suppose that $A$ admits a commutative refinement. There is a natural equivalence $$\int_MA\simeq\Fr_M\bigotimes_{O(n)}A.
$$
\end{proposition}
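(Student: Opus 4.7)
The plan is to bootstrap from the preceding proposition, which identifies
\[
\Fr_M \otimes_{O(n)} A \simeq \colim_{\Disk^1_{n/M}} A_{\mathrm{Com}},
\]
with the colimit in $\Alg_{\mathrm{Com}}(\Ch_\mathbb{Q})$, while by definition $\int_M A \simeq \colim_{\Disk_{n/M}} A$ in $\Ch_\mathbb{Q}$. The task is to produce a natural chain of equivalences between these two colimits.

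First I would lift the defining colimit of $\int_M A$ to $\Alg_{\mathrm{Com}}(\Ch_\mathbb{Q})$. The commutative refinement factors $A$ through $A_{\mathrm{Com}}$, and the remark following Theorem \ref{homology theories}, asserting that the symmetric monoidal and ordinary left Kan extensions of $A$ agree because $\otimes$ distributes over sifted colimits in $\Ch_\mathbb{Q}$, combines with preservation of sifted colimits by the forgetful functor $\Alg_{\mathrm{Com}}(\Ch_\mathbb{Q}) \to \Ch_\mathbb{Q}$ to give
\[
\int_M A \simeq \colim_{\Disk_{n/M}} A_{\mathrm{Com}},
\]
the right-hand colimit now computed in $\Alg_{\mathrm{Com}}(\Ch_\mathbb{Q})$ and subsequently forgotten.

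Next I would reduce this colimit to one over $\Disk^1_{n/M}$. Because $A_{\mathrm{Com}}$ is symmetric monoidal and $\Alg_{\mathrm{Com}}(\Ch_\mathbb{Q})$ carries a cocartesian symmetric monoidal structure, $A_{\mathrm{Com}}(\amalg_k \mathbb{R}^n) \simeq A^{\sqcup k}$ is already the coproduct of copies of $A$ indexed by the single-disk components. The universal property of the symmetric monoidal envelope then forces the inclusion $\Disk^1_{n/M} \hookrightarrow \Disk_{n/M}$ to induce an equivalence on colimits valued in a cocartesian target. Concatenating gives
\[
\int_M A \simeq \colim_{\Disk_{n/M}} A_{\mathrm{Com}} \simeq \colim_{\Disk^1_{n/M}} A_{\mathrm{Com}} \simeq \Fr_M \otimes_{O(n)} A,
\]
naturally in $M$, as required.

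The principal obstacle is the cofinality-type reduction in the second step, a genuinely $\infty$-categorical assertion about operadic envelopes. Should this prove delicate, a cleaner route is to verify the Ayala-Francis characterization (Theorem \ref{homology theories}) directly for $M \mapsto \Fr_M \otimes_{O(n)} A$: on $\mathbb{R}^n$ one has $\Fr_{\mathbb{R}^n} \simeq O(n)$ as $O(n)$-spaces, giving $O(n) \otimes_{O(n)} A \simeq A$ as $n$-disk algebras; and for a collar gluing $M = M_1 \cup_{M_0 \times \mathbb{R}} M_2$, the frame bundle $\Fr_M$ is the corresponding pushout of $O(n)$-spaces, so that tensoring over $O(n)$ with the commutative algebra $A$ produces a pushout in $\Alg_{\mathrm{Com}}(\Ch_\mathbb{Q})$, which since coproducts there are tensor products is precisely the relative tensor product demanded by $\otimes$-excision.
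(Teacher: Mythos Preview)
Your primary route and your fallback are in fact the two halves of the paper's own argument, and the paper ends up needing both. It begins exactly as you do, reducing to the claim that
\[
\colim_{\Disk^1_{n/M}} A_{\mathrm{Com}} \longrightarrow \colim_{\Disk_{n/M}} A_{\mathrm{Com}} \longrightarrow \colim_{\Disk_{n/M}} A
\]
are both equivalences; the second is handled just as you say, via siftedness of $\Disk_{n/M}$ (\cite[3.22]{Francis}) and preservation of sifted colimits by the forgetful functor from commutative algebras (\cite[3.2.3.2]{HA}).

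For the first map, however, the paper does \emph{not} attempt the envelope-style cofinality you sketch, and your hesitation is warranted: $\Disk_{n/M}$ is an over-category and carries no symmetric monoidal structure, so the universal property of the monoidal envelope does not apply to it as stated. Instead, the paper invokes \cite[5.1]{Francis} to establish the first equivalence only in the \emph{framed} case, hence in particular for $M=\amalg_k\mathbb{R}^n$. This yields $A\simeq\Fr_{(-)}\otimes_{O(n)}A$ as $n$-disk algebras, and at that point the paper pivots to exactly your fallback: by the characterization in Theorem~\ref{homology theories} it remains only to verify $\otimes$-excision for $M\mapsto\Fr_M\otimes_{O(n)}A$, which holds since $\Fr_{(-)}$ takes collar-gluings to pushouts of $O(n)$-spaces and $-\otimes_{O(n)}A$ preserves such colimits.

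So your alternative \emph{is} the paper's proof. The one place your sketch is thinner is the assertion that $O(n)\otimes_{O(n)}A\simeq A$ ``as $n$-disk algebras'': agreement on the single object $\mathbb{R}^n$ does not suffice, and one must match all the $\Disk_n$-structure maps. The paper secures this by running the full chain of equivalences above on every $\amalg_k\mathbb{R}^n$ via the framed case, rather than by direct inspection.
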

\begin{proof}
By the previous proposition, it suffices to show that the inclusion $\Disk_{n/M}^1\to \Disk_{n/M}$ and the forgetful functor $\Alg_\mathrm{Com}(\Ch_\mathbb{Q})\to \Ch_\mathbb{Q}$ induce equivalences $$\xymatrix{
\displaystyle\colim_{\Disk_{n/M}^1}A_\mathrm{Com}\ar[r]^-\sim&\displaystyle\colim_{\Disk_{n/M}}A_\mathrm{Com}\ar[r]^-\sim&\displaystyle\colim_{\Disk_{n/M}}A
}$$ when $A$ is commutative. 

Since $\Ch_\mathbb{Q}$ is $\otimes$-presentable (see \cite[3.4]{Francis}), the second equivalence follows from \cite[3.22]{Francis}, which asserts that $\Disk_{n/M}$ is sifted, and \cite[3.2.3.2]{HA}, which implies that the forgetful functor from commutative algebras preserves sifted colimits.

The first equivalence holds whenever $M$ is framed by \cite[5.1]{Francis}, since in this case the diagram $$\xymatrix{
\Disk^\fr_{n/M}\ar[d]\ar[r]^-\sim&\Disk_{n/M}\ar[d]\\
\Disk_n^\fr\ar[d]\ar[r]&\Disk_n\ar[d]\\
\mathrm{Fin}=\{e\}^\amalg\ar[r]&BO(n)^\amalg
}$$ commutes. In particular, the equivalence holds for $M=\amalg_k\mathbb{R}^n$, and we conclude that $$A\simeq\Fr_{(-)}\bigotimes_{O(n)}A$$ as $n$-disk algebras. Therefore, the claim will be established once we are assured that the expression on the right satisfies condition (ii) of Theorem \ref{homology theories}. For this, we note that the functor $\Fr_{(-)}$ takes a collar-gluing of manifolds to a pushout of $O(n)$-spaces, and that the functor $-\otimes_{O(n)}A$ preserves colimits of $O(n)$-spaces.
\end{proof}

\subsection{A spectral sequence}\label{spectral sequence section}

We employ a certain ``commutative-to-noncommutative'' spectral sequence in the proof of Theorem \ref{with grading}. For technical reasons, it will be convenient to restrict our attention to $n$-disk algebras valued in $\Ch_\mathbb{Q}^{\geq0}$, the full subcategory of chain complexes concentrated in non-negative homological degree. This restriction is not essential.

\begin{proposition}\label{spectral sequence} Let $M$ be an $n$-manifold and $A$ an $n$-disk algebra in $\Ch_\mathbb{Q}^{\geq0}$. There is a natural first-quadrant spectral sequence $$E^2_{p,q}\cong H_{p,q}\bigg(\Fr_M\bigotimes_{O(n)} H(A)\bigg)\implies H_{p+q}\bigg(\int_MA\bigg),$$ with differential $d^r$ of bidegree $(-r,r-1)$. 
\end{proposition}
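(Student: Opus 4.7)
The plan is to construct the spectral sequence by equipping $A$ with the connective-cover (Postnikov) filtration as an $n$-disk algebra and feeding this through factorization homology. Since $A$ is connective, we obtain a decreasing filtration $F^p A := \tau_{\geq p}A$ of $n$-disk algebras, with $F^0 A = A$, each $F^p A \hookrightarrow A$ a sub-algebra (an ideal for $p\geq 1$) by homological-degree considerations, and successive subquotients $F^p A / F^{p+1} A \simeq H_p(A)[p]$ in chain complexes. Multiplication respects the filtration, $F^p A \cdot F^q A \subseteq F^{p+q} A$, so the associated graded $\mathrm{gr}^\bullet A \cong H(A)$ acquires the (graded-)commutative multiplication inherited from $A$.

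Applying $\int_M$ to the filtered algebra produces a filtered chain complex $\int_M F^\bullet A$ converging to $\int_M A$. Since $\int_M$ is a symmetric monoidal left adjoint, its associated graded is identified term by term with $\int_M H_p(A)[p]$. Each $H_p(A)[p]$ is concentrated in a single homological degree, so it admits a canonical commutative refinement: for $p\geq 1$ every non-unary operation vanishes for degree reasons, and the $p=0$ piece is classically commutative. Proposition \ref{commutative calculation} therefore gives
$$\int_M H_p(A)[p] \simeq \Fr_M \bigotimes_{O(n)} H_p(A)[p],$$
and summing over $p$ identifies the associated graded of $\int_M A$ with the bigraded chain complex $\Fr_M \otimes_{O(n)} H(A)$.

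The spectral sequence of this filtered complex has $E^1$-page the homology of the associated graded, and a $d^1$-differential arising from the part of the differential on $\int_M A$ that lowers filtration level by one. Because the filtered multiplicative structure on $A$ induces the commutative multiplication of $H(A)$ on $\mathrm{gr}^\bullet A$, this $d^1$ is precisely the portion of the differential on $\Fr_M\otimes_{O(n)}H(A)$ (as identified via the commutative refinement and Proposition \ref{commutative calculation}) that mixes distinct Postnikov layers; collating gives
$$E^2_{p,q} \cong H_{p+q}\!\left(\Fr_M \otimes_{O(n)} H(A)\right),$$
with $d^r$ of bidegree $(-r,r-1)$. First-quadrant convergence follows from the connectivity of $A$ together with the non-negative grading of $\Fr_M$. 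The main technical obstacle is showing that $d^1$ really does match the full commutative-algebra differential on $\Fr_M\otimes_{O(n)}H(A)$—not merely a coarser associated-graded version of it. This is a naturality argument threading Proposition \ref{commutative calculation} through the Postnikov filtration and also requires some care because $\int_M$ is not a priori exact on underlying chain complexes, so one has to justify that the filtration and its associated graded are preserved in the relevant sense.
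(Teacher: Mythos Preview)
The paper takes a different and more direct route: it models $\int_M A$ as the bar construction $B(\mathrm{pt},\mathrm{Disj}(M),A)$ over the ordinary poset $\mathrm{Disj}(M)$ of disk-like opens in $M$ and filters by simplicial skeleta. The $E^1$ page is then the bar complex with $A$ replaced termwise by $H(A)$, whence $E^2 = H(\int_M H(A))$, which Proposition \ref{commutative calculation} identifies with $H(\Fr_M\otimes_{O(n)}H(A))$. No separate matching of differentials is required.

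Your argument contains a genuine error at the step ``its associated graded is identified term by term with $\int_M H_p(A)[p]$.'' Neither $\int_M$ nor the tensor $\Fr_M\otimes_{O(n)}(-)$ is additive in the algebra variable, since both are built from tensor powers; applying them to each $H_p(A)[p]$ separately and then summing discards all cross-products between distinct homological degrees of $H(A)$, and for $p\geq 1$ the non-unital object $H_p(A)[p]$ is not even in the domain of $\int_M$ as set up here. The justification you offer (symmetric monoidal left adjoint) yields preservation of tensor products of algebras and of colimits \emph{of algebras}, neither of which addresses cofibers taken in underlying chain complexes. The repair is to treat $A$ with its Postnikov filtration as a filtered $n$-disk algebra whose associated graded \emph{algebra} is $H(A)$ as a whole; since tensor powers and colimits commute with passage to associated graded, one obtains $\mathrm{gr}(\int_M A)\simeq\int_M H(A)$ directly, and Proposition \ref{commutative calculation} then applies once to $H(A)$. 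The differential on $\int_M H(A)$ preserves the induced grading (the product on $H(A)$ is degree-additive), so it is entirely captured at the $E^0$ level and the $d^1$ matching problem you flag dissolves; the resulting spectral sequence has the desired page at $E^1$ rather than $E^2$, a harmless reindexing.
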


The nature of the bigrading will become clear in the proof.

To construct this spectral sequence, we employ a rigidified version of the overcategory $\Disk_{n/M}$, denoted $\mathrm{Disj}(M)$ following \cite[5]{HA}, which is the poset of those open subsets of $M$ diffeomorphic to $\amalg_k\mathbb{R}^n$ for some $k$. We refer the reader to \cite[5.5.2.13]{HA} for the proof of the following result:

\begin{proposition}
There is a final functor $N(\mathrm{Disj}(M))\to \Disk_{n/M}$.
\end{proposition}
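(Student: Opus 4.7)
The plan is to verify the finality of the functor $\varphi: N(\mathrm{Disj}(M)) \to \Disk_{n/M}$ (sending $U \subseteq M$ to the inclusion $U \hookrightarrow M$) using Joyal's $\infty$-categorical analogue of Quillen's Theorem A (HTT 4.1.3.1). It suffices to show that, for every object $(e: D \hookrightarrow M) \in \Disk_{n/M}$ with $D \cong \amalg_k \mathbb{R}^n$, the comma $\infty$-category
\[
\mathcal{F}_e := N(\mathrm{Disj}(M)) \times_{\Disk_{n/M}} (\Disk_{n/M})_{(e)/}
\]
is weakly contractible. Concretely, a vertex of $\mathcal{F}_e$ is a triple $(U, f, h)$ consisting of an open $U \in \mathrm{Disj}(M)$, a smooth embedding $f: D \hookrightarrow U$, and a path $h$ in $\Emb(D, M)$ between $e$ and the composite $D \xrightarrow{f} U \hookrightarrow M$.

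Next, I would exhibit a weakly contractible cofinal subcategory. Fixing identifications, define a descending sequence $\{U_j\}_{j \geq 1}$ of opens in $M$ by restricting $e$ to concentric open disks of radius $1/j$ in each component of $D$ and reparametrising by rescaling; each $U_j$ lies in $\mathrm{Disj}(M)$ and comes equipped with a canonical factorization $e \simeq (U_j \hookrightarrow M) \circ f_j$ together with a preferred path $h_j$. The resulting subposet $\mathcal{P}_e \subseteq \mathcal{F}_e$ is directed under reverse inclusion, so $N(\mathcal{P}_e)$ is weakly contractible.

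To promote this to contractibility of $\mathcal{F}_e$, I would verify that $N(\mathcal{P}_e) \hookrightarrow \mathcal{F}_e$ is itself final by another application of Theorem A. Given $(U, f, h) \in \mathcal{F}_e$, the relevant undercategory consists of pairs $(U_j, \text{path data})$ with $U_j \supseteq U$ together with a path of embeddings in $\Emb(D, U_j)$ connecting $f$ to $f_j$ compatibly with $h$. For $j$ large enough, such data exists and the space of such data is contractible, by the isotopy extension theorem together with the equivalence $\Emb(\amalg_k \mathbb{R}^n, U_j) \simeq \Conf_k^{\mathrm{fr}}(U_j)$ of Proposition \ref{free calculation} and the fact that, as $j$ grows, $\Conf_k^{\mathrm{fr}}(U_j)$ deformation retracts onto the framed configuration specified by $f_j$.

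The main obstacle is the careful management of homotopy-coherence data: producing compatible paths and higher homotopies between $f$, the preferred $f_j$, and the given $h$ rather than merely set-theoretic inclusions. This is handled by a repeated application of the isotopy extension theorem and the standard topology of embedding spaces of disjoint Euclidean spaces. The argument is essentially the one carried out by Lurie in HA 5.5.2.13 for the analogous operad of little cubes in $M$, and the present proposition is a direct transcription of that proof via the equivalence $\Emb(\amalg_k \mathbb{R}^n, -) \simeq \Conf_k^{\mathrm{fr}}(-)$.
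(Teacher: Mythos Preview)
The paper does not actually prove this proposition: it simply refers the reader to \cite[5.5.2.13]{HA}. Your proposal is an attempt to sketch the content of that cited proof, and you explicitly acknowledge as much in your final sentence. In that sense there is nothing to compare.

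That said, your sketch has a directional slip worth fixing before it could stand on its own. Your shrinking opens $U_j$ satisfy $U_{j+1}\subseteq U_j$, so in $N(\mathrm{Disj}(M))$ the morphisms run $U_{j+1}\to U_j$. When you then try to show $\mathcal{P}_e\hookrightarrow\mathcal{F}_e$ is final and describe the relevant undercategory as consisting of $U_j$ with $U_j\supseteq U$, this cannot work: since the $U_j$ shrink toward the centers of $e$, the inclusion $U\subseteq U_j$ will hold for at most finitely many $j$, and typically for none. What one actually wants is $U_j\subseteq U$ for $j$ large (and even this requires care, since $U$ contains the image of $f$, not of $e$ itself; the isotopy $h$ must be used to ensure the small $U_j$ eventually land inside $U$). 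In other words, the inclusion $\mathcal{P}_e\hookrightarrow\mathcal{F}_e$ should be shown to be \emph{initial} rather than final, which equally well yields a weak equivalence on classifying spaces. The overall strategy via Theorem A and shrinking tubular neighborhoods is the right one and is indeed what \cite[5.5.2.13]{HA} carries out; the issue is only in the bookkeeping of directions.
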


Thus, by \cite[4.1.1.8]{HTT}, the factorization homology of $M$ may be computed as a colimit over the nerve of the ordinary category $\mathrm{Disj}(M)$. Having achieved this simplification, we proceed as follows. Using the fact that $\Ch_\mathbb{Q}^{\geq0}$ arises from a combinatorial simplicial model category, \cite[4.2.4.4]{HTT} implies that any functor $N(\mathrm{Disj}(M))\to \Ch_\mathbb{Q}^{\geq0}$ of $\infty$-categories is equivalent in the $\infty$-category of functors to one coming from a functor of ordinary categories. Having chosen such a ``straightening'' of $A$, which we abusively denote by $A$, \cite[4.2.4.1]{HTT} now guarantees that the homotopy colimit of $A$ coincides with the $\infty$-categorical colimit. 

\begin{proof}[Proof of Proposition \ref{spectral sequence}]
From the discussion of the previous paragraph and \cite[5.1.3]{Riehl}, we have equivalences $$\int_MA\simeq \hocolim_{\mathrm{Disj}(M)}A\simeq B(\mathrm{pt},\mathrm{Disj}(M),A),$$ where $B(\mathrm{pt},\mathrm{Disj}(M),A)$ denotes the realization of the simplicial chain complex given in simplicial degree $p$ by $$B_p(\mathrm{pt},\mathrm{Disj}(M),A)=\bigoplus_{U_p\to\cdots\to U_0\to M}A(U_p)$$ (here we use for a second time the fact that the model structure on non-negatively graded chain complexes is simplicial).  Filtering by skeleta in the usual way, we obtain a spectral sequence $$E^1_{p,q}=\bigoplus_{U_p\to\cdots\to U_0\to M}H_q(A(U_p))\implies H_{p+q}\bigg(\int_M A\bigg),$$ with the differential $d^1$ given by the alternating sum of the face maps (see \cite[5.1]{SegalSS}, for example, which treats the case of a simplicial space). In other words, the $E^1$ page is the (graded) chain complex associated to the (graded) simplicial chain complex $B_\bullet(\mathrm{pt},\mathrm{Disj}(M),H(A))$ via the Dold-Kan correspondence, so that, invoking Proposition \ref{commutative calculation}, we have natural isomorphisms $$E^2_{p,q}\cong H_{p,q}(B(\mathrm{pt},\mathrm{Disj}(M),H(A))\cong H_{p,q}\bigg(\int_M H(A)\bigg)\cong H_{p,q}\bigg(\Fr_M\bigotimes_{O(n)} H(A)\bigg).$$
\end{proof}

\begin{remark}
Horel discusses a version of this spectral sequence in \cite[5]{Horel}.
\end{remark}

\subsection{Enveloping algebras}\label{Lie algebras}

In this section, we outline the place of Lie algebras in the theory of factorization homology, the general reference for which is \cite{Knudsen}.

It has long been known that configuration spaces are intimately related to Lie algebras (see \cite{CLM}, \cite{CohenTaylor}, or \cite{Cohen}, for example). To see the connection, suppose that $A$ is a $\Disk_n^\fr$-algebra in chain complexes, with $n\geq2$. Part of the structure of such an object is a multiplication map $$m:\Emb^\fr(\amalg_2\mathbb{R}^n,\mathbb{R}^n)\otimes A^{\otimes 2}\to A,$$ and since the homology of $\Emb^\fr(\amalg_2\mathbb{R}^n,\mathbb{R}^n)\simeq\Conf_2(\mathbb{R}^n)\simeq S^{n-1}$ is concentrated in degrees 0 and $n-1$, this multiplication encodes two maps $$m_0:A^{\otimes2}\to A\qquad\text{and}\qquad m_{n-1}: A^{\otimes 2}\to A[1-n]$$ defining a commutative multiplication on $A$ and a Lie bracket on $A[n-1]$, again up to homotopy. The Jacobi identity for $m_{n-1}$ follows from the ``three-term'' or ``Yang-Baxter'' relations in $H_*(\Conf_3(\mathbb{R}^n))$ (see \cite{FH}), and $O(n)$, acting on $S^{n-1}$ by degree plus and minus one maps, interchanges it with the opposite bracket.

The fact that this discussion illustrates is the existence of a forgetful functor from $\Disk_n^\fr$-algebras to Lie algebras at the level of $\infty$-categories. Indeed, according to \cite{Knudsen}, there is a commuting diagram of adjunctions $$\xymatrix{
\Alg_{\Disk_n^\fr}(\Ch_\mathbb{Q})\ar[dd]\ar[rr]&&\Alg_{\Lie}(\Ch_\mathbb{Q})\ar[dd]\ar@/_1.2pc/[ll]_{U_n}\\\\
\Ch_\mathbb{Q}\ar@/^1.2pc/[uu]^-{\mathbb{F}_n^\fr}\ar[rr]^-{[n-1]}&&\Ch_\mathbb{Q}\ar@/_1.2pc/[uu]_-{\Lie}\ar@/^1.2pc/[ll]^{[1-n]}.
}$$ Here $\Lie$ denotes the free Lie algebra functor.

The $\Disk_n^\fr$-algebra $U_n(\mathfrak{g})$ is known as the \emph{$n$-enveloping algebra} of $\mathfrak{g}$ (see \cite[4.6]{Gwilliam} for a discussion of the identification between $U_1$ and the usual universal enveloping algebra). The factorization homology of these algebras is computed in \cite{Knudsen}:

\begin{theorem}[K]\label{enveloping calculation 1}
There is a natural equivalence $$\xymatrix{\displaystyle\int_M U_n(\mathfrak{g})\ar[r]^-\sim&C^\Lie(\mathfrak{g}^{M^+}).
}$$
\end{theorem}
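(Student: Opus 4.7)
The plan is to verify the equivalence by invoking the Ayala--Francis classification (Theorem \ref{homology theories}, framed variant): any symmetric monoidal collar-gluing-excisive functor from $\Mfld_n^\fr$ to $\Ch_\mathbb{Q}$ is factorization homology with coefficients in its value on $\mathbb{R}^n$. It therefore suffices to show that the assignment $\Phi(M) := C^\Lie(\mathfrak{g}^{M^+})$ defines such a functor and that $\Phi(\mathbb{R}^n) \simeq U_n(\mathfrak{g})$ as framed $n$-disk algebras.

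Symmetric monoidality is essentially formal: one-point compactification takes disjoint unions of manifolds to wedges of pointed spaces; the cotensor $\mathfrak{g}^{(-)}$ takes wedges to products in $\Alg_\Lie$; and $C^\Lie$ converts products of Lie algebras to tensor products of chain complexes via K\"unneth. For collar-gluing excision along $M = M_1 \cup_{M_0 \times \mathbb{R}} M_2$, I would apply Mayer--Vietoris for compactly supported cochains to exhibit $C_c^{-*}(M)$ as a pushout, then tensor with $\mathfrak{g}$ to realize $\mathfrak{g}^{M^+}$ as a homotopy pushout of Lie algebras, and finally apply $C^\Lie$, interpreting the relative tensor product as the corresponding two-sided bar construction, to obtain the identity required by the excision axiom.

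For the value on $\mathbb{R}^n$, note that $(\mathbb{R}^n)^+ = S^n$ and $\tilde{C}^{-*}(S^n) = \mathbb{Q}[-n]$ is a non-unital commutative algebra whose multiplication vanishes on degree grounds, so $\mathfrak{g}^{S^n}$ is the abelian Lie algebra $\mathfrak{g}[-n]$ and hence $\Phi(\mathbb{R}^n) \simeq \Sym(\mathfrak{g}[1-n])$ as a chain complex. To promote this to an equivalence of framed $n$-disk algebras with $U_n(\mathfrak{g})$, I would reduce to the case $\mathfrak{g} = \Lie(W)$ free---both sides commute with sifted colimits in $\mathfrak{g}$, since $U_n$ is a left adjoint and on the other side the cotensor $(-)^{M^+}$ and $C^\Lie$ both preserve sifted colimits---and then invoke the adjunction identity $U_n(\Lie(W)) \simeq \mathbb{F}_n^\fr(W[1-n])$ together with the characteristic-zero Poisson PBW identification $\mathbb{F}_n^\fr(V) \simeq \Sym(\Lie(V[n-1])[1-n])$ to match both sides with $\Sym(\Lie(W)[1-n])$; compatibility of the $\Disk_n^\fr$-algebra structures then follows by naturality in the embeddings $\amalg_k \mathbb{R}^n \hookrightarrow \mathbb{R}^n$.

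I expect the main obstacle to lie in the excision step. The Mayer--Vietoris pushout presents $\mathfrak{g}^{M^+}$ as a pushout of the underlying \emph{chain complexes} of Lie algebras, whereas the factorization-homology excision axiom demands a relative tensor product over the $E_1$-algebra $\Phi(M_0 \times \mathbb{R})$. Matching these two structures---in particular identifying the $E_1$-algebra and bimodule structures carried by Chevalley--Eilenberg complexes of cotensor Lie algebras with those arising from the collar-gluing---is the technical heart of the argument and ultimately rests on the Koszul duality between the commutative and Lie operads made compatible with the local-to-global structure of factorization homology.
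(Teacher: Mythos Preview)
The paper does not prove Theorem \ref{enveloping calculation 1} itself---it is quoted from \cite{AFK}---but the proof given for the $O(n)$-equivariant generalization, Proposition \ref{enveloping calculation 3}, exhibits the intended method, and it is different from yours. The key observation there is that $C^\Lie$ is a \emph{left adjoint} (from Lie algebras to cocommutative coalgebras) and therefore commutes with the colimit over $\Disk_{n/M}$ that defines factorization homology. One is then reduced to computing $\int_M \mathfrak{g}^{(\mathbb{R}^n)^+}$ as a colimit of Lie algebras; since sifted colimits of Lie algebras are created in $\Ch_\mathbb{Q}$, this is supplied directly by results of \cite{Francis} identifying that colimit with $\mathfrak{g}^{M^+}$. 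No separate verification of $\otimes$-excision for the composite $M \mapsto C^\Lie(\mathfrak{g}^{M^+})$ is required, and the identification on $\mathbb{R}^n$ comes for free rather than via a reduction to free Lie algebras and PBW.

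Your strategy---verifying excision for $\Phi$ directly and then invoking the classification---is viable in principle, and you have correctly isolated the hard step: matching the Mayer--Vietoris pushout of Lie algebras with the relative tensor product over the $E_1$-algebra $\Phi(M_0\times\mathbb{R})$ is precisely where commutative/Lie Koszul duality must be made to interact with collar-gluing, and it is not a formality. The paper's route sidesteps this entirely by moving $C^\Lie$ inside the colimit first, so that the excision burden is borne by the much simpler Lie-algebra-valued functor $M\mapsto \mathfrak{g}^{M^+}$. One small correction to your sketch: your claim that $\mathfrak{g}^{S^n}$ is abelian because the product on $\widetilde C^{-*}(S^n)$ vanishes is only true at the level of cohomology; at the chain level $A_{PL}(S^n)$ has nontrivial products, and the identification $\Phi(\mathbb{R}^n)\simeq \Sym(\mathfrak{g}[1-n])$ requires the formality of $S^n$.
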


We pause briefly to explain the terms of the theorem. 

\begin{enumerate}
\item The $\infty$-category of differential graded Lie algebras has limits and is therefore cotensored over pointed spaces; we denote by $\mathfrak{g}^X$ the cotensor of the pointed space $X$ with the Lie algebra $\mathfrak{g}$. A model for this object is provided by \cite[4.8.3]{Hinich}:

\begin{proposition}\label{cotensor identification}
Let $X$ be a pointed finite CW complex. There is a natural equivalence $$\mathfrak{g}^X\simeq A_{PL}(X)\otimes\mathfrak{g}.$$
\end{proposition}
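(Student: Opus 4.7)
The plan is to verify that $A_{PL}(X) \otimes \mathfrak{g}$ satisfies the universal property of the cotensor $\mathfrak{g}^X$, reducing the verification to the contractible case by cell induction on finite CW complexes.

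The first key observation is that both functors $X \mapsto \mathfrak{g}^X$ and $X \mapsto A_{PL}(X) \otimes \mathfrak{g}$ send homotopy pushouts of finite CW complexes to homotopy pullbacks in dg Lie algebras. For the cotensor this is immediate from its defining property of converting colimits in $X$ into limits in the target. For the right-hand functor, I would invoke Sullivan's theorem that $A_{PL}$ sends homotopy pushouts of spaces to homotopy pullbacks of CDGAs, and then use that $-\otimes\mathfrak{g}$ preserves finite limits of chain complexes over $\mathbb{Q}$. One must check that the dg Lie structure on the resulting pullback agrees with the pullback of dg Lie structures; this holds because the projection and inclusion maps in a pullback of CDGAs are algebra homomorphisms and are therefore compatible with the induced brackets on each factor.

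With the pushout-to-pullback property in hand, any finite CW complex can be built from a point by iterated cell attachments $X \simeq X'\cup_{S^{n-1}} D^n$, so it suffices to establish the equivalence on a point, where both sides reduce to $\mathfrak{g}$: the cotensor by its universal property, and $A_{PL}(\mathrm{pt})\otimes\mathfrak{g} \simeq \mathbb{Q}\otimes\mathfrak{g} \cong \mathfrak{g}$. To promote this to a \emph{natural} equivalence, I would construct a comparison map $A_{PL}(X)\otimes\mathfrak{g} \to \mathfrak{g}^X$ of dg Lie algebras by exploiting the Sullivan--de Rham adjunction between spaces and CDGAs: its unit provides a natural evaluation $X \to \langle A_{PL}(X) \rangle$ which, via the universal property of the cotensor, dualizes to the desired comparison map. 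The cell induction above then certifies that this natural transformation is a weak equivalence.

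The main obstacle will be to track the dg Lie structure coherently through the cell induction---specifically, to verify that the finite limits computed term-wise via $-\otimes\mathfrak{g}$ on chain complexes really do compute the relevant limits in the $\infty$-category of dg Lie algebras, so that the candidate comparison respects brackets at each stage of the induction. This compatibility between the CDGA structure on $A_{PL}(X)$ and the bracket on $\mathfrak{g}$ is the technical crux and is precisely what the cited reference from Hinich establishes; once it is in place, the argument is a formal interpolation between Sullivan's commutative model of spaces and the Lie-theoretic content supplied by $\mathfrak{g}$.
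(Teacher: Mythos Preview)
The paper does not supply its own proof of this proposition; it is stated as a citation of \cite[4.8.3]{Hinich}, so there is no argument in the paper to compare your proposal against. Your outline is a reasonable sketch of how such a statement is typically established---reduce by cell induction to a point, using that the cotensor sends colimits in $X$ to limits and that $A_{PL}$ does likewise---and is in the spirit of Hinich's treatment.

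That said, your proposal is somewhat circular as written: you identify the ``technical crux'' as the compatibility of the Lie structure with the limit decomposition and then say this ``is precisely what the cited reference from Hinich establishes.'' If you are going to defer that point to Hinich, you have not really given an independent proof. To make the argument self-contained you would want to (i) observe directly that the forgetful functor from dg Lie algebras to chain complexes creates limits (it is a right adjoint), so homotopy pullbacks of Lie algebras are computed on underlying complexes, and (ii) construct the comparison map more concretely than via the Sullivan--de Rham adjunction---for instance, for each point $x\in X$ the evaluation $A_{PL}(X)\to\mathbb{Q}$ gives a Lie map $A_{PL}(X)\otimes\mathfrak{g}\to\mathfrak{g}$, and these assemble into a map to $\mathfrak{g}^X$ by the universal property of the cotensor. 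With those two points made explicit, your induction goes through.
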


Here $A_{PL}$ denotes the functor of reduced $PL$ de Rham forms (see \cite[10(c)]{FHT}, for example), and the righthand side carries the canonical Lie bracket on the tensor product of a nonunital commutative algebra and a Lie algebra, which is defined by the formula $$[a\otimes v,b\otimes w]=(-1)^{|v||b|}ab\otimes[v,w].$$

\item The symbol $C^\Lie$ denotes the functor of Lie algebra chains. This coaugmented cocommutative coalgebra is defined abstractly via the monadic bar construction against the free Lie algebra monad, but it has a concrete incarnation as the \emph{Chevalley-Eilenberg complex} $$CE(\mathfrak{g})=(\Sym(\mathfrak{g}[1]), d_\mathfrak{g}+D),$$ where $D$ is defined as a coderivation by specifying that $$D(\sigma x\wedge\sigma y)=(-1)^{|x|}\sigma[x,y].$$ See \cite[6]{FresseKD} for a discussion of the comparison between the monadic bar construction and the Chevalley-Eilenberg complex. We remark that $CE(\mathfrak{g})$ is a coaugmented cocommutative differential graded coalgebra, and the resulting coproduct on $H^\Lie(\mathfrak{g})$ coincides with the one inherited from the monadic bar construction; indeed, both are induced by the diagonal $\mathfrak{g}\to \mathfrak{g}\oplus\mathfrak{g}$, which is a map of Lie algebras.
\end{enumerate}

The equivalence of Theorem \ref{enveloping calculation 1} specializes to a natural equivalence \[U_n(\mathfrak{g})\simeq C^\Lie(\mathfrak{g}^{(\mathbb{R}^n)^+})\] of $\Disk^\fr_n$-algebras. In this way, Theorem \ref{enveloping calculation 1} can be thought of as identifying an $n$-disk algebra refinement of the $\Disk_n^\fr$-algebra $U_n(\mathfrak{g})$, so that the expression $\int_MU_n(\mathfrak{g})$ is sensible for arbitrary, and not only framed, manifolds $M$.

Returning to the discussion that began this section, if $A$ is now an $n$-disk algebra rather than merely a $\Disk^\fr_n$-algebra, then $A$ determines a shifted Lie algebra in $O(n)$-modules, but now with $O(n)$ acting on the suspension coordinates. A full discussion of this phenomenon and the corresponding enveloping algebra is beyond the scope of this paper. Since the analogue of Theorem \ref{enveloping calculation 1} is true in that context, we will content ourselves with making it our definition.

As a matter of notation, if $X$ is a pointed $O(n)$-space and $\mathfrak{g}$ a Lie algebra in $O(n)$-modules, we denote the $O(n)$-invariants of $\mathfrak{g}^X$ by $\Map^{O(n)}(X,\mathfrak{g})$.

\begin{definition}\label{enveloping definition}
Let $\mathfrak{g}$ be a Lie algebra in $O(n)$-modules. The $n$-\emph{enveloping algebra} of $\mathfrak{g}$ is the $n$-disk algebra $$U_n(\mathfrak{g})=C^\Lie(\textstyle\Map^{O(n)}(\Fr_{(\mathbb{R}^n)^+},\mathfrak{g})).$$
\end{definition}

Here we take the frame bundle of the one-point compactification to be the cofiber $$\Fr_{M^+}=\xymatrix{\mathrm{cofib}(\Fr_{\bar M}|_{\partial \bar M}\ar[r]& \Fr_{\bar M})}$$ of $O(n)$-spaces, where $\bar M$ is a compact $n$-manifold with boundary whose interior is $M$ (see \cite[4.5.1]{AFZ} for a more invariant interpretation of this object). 

A choice of framing of $\mathbb{R}^n$ trivializes $\Fr_{(\mathbb{R}^n)^+}$, inducing an equivalence $$\textstyle\Map^{O(n)}(\Fr_{(\mathbb{R}^n)^+},\mathfrak{g})\simeq \mathfrak{g}^{(\mathbb{R}^n)^+},$$ which is even equivariant for the diagonal action of $O(n)$ on the target, so this definition specializes via Theorem \ref{enveloping calculation 1} to our earlier one when $\mathfrak{g}$ is an ordinary Lie algebra. 

The corresponding factorization homology calculation is the following:

\begin{proposition}\label{enveloping calculation 3}
There is a natural equivalence $$\xymatrix{\displaystyle\int_MU_n(\mathfrak{g})\ar[r]^-\sim&\textstyle C^\Lie(\Map^{O(n)}(\Fr_{M^+},\mathfrak{g}))}$$ for $M$ an $n$-manifold and $\mathfrak{g}$ a Lie algebra in $O(n)$-modules.
\end{proposition}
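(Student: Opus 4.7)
The plan is to apply the characterization of factorization homology from Theorem \ref{homology theories}. Define a symmetric monoidal functor
\[
\mathcal{F}: \Mfld_n \longrightarrow \Ch_\mathbb{Q}, \qquad \mathcal{F}(M) := C^\Lie(\Map^{O(n)}(\Fr_{M^+}, \mathfrak{g})),
\]
as the composite
\[
\Mfld_n \xrightarrow{\Fr_{(-)^+}} (\text{pointed } O(n)\text{-spaces})^{\op} \xrightarrow{\Map^{O(n)}(-,\mathfrak{g})} \Alg_\Lie(\Ch_\mathbb{Q}) \xrightarrow{C^\Lie} \Ch_\mathbb{Q},
\]
and verify its two characterizing properties: that the restriction of $\mathcal{F}$ to $\Disk_n$ is equivalent to $U_n(\mathfrak{g})$ as an $n$-disk algebra, and that $\mathcal{F}$ is $\otimes$-excisive. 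Uniqueness in Theorem \ref{homology theories} then identifies $\mathcal{F}$ with $\int_{(-)} U_n(\mathfrak{g})$.

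The first property is essentially definitional: the value of $\mathcal{F}$ on $\mathbb{R}^n$ is $U_n(\mathfrak{g})$ by Definition \ref{enveloping definition}, and the full $n$-disk algebra structure is determined by tracking the symmetric monoidal structure of the composite. Here one verifies that $\Fr_{(-)^+}$ takes disjoint unions of manifolds to wedges of pointed $O(n)$-spaces (via the cofiber presentation $\Fr_{M^+} = \mathrm{cofib}(\Fr_{\bar M}|_{\partial \bar M} \to \Fr_{\bar M})$), that $\Map^{O(n)}(-, \mathfrak{g})$ converts wedges into direct sums of Lie algebras, and that the Chevalley--Eilenberg complex satisfies $C^\Lie(\mathfrak{g}_1 \oplus \mathfrak{g}_2) \simeq C^\Lie(\mathfrak{g}_1) \otimes C^\Lie(\mathfrak{g}_2)$, which follows from $\Sym((\mathfrak{g}_1 \oplus \mathfrak{g}_2)[1]) \cong \Sym(\mathfrak{g}_1[1]) \otimes \Sym(\mathfrak{g}_2[1])$ and compatibility of the Chevalley--Eilenberg differential.

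For $\otimes$-excision, given a collar gluing $M = M_1 \cup_{M_0 \times \mathbb{R}} M_2$, I trace the gluing through the three-step composite. First, $\Fr_{(-)^+}$ should convert the collar gluing into a pushout square of pointed $O(n)$-spaces, which can be extracted from the cofiber definition of $\Fr_{M^+}$ together with the behavior of one-point compactification on collar gluings in the zero-pointed formalism of \cite{AF}. Second, since $\Map^{O(n)}(-, \mathfrak{g})$ is a right adjoint in the space variable, it converts this pushout into a homotopy pullback square of differential graded Lie algebras whose structure maps into $\Map^{O(n)}(\Fr_{(M_0 \times \mathbb{R})^+}, \mathfrak{g})$ are surjective, because the $\mathbb{R}$-coordinate of the collar supplies a retraction. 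Third, $C^\Lie$ converts such surjective pullbacks of Lie algebras into relative tensor products of their Chevalley--Eilenberg coalgebras, yielding
\[
\mathcal{F}(M) \simeq \mathcal{F}(M_1) \otimes_{\mathcal{F}(M_0 \times \mathbb{R})} \mathcal{F}(M_2).
\]

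The principal obstacle is this last step: the statement that $C^\Lie$ converts surjective homotopy pullbacks of dg Lie algebras into relative tensor products of their Chevalley--Eilenberg coalgebras. This is an Eilenberg--Moore / Koszul duality--type assertion, which one can attack either via a direct spectral-sequence argument using the symmetric-power filtration on $\Sym(\mathfrak{g}[1])$ and comparison with the Tor spectral sequence, or more abstractly by invoking bar--cobar duality between Lie algebras and coaugmented cocommutative coalgebras to translate the pullback presentation on the Lie side into a tensor-product presentation on the coalgebra side. Since the framed analogue of the proposition is Theorem \ref{enveloping calculation 1}, one might alternatively attempt to descend the result from the framed case along the $O(n)$-bundle $\Fr_M \to M$, though making this route precise requires some care with equivariant factorization homology.
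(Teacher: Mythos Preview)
Your approach differs from the paper's, and the obstacle you flag is genuine: the claim that $C^\Lie$ sends a surjective pullback of Lie algebras to a relative tensor product is a nontrivial Eilenberg--Moore statement that you outline but do not prove. The paper avoids it entirely with a much shorter argument.

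The key observation is that $C^\Lie$ is a symmetric monoidal left adjoint from $(\Alg_\Lie(\Ch_\mathbb{Q}),\oplus)$ to $(\Ch_\mathbb{Q},\otimes)$, and therefore commutes with the left Kan extension computing factorization homology. Writing $\mathfrak{h}$ for the $n$-disk algebra $\Map^{O(n)}(\Fr_{(\mathbb{R}^n)^+},\mathfrak{g})$ in Lie algebras, this gives
\[
\int_M U_n(\mathfrak{g}) \;=\; \int_M C^\Lie(\mathfrak{h}) \;\simeq\; C^\Lie\Big(\int_M \mathfrak{h}\Big),
\]
where the inner integral is factorization homology taken in Lie algebras. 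Since $\Disk_{n/M}$ is sifted and sifted colimits of Lie algebras are computed in $\Ch_\mathbb{Q}$, this reduces to a chain-level computation, which the paper identifies with $\Map^{O(n)}(\Fr_{M^+},\mathfrak{g})$ by citing \cite[4.5.3,\,4.7.1]{Francis}. In effect, the paper trades your delicate ``$C^\Lie$ takes a particular limit to a particular colimit'' for the formal ``left adjoints preserve colimits,'' at the price of first phrasing the problem inside Lie algebras. Your third suggestion, descent from the framed case, gestures in this direction, but the clean formulation is simply to pull $C^\Lie$ outside the integral; once you do that, there is no excision to verify by hand.
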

\begin{proof}
Since $C^\Lie$, as a left adjoint, preserves colimits, it suffices to exhibit an equivalence of Lie algebras $$\xymatrix{\displaystyle\int_M\mathfrak{g}^{(\mathbb{R}^n)^+}\ar[r]^-\sim& \Map^{O(n)}(\Fr_{M^+},\mathfrak{g}),}$$ which is supplied by \cite[4.5.3,\,4.7.1]{Francis}, since sifted colimits of Lie algebras are computed in $\Ch_\mathbb{Q}$ by \cite[2.1.16]{LurieX}.
\end{proof}

We close this section with a definition of a grading that will play an important role in what follows. Let $\mathfrak{g}$ be a differential graded Lie algebra with a \emph{weight decomposition} as a direct sum of complexes $\mathfrak{g}=\bigoplus_k\mathfrak{g}(k)$ with the property that $[v,w]\in \mathfrak{g}(r+s)$ when $v\in\mathfrak{g}(r)$ and $w\in\mathfrak{g}(s)$.

\begin{example}
A free Lie algebra has a canonical weight decomposition $$\Lie(V)=\bigoplus_{k\geq0}\Lie(k)\otimes_{\Sigma_k}V^{\otimes k}.$$
\end{example}

\begin{example}
If $\mathfrak{g}$ has a weight decomposition, then $A_{PL}(X)\otimes\mathfrak{g}$ carries a canonical weight decomposition for any space $X$.
\end{example}

\noindent Such a decomposition induces a \emph{weight grading} on the underlying graded vector space of $\Sym(\mathfrak{g}[1])$ of the Chevalley-Eilenberg complex. In fact, since we have assumed that the bracket and differential of $\mathfrak{g}$ each respect the weight decomposition, the Chevalley-Eilenberg differential applied to a monomial of pure weight $k$ again has pure weight $k$, so that $CE(\mathfrak{g})$ is a bicomplex. In this way, a weight decomposition of $\mathfrak{g}$ induces a weight grading on $H(U_n(\mathfrak{g}))$.

\section{Configuration Spaces}

\subsection{The main result} \label{proof one} In this section, we prove Theorem \ref{with grading} assuming the validity of several results, discussion of which is postponed for the sake of continuity, as the proofs involve different techniques from those used thus far. 

As a preliminary step, we have the following basic pair of observations:

\begin{proposition}
\begin{enumerate}
\item Let $K$ be an $O(n)$-module and $\underline{K}$ its underlying chain complex. There is a natural equivalence of framed $n$-disk algebras $$\mathbb{F}^\fr_n(\underline{K})\simeq \mathbb{F}_n(K).$$
\item Let $\mathfrak{g}$ be a Lie algebra in $O(n)$-modules and $\underline{\mathfrak{g}}$ its underlying Lie algebra. There is a natural equivalence of framed $n$-disk algebras $$U_n(\underline{\mathfrak{g}})\simeq U_n(\mathfrak{g}).$$
\end{enumerate}
\end{proposition}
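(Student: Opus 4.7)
The plan is to construct the natural equivalences by establishing pointwise equivalences on framed disk models that are natural in the framed embedding category $\Disk_n^\fr$; such pointwise data is precisely what a framed $n$-disk algebra amounts to, so naturality will upgrade automatically.

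For (1), Proposition \ref{free identification} computes $\mathbb{F}_n(K)$ on $M$ as $\bigoplus_k\Emb(\amalg_k\mathbb{R}^n,M)\otimes_{\Sigma_k\ltimes O(n)^k}K^{\otimes k}$, while the analogous formula with $\Emb^\fr$ in place of $\Emb$ and with $O(n)^k$ omitted from the denominator computes $\mathbb{F}_n^\fr(\underline K)$ on framed $M$. When $M$ is framed, the derivative realizes $\Emb^\fr(\amalg_k\mathbb{R}^n,M)\to\Emb(\amalg_k\mathbb{R}^n,M)$ as a principal $O(n)^k$-bundle that the framing trivializes, yielding $\Emb(\amalg_k\mathbb{R}^n,M)\simeq\Emb^\fr(\amalg_k\mathbb{R}^n,M)\times O(n)^k$ as $\Sigma_k\ltimes O(n)^k$-spaces. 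Passing to coinvariants makes the $O(n)^k$ factor cancel the $O(n)^k$-action on $K^{\otimes k}$, producing the desired equivalence naturally in framed $M$.

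For (2), I would apply the same strategy, now using Proposition \ref{enveloping calculation 3} and Theorem \ref{enveloping calculation 1}. The restriction $r(U_n(\mathfrak{g}))$ assigns to framed $M$ the chain complex $C^\Lie(\Map^{O(n)}(\Fr_{M^+},\mathfrak{g}))$, while $U_n^\fr(\underline{\mathfrak{g}})$ assigns $C^\Lie(\underline{\mathfrak{g}}^{M^+})$. When $M$ is framed, a framing trivializes $\Fr_{M^+}\simeq M^+\wedge O(n)_+$ equivariantly, and for a free pointed $O(n)$-space of the form $X\wedge O(n)_+$, $O(n)$-equivariant maps to a Lie algebra in $O(n)$-modules are uniquely determined by ordinary maps from $X$ into the underlying chain complex, giving $\Map^{O(n)}(\Fr_{M^+},\mathfrak{g})\simeq\underline{\mathfrak{g}}^{M^+}$. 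Applying $C^\Lie$ completes the argument.

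The main technical subtlety is verifying that these pointwise equivalences are natural in framed embeddings; this hinges on showing that the framing trivialization of $\Fr_{M^+}$ (respectively, of the derivative bundle in (1)) is natural with respect to framed maps, which follows from the compatibility of the tangential $O(n)$-structure with framings. Once established, naturality promotes the pointwise chain-level equivalences to equivalences of framed $n$-disk algebras, completing both parts of the proposition.
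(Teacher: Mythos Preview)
Your proposal is correct and follows essentially the same approach as the paper. For (i), both you and the paper use Proposition~\ref{free identification} together with the framing-induced trivialization $\Emb(\amalg_k\mathbb{R}^n,M)\simeq\Emb^\fr(\amalg_k\mathbb{R}^n,M)\times O(n)^k$ to cancel the $O(n)^k$-action on $K^{\otimes k}$; for (ii), the paper simply says the claim is ``immediate from Definition~\ref{enveloping definition},'' relying on the remark following that definition that a framing of $\mathbb{R}^n$ trivializes $\Fr_{(\mathbb{R}^n)^+}$, which is precisely the trivialization $\Fr_{M^+}\simeq M^+\wedge O(n)_+$ you spell out. The only difference is one of emphasis: the paper works at the single object $\mathbb{R}^n$ and leaves naturality implicit, whereas you work over general $M\in\Disk_n^\fr$ and flag naturality as the main subtlety; your invocation of Proposition~\ref{enveloping calculation 3} and Theorem~\ref{enveloping calculation 1} is slightly heavier machinery than needed (since on $\Disk_n^\fr$ factorization homology just returns the algebra values), but it is not incorrect.
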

\begin{proof} A choice of framing for $\mathbb{R}^n$ induces an $O(n)$-equivariant homotopy equivalence $$\xymatrix{\Emb^{\mathrm{fr}}(\amalg_k\mathbb{R}^n,\mathbb{R}^n)\times O(n)^k\ar[r]^-\sim& \Emb(\amalg_k\mathbb{R}^n,\mathbb{R}^n)},$$ whence from Proposition \ref{free identification} we have \begin{align*}\mathbb{F}_n(K)&\cong\bigoplus_{k\geq0}\bigg(\Emb^{\mathrm{fr}}(\amalg_k\mathbb{R}^n,\mathbb{R}^n)\times O(n)^k\bigg)\bigotimes_{\Sigma^k\ltimes O(n)^k}K^{\otimes k}\\
&\cong\bigoplus_{k\geq0}\bigg(\bigg(\Emb^{\mathrm{fr}}(\amalg_k\mathbb{R}^n,\mathbb{R}^n)\times O(n)^k\bigg)\bigotimes_{O(n)^k}K^{\otimes k}\bigg)_{\Sigma_k}\\
&\cong\bigoplus_{k\geq0}\Emb^{\mathrm{fr}}(\amalg_k\mathbb{R}^n,\mathbb{R}^n)\bigotimes_{\Sigma_k}K^{\otimes k}\\
&\cong\mathbb{F}_n^{\mathrm{fr}}(\underline{K}).
\end{align*} This proves (i), and (ii) is immediate from Definition \ref{enveloping definition}.
\end{proof}

\begin{remark}
Thinking topologically, the generic example of an $n$-disk algebra in spaces is an $n$-fold loop space on an $O(n)$-space $X$ (see \cite{SalvatoreWahl} or \cite{Wahl}). In this context, the statement is that, \emph{as an $n$-fold loop space}, the homotopy type of $\Omega^n X$ does not depend on the action of $O(n)$ on $X$. 
\end{remark}

Connecting (i) and (ii) is the following formal observation, which amounts to the statement that left adoints compose.

\begin{proposition}
Let $V$ be a chain complex. There is a natural equivalence $$\xymatrix{\mathbb{F}_n^{\mathrm{fr}}(V)\ar[r]^-\sim& U_n(\Lie(V[n-1]))}$$ of framed $n$-disk algebras, where $\Lie$ is the free Lie algebra functor.
\end{proposition}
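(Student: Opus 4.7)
My plan is to derive this equivalence as a formal consequence of the diagram of adjunctions displayed in \S\ref{Lie algebras}, imported from \cite[1.1]{AFK}. Both sides of the asserted equivalence are, as functors of $V$, left adjoints to a common functor $\Alg_{\Disk_n^\fr}(\Ch_\mathbb{Q}) \to \Ch_\mathbb{Q}$; once we recognize this, uniqueness of left adjoints delivers the equivalence automatically, and at the level of framed $n$-disk algebras, since both sides are manifestly objects of $\Alg_{\Disk_n^\fr}(\Ch_\mathbb{Q})$.

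Concretely, I would observe that the forgetful functor $\Alg_{\Disk_n^\fr}(\Ch_\mathbb{Q}) \to \Ch_\mathbb{Q}$ factors as the composite of right adjoints
\[
\Alg_{\Disk_n^\fr}(\Ch_\mathbb{Q}) \longrightarrow \Alg_{\Lie}(\Ch_\mathbb{Q}) \longrightarrow \Ch_\mathbb{Q} \xrightarrow{[1-n]} \Ch_\mathbb{Q},
\]
in which the first arrow sends a framed $n$-disk algebra to its underlying shifted Lie algebra. The left adjoints of these three arrows are $U_n$, $\Lie$ and $[n-1]$ respectively, so the left adjoint of the composite is $V \mapsto U_n(\Lie(V[n-1]))$. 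By definition, the left adjoint of the direct forgetful functor is $\mathbb{F}_n^\fr$. Comparing the two computations yields the natural equivalence $\mathbb{F}_n^\fr(V) \simeq U_n(\Lie(V[n-1]))$ in $\Alg_{\Disk_n^\fr}(\Ch_\mathbb{Q})$.

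The only substantive input to this argument is the commutativity of the above factorization of the forgetful functor---equivalently, the claim that the underlying chain complex of a framed $n$-disk algebra carries a canonical shifted Lie algebra structure compatible with the forgetful functor to chain complexes. This is precisely what the diagram cited from \cite{AFK} encodes at the $\infty$-categorical level, and is illustrated at the start of \S\ref{Lie algebras} via the homology class of $\Conf_2(\mathbb{R}^n) \simeq S^{n-1}$ that gives rise to the bracket. Given this input, the proof is essentially a one-line invocation of the uniqueness of left adjoints; the main "obstacle" is really just the bookkeeping needed to ensure that the comparison is genuinely one of framed $n$-disk algebras rather than merely of underlying chain complexes, which the left-adjoint framing handles for free.
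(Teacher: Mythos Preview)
Your proposal is correct and is exactly the argument the paper intends: the proposition is introduced there as ``the following formal observation, which amounts to the statement that left adjoints compose,'' and you have simply spelled out that observation using the commuting square of adjunctions from \S\ref{Lie algebras}. There is nothing more to add.
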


This observation is a generalization of the familiar fact that the universal enveloping algebra of the free Lie algebra on a set of generators $S$ is free on $S$ as an associative algebra; however, equipped with the involution given by its Hopf algebra antipode, the universal enveloping algebra of the free Lie algebra on $S$ is \emph{not} the free algebra-with-involution on $S$. This classical fact illustrates the $n=1$ case of the general phenomenon that the free $n$-disk algebra on the trivial $O(n)$-module $V$ is \emph{not} the $n$-enveloping algebra of the free Lie algebra on $V$. As the following proposition shows, the $O(n)$-action must be twisted to restore the equivalence.

\begin{proposition}\label{comparison}
Let $K$ be an $O(n)$-module. There is a natural equivalence $$\xymatrix{\mathbb{F}_n(K)\ar[r]^-\sim& U_n(\Lie((\mathbb{R}^n)^+\otimes K[-1]))}$$ of $n$-disk algebras, where $(\mathbb{R}^n)^+\otimes K$ carries the diagonal $O(n)$-action.
\end{proposition}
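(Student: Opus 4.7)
The plan is to mirror the proof of the preceding proposition, which established the framed analogue as a formal consequence of composing left adjoints and invoking uniqueness. Here, we work one level up: both $\mathbb{F}_n$ and $U_n \circ \Lie \circ T$ are composites of left adjoints $\Mod_{O(n)}(\Ch_\mathbb{Q}) \to \Alg_{\Disk_n}(\Ch_\mathbb{Q})$ for a suitable endofunctor $T$ of $\Mod_{O(n)}(\Ch_\mathbb{Q})$, so the proof reduces to identifying $T$ as the $O(n)$-equivariant analogue of the shift $[n-1]$ from the framed setting.

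The shift in the framed case reflects the fact that the Lie bracket on a $\Disk_n^{\fr}$-algebra $A$ is extracted from the fundamental class of $\Conf_2(\mathbb{R}^n) \simeq S^{n-1}$, which lives in degree $n-1$; explicitly, the right adjoint $\Prim_n^{\fr}$ to $U_n$ sends $A$ to the Lie algebra structure on $A[1-n]$. Incorporating $O(n)$-actions, this class is not fixed but generates the sign representation: as an $O(n)$-module, $H_{n-1}(S^{n-1}) \cong \mathbb{Q}^{\det}$. One checks that the bracket on $A$ is $O(n)$-equivariant only after twisting the target by $\det$, so that the equivariant primitives functor $\Prim_n(A)$ has underlying $O(n)$-module $A^{\det}[1-n]$. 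The correct $T$ is therefore inverse to $K \mapsto K^{\det}[1-n]$, namely $T(K) = K^{\det}[n-1]$. Identifying the reduced chains on $(\mathbb{R}^n)^+ \simeq S^n$ with $\mathbb{Q}^{\det}[n]$ as an $O(n)$-module---since $O(n)$ acts on the top-dimensional homology of $S^n$ by the sign of the determinant---we rewrite $T(K) \simeq (\mathbb{R}^n)^+ \otimes K[-1]$, and uniqueness of adjoints yields the claimed equivalence of $n$-disk algebras.

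Alternatively, one can proceed more directly: the preceding two propositions applied in tandem, together with the observation that $\underline{(\mathbb{R}^n)^+ \otimes K[-1]} \simeq \underline{K}[n-1]$, already identify both sides as framed $n$-disk algebras. It remains to upgrade this to an equivalence of $n$-disk algebras by comparing $O(n)$-actions, which can be carried out summand-by-summand using Proposition \ref{free identification} on one side and Proposition \ref{enveloping calculation 3} on the other. The main obstacle in either approach is the careful bookkeeping of the determinant twist: one must ensure that the $O(n)$-action on the suspension coordinates, arising from $\Conf_2(\mathbb{R}^n) \simeq S^{n-1}$ in $\mathbb{F}_n(K)$ and from $\Fr_{(\mathbb{R}^n)^+}$ in $U_n$, corresponds under the equivalence, and that the resulting comparison is natural at the $\infty$-categorical level rather than merely at the level of homotopy categories.
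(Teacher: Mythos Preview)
Your proposal has a genuine gap in both approaches, and it misses the key construction that the paper uses.

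Your adjoint-uniqueness approach requires that the equivariant $U_n$ of Definition \ref{enveloping definition} be a left adjoint from Lie algebras in $O(n)$-modules to $n$-disk algebras, with right adjoint having a specific underlying $O(n)$-module. But the paper explicitly declines to develop this: it remarks that a full discussion of the equivariant enveloping algebra is beyond its scope and instead \emph{defines} $U_n(\mathfrak{g})$ by the formula $C^\Lie(\Map^{O(n)}(\Fr_{(\mathbb{R}^n)^+},\mathfrak{g}))$. Without that adjunction, uniqueness of adjoints gives you nothing. (Incidentally, even in the framed case the paper places the Lie bracket on $A[n-1]$, not $A[1-n]$; your stated shift for $\Prim_n^{\fr}$ is backwards, though you recover the correct $T$ in the end.) Your alternative approach identifies both sides as framed $n$-disk algebras, but then tries to ``upgrade'' by comparing $O(n)$-actions summand-by-summand. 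This does not produce a map of $n$-disk algebras: an equivalence of underlying objects together with a check that two $O(n)$-actions agree does not yield a morphism in $\Alg_{\Disk_n}$, and Propositions \ref{free identification} and \ref{enveloping calculation 3} describe the objects, not a comparison map between them. You concede as much in your final paragraph, which is really an enumeration of what remains to be done rather than a completion of the argument.

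The paper's proof avoids both problems by constructing the comparison map directly from the universal property of $\mathbb{F}_n$. The point is that there is an explicit $O(n)$-module map
\[
K \longrightarrow \big((\mathbb{R}^n)^+\otimes K\big)^{(\mathbb{R}^n)^+},
\]
namely the unit of the tensor/cotensor adjunction, and one checks (on homology, using only the elementary fact that $O(n)$ acts on $H_n(S^n)$ via the determinant) that it is an equivalence. Composing with the inclusions into $\Lie$ and then $C^\Lie$ yields an $O(n)$-module map $K\to U_n(\Lie((\mathbb{R}^n)^+\otimes K[-1]))$, and hence, by freeness, a map of $n$-disk algebras $\mathbb{F}_n(K)\to U_n(\Lie((\mathbb{R}^n)^+\otimes K[-1]))$. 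With the map in hand, checking that it is an equivalence is now legitimately a framed question, and the two preceding propositions together with the non-equivariant splitting $\widetilde C_*((\mathbb{R}^n)^+;\mathbb{Q})\simeq\mathbb{Q}[n]$ finish the job. This is the idea your proposal is missing: rather than identifying two left adjoints abstractly, one writes down the map and checks it is an equivalence on underlying complexes.
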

\begin{proof}
First, we note that the unit $$\xymatrix{K\ar[r]&((\mathbb{R}^n)^+\otimes K)^{(\mathbb{R}^n)^+}}$$ of the tensor/cotensor adjunction is an equivalence of $O(n)$-modules. Indeed, it suffices to verify this in the case $K=\mathbb{Q}$, in which case the map induces the isomorphism $\mathbb{Q}\cong(\mathbb{Q}^{\det})^{\otimes 2}$ in homology. 

Now, composing this unit map with the natural inclusions $$\xymatrix{((\mathbb{R}^n)^+\otimes K)^{(\mathbb{R}^n)^+}\ar[r]&\Lie((\mathbb{R}^n)^+\otimes K[-1])^{(\mathbb{R}^n)^+}[1]\ar[r]& C^\Lie(\Lie((\mathbb{R}^n)^+\otimes K[-1])^{(\mathbb{R}^n)^+})}$$ of $O(n)$-modules, we obtain a map of $n$-disk algebras $$\xymatrix{\mathbb{F}_n(K)\ar[r]&U_n(\Lie((\mathbb{R}^n)^+\otimes K[-1]))}$$ from the universal property of the free algebra. It will suffice to show that this map is an equivalence upon passing to underlying $\Disk_n^\mathrm{fr}$-algebras, which follows from the previous two propositions and the (non-equivariant) equivalence $\mathbb{Q}[n]\simeq \widetilde{C}_*((\mathbb{R}^n)^+;\mathbb{Q})$.
\end{proof}

\begin{proof}[Proof of Theorem \ref{with grading}]
An equivalence of $n$-disk algebras induces an equivalence on passing to factorization homology. Using the indicated results, we obtain equivalences \begin{align*}
\bigoplus_{k\geq0}C_*(B_k(M);\mathbb{Q})&\simeq \int_M\mathbb{F}_n(\mathbb{Q}) & (\ref{homology})\\
&\simeq \int_MU_n(\Lie(\widetilde C_*((\mathbb{R}^n)^+)[-1]))& (\ref{comparison})\\
&\simeq\textstyle C^\Lie(\Map^{O(n)}(\Fr_{M^+},\Lie(\widetilde C_*((\mathbb{R}^n)^+)[-1])))&(\ref{enveloping calculation 3})\\
&\simeq\textstyle C^\Lie(\Map^{O(n)}(\Fr_{M^+},\Lie(\mathbb{Q}^{\det}[n-1])))&(\ref{equivariant formality})\\
&\simeq\textstyle C^\Lie(\Map^{C_2}(\widetilde{M^+},\Lie(\mathbb{Q}^{\sgn}[n-1])))\\
&\simeq\textstyle C^\Lie(H_c^{-*}(M,\Lie(\mathbb{Q}^{w}[n-1])))&(\ref{Lie formality}).
\end{align*}

Applying Proposition \ref{spectral sequence} to this equivalence of algebras, we obtain an isomorphism of spectral sequences. The weight and cardinality gradings of the two algebras pass to factorization homology, so that these spectral sequences are each trigraded. According to Proposition \ref{bigraded isomorphism}, the isomorphism preserves the extra grading on $E^2$ and hence on $E^\infty$.
\end{proof}

\subsection{Variations} In this section, we discuss the corresponding results for twisted homology and manifolds with boundary.

\begin{theorem}\label{twisted version} Let $M$ be an $n$-manifold.
\begin{enumerate}
\item If $n$ is even, there is an isomorphism of bigraded vector spaces $$\bigoplus_{k\geq0}H_*(B_k(M);\mathbb{Q}^w)\cong H^\Lie(H_{c}^{-*}(M;\Lie(\mathbb{Q}[n-1]))).$$
\item If $n$ is odd, there is an isomorphism of bigraded vector spaces $$\bigoplus_{k\geq0}H_*(B_k(M);\mathbb{Q}^w)[k]\cong H^\Lie(H_{c}^{-*}(M;\Lie(\mathbb{Q}[n]))).$$
\end{enumerate}
\end{theorem}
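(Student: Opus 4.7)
The plan is to mirror the chain of equivalences used in the proof of Theorem \ref{with grading}, replacing the starting Corollary \ref{homology} by the twisted analogue Proposition \ref{twisted homology}. Concretely, in the even case I would begin with the equivalence $\int_M \mathbb{F}_n(\mathbb{Q}^{\det}) \simeq \bigoplus_k C_*(B_k(M);\mathbb{Q}^w)$ supplied by Proposition \ref{twisted homology}(i); in the odd case the analogous starting point is $\int_M \mathbb{F}_n(\mathbb{Q}^{\det}[1]) \simeq \bigoplus_k C_*(B_k(M);\mathbb{Q}^w)[k]$ from Proposition \ref{twisted homology}(ii), which already accounts for the degree shift $[k]$ appearing on the left-hand side of the odd case of the theorem.

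From here I would apply Proposition \ref{comparison} (with $K = \mathbb{Q}^{\det}$ or $K = \mathbb{Q}^{\det}[1]$) to realize the integrand as an $n$-enveloping algebra, then invoke Proposition \ref{enveloping calculation 3} to rewrite the factorization homology as Chevalley--Eilenberg chains on $\Map^{O(n)}(\Fr_{M^+}, \Lie((\mathbb{R}^n)^+ \otimes K[-1]))$. The crucial observation is that the coefficient Lie algebra simplifies after applying the equivariant formality result \ref{equivariant formality}: since the reduced chains of $(\mathbb{R}^n)^+$ are equivalent to $\mathbb{Q}^{\det}[n]$ as $O(n)$-modules and $\mathbb{Q}^{\det} \otimes \mathbb{Q}^{\det} \simeq \mathbb{Q}$, one obtains $\Lie(\mathbb{Q}[n-1])$ in the even case and $\Lie(\mathbb{Q}[n])$ in the odd case, each carrying the \emph{trivial} $O(n)$-action.

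Because the coefficient Lie algebra now carries trivial $O(n)$-action, the mapping space descends along $\Fr_{M^+} \to M^+$ to an ordinary mapping space, with no need for the $C_2$-equivariant orientation-cover detour required in the untwisted proof. Proposition \ref{Lie formality} then identifies this with $H_c^{-*}(M; \Lie(\mathbb{Q}[n-1]))$, respectively $H_c^{-*}(M; \Lie(\mathbb{Q}[n]))$, again with untwisted coefficients, which is what the theorem asserts. Finally, the bigrading is extracted exactly as in the main proof via Proposition \ref{spectral sequence} and Proposition \ref{bigraded isomorphism}, since both the weight and cardinality gradings survive the chain of equivalences unchanged.

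The main obstacle, such as it is, is purely bookkeeping: one must verify that the $\det$ characters introduced by the twisted coefficient modules $\mathbb{Q}^{\det}$ and $\mathbb{Q}^{\det}[1]$ cancel precisely against those appearing in the equivariant homology of $(\mathbb{R}^n)^+$, so that no residual orientation twist remains on the Lie side. Once this cancellation is confirmed, the argument is a direct specialization of the proof of Theorem \ref{with grading} and requires no new conceptual input.
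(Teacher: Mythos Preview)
Your proposal is correct and follows essentially the same approach as the paper's proof: start from Proposition \ref{twisted homology}, run the chain of equivalences \ref{comparison}, \ref{enveloping calculation 3}, \ref{equivariant formality}, observe the cancellation $(\mathbb{Q}^{\det})^{\otimes 2}\simeq\mathbb{Q}$ that trivializes the $O(n)$-action, then apply \ref{Lie formality} and the bigrading argument. The paper does exactly this, writing out the even case explicitly and noting that the odd case is essentially identical.
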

\begin{proof}
We imitate the proof of Theorem \ref{with grading}. In the even case, we have \begin{align*}
\bigoplus_{k\geq0}C_*(B_k(M);\mathbb{Q}^w)&\simeq \int_M\mathbb{F}_n(\mathbb{Q}^{\det}) & (\ref{twisted homology})\\
&\simeq \int_MU_n(\Lie(\widetilde C_*((\mathbb{R}^n)^+)\otimes\mathbb{Q}^{\det}[-1]))& (\ref{comparison})\\
&\simeq\textstyle C^\Lie(\Map^{O(n)}(\Fr_{M^+},\Lie(\widetilde C_*((\mathbb{R}^n)^+)\otimes\mathbb{Q}^{\det}[-1])))&(\ref{enveloping calculation 3})\\
&\simeq\textstyle C^\Lie(\Map^{O(n)}(\Fr_{M^+},\Lie((\mathbb{Q}^{\det})^{\otimes 2}[n-1])))&(\ref{equivariant formality})\\
&\simeq\textstyle C^\Lie(\Map^{O(n)}(\Fr_{M^+},\Lie(\mathbb{Q}[n-1])))\\
&\simeq\textstyle C^\Lie(\Lie(\mathbb{Q}[n-1])^{M^+})\\
&\simeq\textstyle C^\Lie(H_c^{-*}(M,\Lie(\mathbb{Q}[n-1])))&(\ref{Lie formality}),
\end{align*}
and the odd case is essentially identical. The same argument as in the proof of Theorem \ref{with grading} shows that the resulting isomorphism is bigraded.
\end{proof}

Now, if $M$ is a manifold with boundary, then $B_k(M)\simeq B_k(\mathring{M})$, since configuration spaces are isotopy functors. A more interesting configuration space in this context is the \emph{relative configuration space} $$B_k(M,\partial M):=\frac{B_k(M)}{\{(x_1,\ldots, x_k)\mid \exists i: x_i\in\partial M\}}.$$

From the point of view of factorization homology, the natural setting in which to study these spaces is that of the \emph{zero-pointed manifolds} of \cite{AFZ}, a class of pointed spaces that are manifolds away from the basepoint. Indeed, if $M$ is a manifold with boundary, then $M/\partial M$ is naturally a zero-pointed manifold.

The algebraic counterpart of a basepoint is an \emph{augmentation}.

\begin{definition}
An \emph{augmented $n$-disk algebra} is an $n$-disk algebra $A$ together with a map of $n$-disk algebras $\epsilon:A\to \mathbb{Q}$.
\end{definition}

\begin{example}
The free $n$-disk algebra $\mathbb{F}_n(K)$ is naturally augmented via the unique map of $O(n)$-modules $K\to 0$.
\end{example}

\begin{example}
The $n$-enveloping algebra $U_n(\mathfrak{g})$ is naturally augmented via the unique map of Lie algebras $\mathfrak{g}\to 0$.
\end{example}

The theory of factorization homology for zero-pointed $n$-manifolds with coefficients in augmented $n$-disk algebras is expounded at length in \cite{AFZ} and \cite{AF}. For us, what is important is that, if $M$ is a manifold with boundary, then the factorization homology of $M/\partial M$ is defined for any choice of augmented $n$-disk algebra; moreover, if $\partial M=\varnothing$, then $M/\partial M=M_+$, and the factorization homology of the zero-pointed manifold $M_+$ with coefficients in $\epsilon:A\to \mathbb{Q}$ is equivalent to the factorization homology of $M$ with coefficients in $A$ defined previously. 

Our arguments go through in this more general context.

\begin{theorem}\label{boundary case}
Let $M$ be an $n$-manifold with boundary. There is an isomorphism of bigraded vector spaces $$\bigoplus_{k\geq0}\widetilde{H}_*(B_k(M,\partial M);\mathbb{Q})\cong H^\Lie(H_{c}^{-*}(M;\Lie(\mathbb{Q}^w[n-1]))).$$
\end{theorem}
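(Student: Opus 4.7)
The plan is to mimic the proof of Theorem \ref{with grading} verbatim, replacing the closed manifold $M$ throughout by the zero-pointed manifold $M/\partial M$ and using that $\mathbb{F}_n(\mathbb{Q})$ and $U_n(\mathfrak{g})$ are canonically augmented. The payoff is that all of the configuration-space and enveloping-algebra calculations should admit zero-pointed analogues in which basepoints track the role of $\partial M$.

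First I would establish the relative analogue of Corollary \ref{homology}: for the augmented $n$-disk algebra $\mathbb{F}_n(\mathbb{Q})\to\mathbb{Q}$ one expects
$$\int_{M/\partial M}\mathbb{F}_n(\mathbb{Q})\simeq\bigoplus_{k\geq 0}\widetilde{C}_*(B_k(M,\partial M);\mathbb{Q}),$$
obtained by running through the zero-pointed free-algebra formula of \cite{AF} just as in Proposition \ref{free calculation}. The appearance of the \emph{relative} configuration space is natural: points that flow into the collapsed boundary should be annihilated by the augmentation, so the colimit over $\mathrm{Disj}(M/\partial M)$ of $\mathbb{F}_n(\mathbb{Q})$ collapses precisely the subspace of configurations meeting $\partial M$.

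Next, invoking Proposition \ref{comparison} as augmented $n$-disk algebras gives $\mathbb{F}_n(\mathbb{Q})\simeq U_n(\Lie((\mathbb{R}^n)^+[-1]))$, and the zero-pointed generalization of Proposition \ref{enveloping calculation 3} should compute
$$\int_{M/\partial M}U_n(\mathfrak{g})\simeq C^\Lie(\Map^{O(n)}(\Fr_{(M/\partial M)},\mathfrak{g})),$$
where $\Fr_{(M/\partial M)}$ is the frame bundle of the zero-pointed manifold $M/\partial M$ in the sense of \cite{AFZ}, namely the cofiber $\Fr_M/\Fr_M|_{\partial M}$. The remaining identifications are formal recapitulations of the proof of Theorem \ref{with grading}: the equivariant formality statement (Proposition \ref{equivariant formality}) converts $\Lie(\widetilde{C}_*((\mathbb{R}^n)^+)[-1])$ into $\Lie(\mathbb{Q}^{\det}[n-1])$, reduction to the determinant gives a $C_2$-equivariant mapping complex out of the orientation double cover of $M/\partial M$, and Proposition \ref{Lie formality} replaces the mapping complex by cohomology, yielding
$$C^\Lie\bigl(H_c^{-*}(\mathring M;\Lie(\mathbb{Q}^w[n-1]))\bigr)$$
via the identification $\widetilde H^{-*}(M/\partial M;\mathbb{Q}^w)\cong H_c^{-*}(\mathring M;\mathbb{Q}^w)$. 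Taking homology and applying the spectral sequence of Proposition \ref{spectral sequence} together with the bigrading comparison (Proposition \ref{bigraded isomorphism}) preserves the cardinality/weight grading, giving the stated bigraded isomorphism.

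The main obstacle I anticipate is technical rather than conceptual: carefully verifying that the zero-pointed versions of Proposition \ref{free calculation} and Proposition \ref{enveloping calculation 3} really do output the \emph{relative} configuration space and the \emph{compactly supported} mapping complex on the nose. In particular, one must check that the relevant frame-bundle cofiber $\Fr_M/\Fr_M|_{\partial M}$ plays the role of $\Fr_{M^+}$ in the colimit calculation, and that the augmentations on $\mathbb{F}_n(\mathbb{Q})$ and $U_n(\Lie(\cdots))$ match under the comparison of Proposition \ref{comparison}. Once these bookkeeping steps are in place, the rest of the argument is a line-by-line port of the proof of Theorem \ref{with grading}.
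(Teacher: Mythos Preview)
Your proposal is correct and follows essentially the same approach as the paper: the paper's proof likewise replaces $M$ by the zero-pointed manifold $M/\partial M$, invokes \cite[2.4.1]{AF} for the relative free-algebra calculation, observes that the map of Proposition \ref{comparison} is a map of augmented $n$-disk algebras, and notes that the proof of Proposition \ref{enveloping calculation 3} translates verbatim into the zero-pointed setting, after which the remainder of the argument for Theorem \ref{with grading} goes through unchanged. Your anticipated ``obstacles'' are exactly the bookkeeping points the paper addresses in one line each.
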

\begin{proof}
We explain the adjustments necessary in the proof of Theorem \ref{with grading}. First, \cite[2.4.1]{AF} guarantees the equivalence $$\bigoplus_{k\geq0}\widetilde{C}_*(B_k(M,\partial M);\mathbb{Q})\simeq \int_{M/\partial M}\mathbb{F}_n(\mathbb{Q}).$$ Second, it is immediate from its definition that the map of Proposition \ref{comparison} is a map of augmented $n$-disk algebras, so that $$\int_{M/\partial M}\mathbb{F}_n(\mathbb{Q}) \simeq \int_{M/\partial M}U_n(\Lie(\widetilde C_*((\mathbb{R}^n)^+)[-1])).$$ The proof of Proposition \ref{enveloping calculation 3} translates verbatim into the zero-pointed context, so that we have the further equivalence $$\int_{M/\partial M}U_n(\Lie(\widetilde C_*((\mathbb{R}^n)^+)[-1])) \simeq\textstyle C^\Lie(\Map^{O(n)}(\Fr_{M^+},\Lie(\widetilde C_*((\mathbb{R}^n)^+)[-1]))).$$ The remainder of the proof goes through unchanged.
\end{proof}

\begin{remark}
When $M$ has boundary, there are two obvious candidates for the orientation sheaf of $M$, namely the ordinary and the exceptional pushforwards of the orientation sheaf of the interior of $M$. We intend the former here.
\end{remark}

\subsection{Formulas}\label{formulas} In this section, we use Theorem \ref{with grading} and the Chevalley-Eilenberg complex to reproduce and extend the classical results on the rational homology of configuration spaces alluded to in the introduction. 

We remind the reader that the free Lie algebra on $\mathbb{Q}^w[r]$ is given as a graded vector space by $$\Lie(\mathbb{Q}^w[r])\cong\begin{cases}
\mathbb{Q}^w[r]\oplus\mathbb{Q}[2r]\quad& r\text{ odd}\\
\mathbb{Q}^w[r]\quad &r\text{ even.}
\end{cases}$$ When $r$ is odd, the only nonvanishing bracket is the isomorphism $(\mathbb{Q}^w[r])^{\otimes 2}\cong\mathbb{Q}[2r]$.

\begin{corollary}
If $n$ is odd, there is an isomorphism $$H_*(B_k(M);\mathbb{Q})\cong \Sym^k(H_*(M;\mathbb{Q})).$$
\end{corollary}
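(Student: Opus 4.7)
The plan is to read off this corollary from Theorem \ref{with grading} by exploiting the fact that, in odd dimension, the relevant free Lie algebra degenerates. Since $n$ is odd, $r=n-1$ is even, and the formula recalled just above the corollary gives $\Lie(\mathbb{Q}^w[n-1]) \cong \mathbb{Q}^w[n-1]$ as a graded vector space, with trivial bracket (a bracket on a single generator in even degree would have to be graded-symmetric, contradicting graded antisymmetry). Hence $\Lie(\mathbb{Q}^w[n-1])$ is an \emph{abelian} graded Lie algebra.

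Next I would observe that since $H_c^{-*}(M;-)$ is linear in its coefficient system, the object $H_c^{-*}(M;\Lie(\mathbb{Q}^w[n-1])) \cong H_c^{-*}(M;\mathbb{Q}^w)[n-1]$ is also an abelian graded Lie algebra, with zero internal differential since it has already been passed to (co)homology. For such an object $V$, inspection of the Chevalley-Eilenberg complex $CE(V)=(\Sym(V[1]),d_V+D)$ shows that both $d_V$ and the coderivation $D$ vanish (the former by hypothesis, the latter because $D$ is built out of the vanishing bracket). Consequently Lie homology reduces to the underlying graded symmetric algebra,
$$H^\Lie(H_c^{-*}(M;\Lie(\mathbb{Q}^w[n-1]))) \cong \Sym\bigl(H_c^{-*}(M;\mathbb{Q}^w)[n]\bigr).$$

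Now I would invoke Poincaré duality for possibly non-orientable manifolds, namely $H_c^{n-j}(M;\mathbb{Q}^w) \cong H_j(M;\mathbb{Q})$. In the paper's sign conventions, this translates to a degree-preserving identification $H_c^{-*}(M;\mathbb{Q}^w)[n] \cong H_*(M;\mathbb{Q})$, so that the displayed isomorphism becomes $\Sym(H_*(M;\mathbb{Q}))$.

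Finally, the cardinality grading on the left-hand side of Theorem \ref{with grading} corresponds on the right to the weight grading by powers of the Lie generator, which in this abelian case is just the grading of $\Sym$ by symmetric power. Extracting the summand of weight $k$ yields $H_*(B_k(M);\mathbb{Q}) \cong \Sym^k(H_*(M;\mathbb{Q}))$, as claimed. The only real bookkeeping hazard is keeping the degree shifts consistent when applying Poincaré duality in the paper's negative-cohomological grading, but no substantive obstacle stands in the way.
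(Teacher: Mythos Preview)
Your argument is correct and follows exactly the same route as the paper: use that for $n$ odd the free Lie algebra is abelian, so the Chevalley--Eilenberg differential vanishes and Lie homology is $\Sym(H_c^{-*}(M;\mathbb{Q}^w)[n])$, then apply Poincar\'e duality and match the weight grading with the symmetric-power grading. The only difference is that you spell out the degree bookkeeping more explicitly than the paper does.
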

\begin{proof}
Since $n$ is odd, the Lie algebra in question is abelian, so that the Chevalley-Eilenberg complex has no differential, and the weight grading coincides with the usual grading of the symmetric algebra. The claim follows after replacing shifted, twisted, compactly supported cohomology with homology using Poincar\'{e} duality.
\end{proof}

This result is Theorem C of \cite{BCT} as formulated in dual form in Theorem 4 of \cite{FelixTanre}, in which the isomorphism on cohomology is shown to be an isomorphism of algebras.

\begin{corollary}\label{even corollary}
If $n$ is even, $H_*(B_k(M);\mathbb{Q})$ is isomorphic to the homology of the complex $$\bigg(\bigoplus_{i=0}^{\lfloor\frac{k}{2}\rfloor}\Sym^{k-2i}(H^{-*}_c(M;\mathbb{Q}^w)[n])\otimes\Sym^{i}(H^{-*}_c(M;\mathbb{Q})[2n-1]),\,D\bigg),$$ where the differential $D$ is defined as a coderivation by the equation $$D(\sigma^n\alpha\wedge\sigma^n\beta)=(-1)^{(n-1)|\beta|}\sigma^{2n-1}(\alpha\smile\beta).$$
\end{corollary}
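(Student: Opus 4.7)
The plan is to apply Theorem \ref{with grading} and unpack the right-hand side using the fact that $\Lie(\mathbb{Q}^w[n-1])$ is two-dimensional when $n$ is even.

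Since $n$ is even, the generator $\mathbb{Q}^w[n-1]$ is concentrated in odd degree $n-1$. By the formula recalled immediately before the statement, $\Lie(\mathbb{Q}^w[n-1])\cong\mathbb{Q}^w[n-1]\oplus\mathbb{Q}[2n-2]$, with the sole nonzero bracket the canonical isomorphism $(\mathbb{Q}^w[n-1])^{\otimes 2}\cong\mathbb{Q}[2n-2]$. Applying $H^{-*}_c(M;-)$, we obtain
$$\mathfrak{g}:=H^{-*}_c(M;\Lie(\mathbb{Q}^w[n-1]))\cong H^{-*}_c(M;\mathbb{Q}^w)[n-1]\oplus H^{-*}_c(M;\mathbb{Q})[2n-2],$$
whose only nontrivial bracket is induced by the cup product pairing $H^{-*}_c(M;\mathbb{Q}^w)^{\otimes 2}\to H^{-*}_c(M;\mathbb{Q})$.

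Next, I would compute $C^\Lie(\mathfrak{g})=(\Sym(\mathfrak{g}[1]),D)$. Because $\Sym$ converts direct sums into tensor products, the underlying graded vector space is
$$\Sym(H^{-*}_c(M;\mathbb{Q}^w)[n])\otimes\Sym(H^{-*}_c(M;\mathbb{Q})[2n-1]),$$
in which generators $\sigma^n\alpha$ inherit weight $1$ and generators $\sigma^{2n-1}\gamma$ inherit weight $2$ from the weight grading on $\mathfrak{g}$. Thus the weight-$k$ summand is exactly the direct sum in the statement, with $0\leq i\leq\lfloor k/2\rfloor$ counting generators of weight $2$. Since $D$ is built from the bracket it preserves weight, so this summand is a subcomplex, and by Theorem \ref{with grading} its homology is $H_*(B_k(M);\mathbb{Q})$.

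Finally, I would compute $D$ explicitly. Brackets involving a weight-$2$ generator vanish, since they would land in weights $\geq 3$ of $\Lie(\mathbb{Q}^w[n-1])$, which are zero; hence $D$ is nontrivial only on a pair of weight-$1$ generators. Its value there is determined as a coderivation by the universal formula $D(\sigma x\wedge\sigma y)=(-1)^{|x|}\sigma[x,y]$; combining this with the tensor-product Lie bracket formula $[\alpha\otimes v,\beta\otimes w]=(-1)^{|v||\beta|}(\alpha\smile\beta)\otimes[v,w]$ and the identification of $[v,v]$ with the canonical generator of the weight-$2$ summand, a short Koszul-sign bookkeeping produces the claimed $(-1)^{(n-1)|\beta|}\sigma^{2n-1}(\alpha\smile\beta)$. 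This sign verification is the only nontrivial step; otherwise the corollary is immediate by substitution into Theorem \ref{with grading}.
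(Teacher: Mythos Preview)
Your proposal is correct and follows essentially the same route as the paper: apply Theorem~\ref{with grading}, expand $\Lie(\mathbb{Q}^w[n-1])$ as $\mathbb{Q}^w[n-1]\oplus\mathbb{Q}[2n-2]$, identify $\Sym(\mathfrak{g}[1])$ with the tensor product of symmetric algebras, read off the weight-$k$ subcomplex, and observe that the Chevalley--Eilenberg differential is governed by the cup-product bracket with Koszul signs. The paper's proof is in fact terser than yours---it simply points to ``the usual Koszul rule of signs'' where you sketch the sign bookkeeping---so your version is, if anything, slightly more explicit.
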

\begin{proof}
It suffices by Theorem \ref{with grading} to identify the complex in question with the weight $k$ part of the Chevalley-Eilenberg complex for $\mathfrak{g}=H_c^{-*}(M;\Lie(\mathbb{Q}^w[n-1]))$, which as a graded vector space is given by $$\Sym(\mathfrak{g}[1])\cong\Sym(H_c^{-*}(M;\mathbb{Q}^w)[n])\otimes \Sym(H_c^{-*}(M;\mathbb{Q})[2n-1]),$$ with differential determined as a coderivation by the bracket of $\mathfrak{g}$, which is none other than the shifted cup product shown above, with the sign determined by the usual Koszul rule of signs. Since the cogenerators of the first tensor factor have weight 1 and those of the second tensor factor weight 2, the subcomplex of total weight $k$ is exactly the sum shown above. 
\end{proof}

When $M$ is closed, orientable, and nilpotent, we recover the linear and Poincar\'{e} dual of Theorem A of \cite{FT}, as formulated in Theorem 1 of \cite{FelixTanre}. When $M$ is a once-punctured surface, we recover Theorem C of \cite{BC}.

\begin{remark} The proofs of Theorem 1 and the even-dimensional half of Theorem 3 of \cite{FelixTanre} rely crucially on the results of \cite{FT} and thereby on the hypotheses of compactness, orientability, and nilpotence. At the time of writing, these hypotheses do not appear in the statements of the theorems. 

It follows from our results, however, that these theorems are true at the stated level of generality. Indeed, by Theorem 6 of \cite{FelixTanre}, the $\Sigma_k$-invariants of the $E_1$ page of the Cohen-Taylor-Totaro spectral sequence (see \cite{CohenTaylor} and \cite{Totaro}) coincide with the linear dual of the complex exhibited in Corollary \ref{even corollary}.

An analogous spectral sequence in the nonorientable case, possibly with twisted coefficients, is available due to \cite{Getzler2step} and \cite{GetzlerHodge} (see also \cite{Petersen}).
\end{remark}

We leave it to the reader to formulate the analogous results on twisted homology and those concerning the homology of the relative configuration spaces $B_k(M,\partial M)$, which follow in the same way from Theorems \ref{twisted version} and \ref{boundary case}, respectively. To the author's knowledge, the computation in the twisted case is new in all cases except when $M$ is orientable and $n$ is even, so that $B_k(M)$ is orientable, and the computation in the relative case is new in all cases except when $\partial M=\varnothing$.

\section{Coalgebraic Structure}\label{comultiplicative structure}

\subsection{Primitives and weight} Our present goal is to supply the first of the missing ingredients in the proof of the main theorem, namely the identification of the cardinality and weight gradings at the level of homology (see Definition \ref{cardinality grading} and the end of \S\ref{Lie algebras} for definitions of these gradings). We make this identification locally on $M$ in this section and globalize in the following section using a spectral sequence argument.

Let $K$ be an $O(n)$-module. We define the following maps: \begin{enumerate}
\item $\iota:K\to \mathbb{F}_n(K)$ is the map of $O(n)$-modules given by the unit of the free/forgetful adjunction;
\item $\eta:\mathbb{Q}\to \mathbb{F}_n(K)$ is the unit of $\mathbb{F}_n(K)$ as an $n$-disk algebra; 
\item $\delta:\mathbb{F}_n(K)\to \mathbb{F}_n(K)\otimes\mathbb{F}_n(K)$ is the map of $n$-disk algebras induced by the composite $$\xymatrix{K\ar[rr]^-\Delta&&K\oplus K\ar[rr]^-{\eta\otimes\iota+\iota\otimes\eta}&&\mathbb{F}_n(\mathbb{Q})\otimes\mathbb{F}_n(\mathbb{Q})},$$ where $\Delta$ is the diagonal and we have tacitly employed the canonical identifications $K\otimes\mathbb{Q}\cong K\cong\mathbb{Q}\otimes K$;
\item $\delta_M$ is the map on factorization homology induced by $\delta$ (resp. $\eta_M$, $\eta$).
\end{enumerate}

Note that we have suppressed the choice of $K$ from the notation. 

Although we will only use the case $M=\mathbb{R}^n$ here, we record the following result for its inherent interest:

\begin{proposition}
The maps $H(\delta_M)$ and $H(\eta_M)$ endow $H\bigg(\displaystyle\int_M\mathbb{F}_n(K)\bigg)$ with the structure of a coaugmented cocommutative coalgebra.
\end{proposition}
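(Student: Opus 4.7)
The plan is to establish the coalgebra structure at the level of the $n$-disk algebra $\mathbb{F}_n(K)$ itself and then propagate it through factorization homology and finally through the homology functor.

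First, I would verify that $(\mathbb{F}_n(K), \delta, \eta, \epsilon)$ is a coaugmented cocommutative coalgebra object in the symmetric monoidal $\infty$-category $(\Alg_{\Disk_n}(\Ch_\mathbb{Q}), \otimes)$, where the augmentation $\epsilon \colon \mathbb{F}_n(K) \to \mathbb{F}_n(0) \simeq \mathbb{Q}$ is induced by the zero map of $O(n)$-modules $K \to 0$. Each coalgebra axiom is a comparison of two maps of $n$-disk algebras out of the free algebra $\mathbb{F}_n(K)$, and by the free/forgetful adjunction each such comparison reduces to an identity of $O(n)$-module maps out of $K$. On a generator $v \in K$ the structure maps take the primitive values $\delta(v) = v \otimes 1 + 1 \otimes v$ and $\epsilon(v) = 0$, so the required identities are immediate; for instance, $(\delta \otimes 1)\delta$ and $(1 \otimes \delta)\delta$ both send $v$ to $v \otimes 1 \otimes 1 + 1 \otimes v \otimes 1 + 1 \otimes 1 \otimes v$, and $(\epsilon \otimes 1)\delta$ sends $v$ to $v$ under the canonical identification $\mathbb{Q} \otimes \mathbb{F}_n(K) \cong \mathbb{F}_n(K)$.

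Next, I would apply $\int_M(-)$. The key input is that, for fixed $M$, factorization homology is strong symmetric monoidal in its coefficient variable: the comparison map $\int_M A \otimes \int_M B \to \int_M(A \otimes B)$ is an equivalence. Indeed, each side is a colimit indexed on $\Disk_{n/M}$, which is sifted by Francis 3.22 (cited in the paper), so the diagonal $\Disk_{n/M} \to \Disk_{n/M} \times \Disk_{n/M}$ is cofinal, and distributivity of $\otimes$ over sifted colimits in $\Ch_\mathbb{Q}$ identifies the two colimits. Applying this symmetric monoidal functor to the coalgebra from the previous step yields a coaugmented cocommutative coalgebra structure on $\int_M \mathbb{F}_n(K)$ in $(\Ch_\mathbb{Q}, \otimes)$, with comultiplication $\delta_M$ and coaugmentation $\eta_M$. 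Finally, the homology functor is symmetric monoidal by the K\"unneth theorem in characteristic zero, so it carries the coalgebra to a coaugmented cocommutative coalgebra in graded vector spaces, with structure maps $H(\delta_M)$ and $H(\eta_M)$.

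The main obstacle is the $\infty$-categorical coherence lurking in the first step: the coalgebra axioms are a priori higher coherence data in mapping spaces of $n$-disk algebras rather than strict equalities of chain maps. The free/forgetful adjunction resolves this cleanly by identifying the mapping space $\Map(\mathbb{F}_n(K), A^{\otimes r})$ with its counterpart in $O(n)$-modules out of the generating object $K$, where all required coherences can be read off directly from explicit formulas on generators.
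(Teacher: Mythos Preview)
Your proposal is correct and follows essentially the same approach as the paper: reduce to the case $M=\mathbb{R}^n$ by symmetric monoidality of $\int_M(-)$ in the algebra variable, and there verify each coalgebra axiom using the universal property of the free algebra to reduce comparisons of $n$-disk algebra maps out of $\mathbb{F}_n(K)$ to comparisons of $O(n)$-module maps out of $K$. The paper cites \cite[5.5.3.2]{HA} for the monoidality where you give a direct siftedness argument, and it packages the coassociativity check as a cubical diagram rather than a formula on generators, but these are cosmetic differences.
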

\begin{proof}
The functor $\int_M$ is symmetric monoidal in the algebra variable by \cite[5.5.3.2]{HA}, so it suffices to verify the claim in the case $M=\mathbb{R}^n$. The required axioms all follow from the universal property of the free algebra; we spell out the argument for coassociativity, leaving the remainder to the reader. 

Consider the cubical diagram $$\xymatrix{&K\ar[rr]^<<<<<<<<<<<<<<<<<<<<<<<<<<\Delta\ar[dd]_>>>>>>>>>{\Delta}|!{[d]}\hole\ar[dl]_-\eta\ar[rr]&&K\oplus K\ar[dd]^{\Delta\oplus1}\ar[dl]_-{\eta\otimes\iota+\iota\otimes\eta\quad}\\
\mathbb{F}_n(K)\ar[rr]^<<<<<<<<<<<<<<<<<<<<<<<<<<<<<<<\delta\ar[dd]_-{\delta}&& \mathbb{F}_n(K)\otimes\mathbb{F}_n(K)\ar[dd]^>>>>>>>>>>>>>>>{\delta\otimes1}\\
&K\oplus K\ar[dl]_{\eta\otimes\iota+\iota\otimes\eta\,\,\,\,}\ar[rr]^<<<<<<<<<<<<<<<<<<<<<<{1\oplus\Delta}|!{[r]}\hole&&K\oplus K\oplus K\ar[dl]^{\qquad\qquad\eta\otimes\iota\otimes\iota+\iota\otimes\eta\otimes\iota+\iota\otimes\iota\otimes\eta}
\\
\mathbb{F}_n(K)\otimes\mathbb{F}_n(K)\ar[rr]^-{1\otimes\delta}&&\mathbb{F}_n(K)\otimes\mathbb{F}_n(K)\otimes\mathbb{F}_n(K).
}$$ It will suffice to show that the square diagram given by the front face of the cube commutes in the $\infty$-category of $n$-disk algebras, since this square witnesses coassociativity after applying factorization homology and passing to the homotopy category of chain complexes. Applying the universal property of the free algebra, the required commutativity is equivalent to commutativity as a diagram of $O(n)$-modules after precomposing with $\eta$. 
By a standard diagram chase, it suffices to verify that the remaining five faces each commute. \begin{itemize}
\item The left and top face commute by the definition of $\delta$.
\item The back face commutes by the universal property of the direct sum, considered as the $\infty$-categorical product.
\item The right and bottom face commute by the definition of $\delta$ and the universal property of the direct sum, considered as the $\infty$-categorical coproduct.
\end{itemize}
\end{proof}

Although we have defined this coalgebra structure in abstract terms, it has an appealing geometric interpretation, which we discuss in \S\ref{splitting points} below. 

When $M=\mathbb{R}^n$, the same homology is also an algebra, and even commutative for $n\geq2$. Since $\delta$ is a map of $n$-disk algebras, $H(\mathbb{F}_n(\mathbb{Q}))$ inherits the structure of a \emph{bialgebra}, and in fact a Hopf algebra, although we will not make use of the antipode.

For the duration of this section, we make the abbreviation $\mathfrak{g}(K):=\Lie((\mathbb{R}^n)^+\otimes K[-1])^{(\mathbb{R}^n)^+}$.

\begin{proposition}\label{coalgebra iso}
The isomorphism on homology induced by the equivalence of Proposition \ref{comparison} is an isomorphism of bialgebras.
\end{proposition}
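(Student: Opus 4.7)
The plan is to use the universal property of the free $n$-disk algebra to reduce the coalgebra comparison to a check on the generating $O(n)$-module $K$. Since the equivalence of Proposition \ref{comparison} is an equivalence of $n$-disk algebras, the induced isomorphism on homology is automatically an isomorphism with respect to the Pontryagin-type algebra structure; only the coproduct requires attention.

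Writing $\mathfrak{g} := \Lie((\mathbb{R}^n)^+ \otimes K[-1])^{(\mathbb{R}^n)^+}$, I would first observe that both candidate coproducts are in fact maps of $n$-disk algebras before taking homology. On the source, this is the definition of $\delta$. On the target, the Lie algebra diagonal $\mathfrak{g} \to \mathfrak{g} \oplus \mathfrak{g}$ is a Lie homomorphism, so applying $U_n = C^\Lie$ and composing with the symmetric monoidal equivalence $U_n(\mathfrak{g} \oplus \mathfrak{g}) \simeq U_n(\mathfrak{g}) \otimes U_n(\mathfrak{g})$ produces a map of $n$-disk algebras. Consequently, by the universal property of $\mathbb{F}_n$, it suffices to show that the two candidate coproducts agree as $O(n)$-module maps from $K$ to $\mathbb{F}_n(K) \otimes \mathbb{F}_n(K)$.

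On the source side, $\delta|_K = \eta \otimes \iota + \iota \otimes \eta$ sends $k \mapsto 1 \otimes k + k \otimes 1$ by definition. On the target side, tracking the construction of Proposition \ref{comparison}, the composite $K \to U_n(\mathfrak{g})$ factors through the linear inclusion $\mathfrak{g}[1] \hookrightarrow \Sym(\mathfrak{g}[1]) = C^\Lie(\mathfrak{g})$; and the Chevalley-Eilenberg coproduct, induced by the diagonal $x \mapsto (x,x)$ on $\mathfrak{g}$, carries each $\sigma x \in \mathfrak{g}[1]$ to the primitive $\sigma x \otimes 1 + 1 \otimes \sigma x$ under the identification $\Sym(\mathfrak{g}[1] \oplus \mathfrak{g}[1]) \cong \Sym(\mathfrak{g}[1])^{\otimes 2}$. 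Thus both restrictions agree, and the universal-property argument closes up.

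The main obstacle I anticipate is bookkeeping rather than substance: one must verify that the abstract cocommutative coalgebra structure on $U_n(\mathfrak{g})$ defined via the monadic bar construction really is computed by the Lie diagonal on the Chevalley-Eilenberg model (as recorded in the discussion preceding Theorem \ref{enveloping calculation 1}) and that the intermediate $O(n)$-equivariant identifications used in the construction of the comparison map are coherent with these structures. Once these are in hand, the generator-level check above, which amounts to identifying the image of $K$ with primitives on both sides, finishes the argument.
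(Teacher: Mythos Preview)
Your proposal is correct and follows essentially the same route as the paper: observe that $\varphi$ is already an algebra map, note that both coproducts are maps of $n$-disk algebras, and then invoke the universal property of $\mathbb{F}_n(K)$ to reduce to checking the two composites $K\to \mathbb{F}_n(K)^{\otimes 2}$ agree as $O(n)$-module maps, which amounts exactly to the primitive computation you describe. The only difference is packaging: the paper organizes the generator-level verification as the commutativity of the five remaining faces of a cube, whereas you phrase it as a direct comparison of $\delta|_K$ with the Chevalley--Eilenberg coproduct on $\mathfrak{g}[1]$.
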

\begin{proof}
Denote by $\varphi$ the equivalence $$\xymatrix{\mathbb{F}_n(K)\ar[r]^-\sim&C^\Lie(\mathfrak{g}(K))}$$ of Proposition \ref{comparison}. Since $\varphi$ is a map of $n$-disk algebras, the induced map on homology is a map of algebras; therefore, it will suffice to show that this map is also a map of coalgebras.

Consider the cubical diagram $$\xymatrix{&K\ar[rr]\ar[dd]_>>>>>>>>>{\Delta}|!{[d]}\hole\ar[dl]_-\eta\ar[rr]&&\mathfrak{g}(K)[1]\ar[dd]^{\Delta}\ar[dl]\\
\mathbb{F}_n(K)\ar[rr]^<<<<<<<<<<<<<<<<<<<<<<<<<<<<<<<\varphi\ar[dd]_-{\delta}&& C^\Lie(\mathfrak{g}(K))\ar[dd]^>>>>>>>>>>>>>>>{\gamma}\\
&K\oplus K\ar[dl]_{\eta\otimes\iota+\iota\otimes\eta\,\,\,\,}\ar[rr]|!{[r]}\hole&&\mathfrak{g}(K)[1]\oplus\mathfrak{g}(K)[1]\ar[dl]
\\
\mathbb{F}_n(K)\otimes\mathbb{F}_n(K)\ar[rr]^-{\varphi\otimes\varphi}&&C^\Lie(\mathfrak{g}(K))\otimes C^\Lie(\mathfrak{g}(K)),
}$$ where $\gamma$ denotes the comultiplication on Lie algebra chains. As before, we wish to show that the front face commutes in the $\infty$-category of $n$-disk algebras, and, as before, this reduces to checking the commutativity of the remaining five faces in the $\infty$-category of $O(n)$-modules. \begin{itemize}
\item The left face commutes by the definition of $\delta$.
\item The back face commutes by functoriality of the diagonal.
\item The top face commutes by the definition of $\varphi$.
\item The bottom face commutes by the definition of $\varphi$ and the universal property of the direct sum, considered as the categorical coproduct.
\item The right face commutes because the functor $C^\Lie$ is Cartesian monoidal.
\end{itemize}
\end{proof}

This bialgebra is a familiar one, and the various components of its structure interact predictably with the bigradings.

\begin{proposition}
\begin{enumerate}
\item There are isomorphisms $$H^\Lie(\mathfrak{g}(K))\cong \Sym(H(\mathfrak{g}(K))[1])\cong H(\mathbb{F}_n(K))$$ of graded bialgebras, where $\Sym$ is equipped with the standard product and coproduct.
\item The product of $H(\mathbb{F}_n(K))$ preserves the cardinality grading.
\item The coproduct of $H(\mathbb{F}_n(K))$ preserves the cardinality grading.
\item The product of $H^\Lie(\mathfrak{g}(K))$ preserves the weight grading.
\item The coproduct of $H^\Lie(\mathfrak{g}(K))$ preserves the weight grading.
\end{enumerate}
\end{proposition}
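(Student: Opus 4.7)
The plan is to address parts (ii)--(v) as direct bookkeeping and focus the main effort on (i).

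For (ii) and (iii): the cardinality grading on $\mathbb{F}_n(K)$ is given by the direct-sum decomposition of Proposition \ref{free identification}, and the $n$-disk algebra multiplication, being the disk-concatenation pairing combined with the tensor $K^{\otimes i}\otimes K^{\otimes j}\to K^{\otimes(i+j)}$, takes bi-cardinality $(i,j)$ to cardinality $i+j$; this passes to homology. The comultiplication $\delta$ is the unique $n$-disk algebra extension of the generator map $x\mapsto\eta\otimes\iota(x)+\iota(x)\otimes\eta$, whose image lies in the total-cardinality-$1$ subspace of $\mathbb{F}_n(K)^{\otimes 2}$; since multiplication preserves cardinality, so does $\delta$ globally.

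For (iv) and (v): the free Lie algebra $\Lie(V)$ carries a canonical weight decomposition by tensor degree in $V$, preserved by the cotensor construction, equipping $\mathfrak{g}(K)$ with a weight grading. On $CE(\mathfrak{g}(K))=\Sym(\mathfrak{g}(K)[1])$ the induced weight grading is compatible with the Chevalley coderivation (since $[\mathfrak{g}(K)(r),\mathfrak{g}(K)(s)]\subset\mathfrak{g}(K)(r+s)$) and with the internal differential, and both the standard $\Sym$ coproduct and product preserve weight by inspection. Once (i) identifies the bialgebra structure on $H^\Lie(\mathfrak{g}(K))$ with that on $\Sym(H(\mathfrak{g}(K))[1])$, assertions (iv) and (v) follow.

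For (i), the second isomorphism is Proposition \ref{coalgebra iso}. The plan for the first is to show $\mathfrak{g}(K)$ is \emph{homotopy abelian}, i.e., that its induced Lie bracket on homology vanishes, and then apply the Chevalley--Eilenberg spectral sequence. Applied with the pointed interpretation of $(\mathbb{R}^n)^+$, Proposition \ref{cotensor identification} identifies $\mathfrak{g}(K)$ with a reduced tensor product of $\Lie((\mathbb{R}^n)^+\otimes K[-1])$ with the reduced $PL$ de~Rham algebra of $(\mathbb{R}^n)^+$, the bracket being $[a\otimes v,b\otimes w]=(-1)^{|v||b|}ab\otimes[v,w]$. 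Since $\widetilde{H}^*((\mathbb{R}^n)^+;\mathbb{Q})$ is concentrated in degree $n$ and its top class squares to zero, the induced product on reduced cohomology vanishes, and hence so does the Lie bracket on $H(\mathfrak{g}(K))$. The symmetric-degree filtration on $CE(\mathfrak{g}(K))$ then gives a spectral sequence with $E^1=\Sym(H(\mathfrak{g}(K))[1])$ and $d_1$ equal to the bracket-induced differential, which now vanishes; the sequence degenerates at $E^1$, yielding the first isomorphism at the level of coalgebras with matching standard $\Sym$ coproducts.

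The bialgebra upgrade is where the principal care is needed. Transporting the commutative product from $H(\mathbb{F}_n(K))$ via Proposition \ref{coalgebra iso} makes $H^\Lie(\mathfrak{g}(K))$ into a connected, cocommutative, coaugmented, commutative bialgebra over $\mathbb{Q}$; the Milnor--Moore structure theorem then identifies it canonically with the symmetric algebra on its primitives. The coalgebra isomorphism with $\Sym(H(\mathfrak{g}(K))[1])$ pins down the primitives as $H(\mathfrak{g}(K))[1]$, since $\Sym^1$ is exactly the space of primitives in the standard $\Sym$ coalgebra, so the transported product must coincide with the standard $\Sym$ product, completing the bialgebra isomorphism. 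The principal obstacle is justifying the vanishing of the homology-level bracket, which reduces to the degeneracy of the ring structure on $\widetilde{H}^*((\mathbb{R}^n)^+;\mathbb{Q})$ together with the pointed interpretation of the cotensor implicit in the definition of $\mathfrak{g}(K)$.
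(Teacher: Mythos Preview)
Your argument for (i) contains a genuine gap. From the vanishing of the cup product on $\widetilde{H}^*((\mathbb{R}^n)^+;\mathbb{Q})$ you correctly deduce that the induced bracket on $H(\mathfrak{g}(K))$ is zero, hence that $d_1=0$ in the Chevalley--Eilenberg spectral sequence. But this alone does \emph{not} force degeneration at $E^1$: the higher differentials $d_r$ for $r\geq 2$ encode iterated Massey-type brackets on $H(\mathfrak{g}(K))$ arising from the dg structure, and these can be nonzero for a dg Lie algebra whose homology bracket vanishes. What is missing is precisely the formality of $(\mathbb{R}^n)^+$. The paper uses this: since $(\mathbb{R}^n)^+\simeq S^n$ is formal, $\widetilde{A}_{PL}((\mathbb{R}^n)^+)$ is quasi-isomorphic to $\widetilde{H}^{-*}((\mathbb{R}^n)^+)$ as a nonunital CDGA, so $\mathfrak{g}(K)$ is quasi-isomorphic \emph{as a Lie algebra} to $H(\mathfrak{g}(K))$, which is abelian. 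Then $CE(\mathfrak{g}(K))\simeq CE(H(\mathfrak{g}(K)))=\Sym(H(\mathfrak{g}(K))[1])$ with zero differential, and the coalgebra isomorphism follows immediately---no spectral sequence is needed.

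Your route to the product identification via Milnor--Moore is also different from the paper's and needs an additional remark. The structure theorem requires connectedness, which need not hold in homological degree for arbitrary $K$; one should instead invoke connectedness with respect to the weight (equivalently, cardinality) grading, where weight zero is $\mathbb{Q}$. The paper sidesteps this entirely by observing directly that the $n$-disk multiplication on $U_n(\mathfrak{g})=C^\Lie(\mathfrak{g}^{(\mathbb{R}^n)^+})$ is induced, via the collapse map $(\mathbb{R}^n)^+\to(\mathbb{R}^n)^+\vee(\mathbb{R}^n)^+$, by a Lie algebra map which on homology is the fold map $H(\mathfrak{g}(K))\oplus H(\mathfrak{g}(K))\to H(\mathfrak{g}(K))$; applying $C^\Lie$ to the fold map yields the standard symmetric-algebra multiplication. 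This direct identification avoids any appeal to Hopf-algebra structure theorems.
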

\begin{proof}
\begin{enumerate}
\item We note that $\mathfrak{g}(K)$ is a formal Lie algebra, since the pointed space $(\mathbb{R}^n)^+$ is formal; moreover, since $H(\mathfrak{g}(K))$ is abelian, there is no differential in the Chevalley-Eilenberg complex, so we have isomorphisms of coaugmented coalgebras $$H^\Lie(\mathfrak{g}(K))\cong H^\Lie(H(\mathfrak{g}(K)))\cong\Sym(H(\mathfrak{g}(K))[1]).$$ From the discussion of \S\ref{Lie algebras}, the product of $H^\Lie(\mathfrak{g}(K))$ is the map induced on Lie algebra homology by the $n$-disk algebra structure map of $\mathfrak{g}(K)$ corresponding to any embedding $\mathbb{R}^n\amalg\mathbb{R}^n\to \mathbb{R}^n$, and any such structure map induces the fold map $$\xymatrix{H(\mathfrak{g}(K))\oplus H(\mathfrak{g}(K))\ar[r]^-+&H(\mathfrak{g}(K))}$$ at the level of homology. Likewise, the coproduct is induced by the diagonal, and we recognize the standard bialgebra structure on $\Sym$. The second isomorphism now follows by Proposition \ref{coalgebra iso}.
\item The cardinality grading is natural, and the product is the map induced on homology by the $n$-disk algebra structure map corresponding to any embedding $\mathbb{R}^n\amalg\mathbb{R}^n\to \mathbb{R}^n$.
\item Since the coproduct preserves the cardinality grading on generators by definition, the claim follows from (ii) and the fact that $H(\mathbb{F}_n(K)$ is a bialgebra.
\item The claim is immediate from the definition of the weight grading.
\item Since the coproduct preserves the cardinality grading on generators by definition, the claim follows from (iv) and the fact that $H^\Lie(\mathfrak{g}(K))$ is a bialgebra.
\end{enumerate}
\end{proof}

The desired identification of bigradings now follows easily.

\begin{proposition}\label{bigraded isomorphism}
In the case $M=\mathbb{R}^n$, the isomorphisms of Theorems \ref{with grading}, \ref{twisted version}, and \ref{boundary case} are isomorphisms of bigraded vector spaces.
\end{proposition}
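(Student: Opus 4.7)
The plan is to exploit the bialgebra formalism of the preceding proposition and reduce the entire question to what happens on primitive elements. Since the equivalence of Proposition \ref{comparison} induces, in the case $M=\mathbb{R}^n$, an isomorphism of bialgebras $H(\varphi):H(\mathbb{F}_n(K))\xrightarrow{\sim}H^\Lie(\mathfrak{g}(K))$, and since both of these are symmetric algebras on their primitives (by part (i) of the preceding proposition, noting that $H(\mathfrak{g}(K))$ is abelian so that the Chevalley--Eilenberg differential vanishes on cohomology), $H(\varphi)$ is determined by what it does on primitives. Furthermore, because on each side the product is additive for the relevant grading (parts (ii) and (iv)), it is enough to verify that $H(\varphi)$ sends grade-$1$ primitives to grade-$1$ primitives.

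Next I would identify the grade-$1$ primitives explicitly on each side and compare them through the construction of $\varphi$. On the $\mathbb{F}_n$ side, the cardinality-$1$ subspace is the image of the unit $\iota:K\hookrightarrow\mathbb{F}_n(K)$, as is visible from the $k=1$ summand of Proposition \ref{free calculation} at $M=\mathbb{R}^n$. On the Lie side, the weight-$1$ subspace of $\mathfrak{g}(K)[1]$, namely the copy of $((\mathbb{R}^n)^+\otimes K[-1])^{(\mathbb{R}^n)^+}[1]$, sits inside $C^\Lie(\mathfrak{g}(K))$ as the weight-$1$ primitives. Now the map $\varphi$ was constructed in the proof of Proposition \ref{comparison} by extending freely along the composite
\[
K\longrightarrow ((\mathbb{R}^n)^+\otimes K)^{(\mathbb{R}^n)^+}\longrightarrow \Lie((\mathbb{R}^n)^+\otimes K[-1])^{(\mathbb{R}^n)^+}[1]\longrightarrow C^\Lie(\mathfrak{g}(K)),
\]
whose middle arrow is precisely the weight-$1$ inclusion. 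Therefore $H(\varphi)$ carries the cardinality-$1$ primitive summand $\iota(K)$ into the weight-$1$ primitive subspace, and the first paragraph then upgrades this to a bigraded isomorphism on the full bialgebra.

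For the variants, I would run exactly the same argument: in Theorem \ref{twisted version} one simply replaces $K=\mathbb{Q}$ by $K=\mathbb{Q}^{\det}$ or $K=\mathbb{Q}^{\det}[1]$, and in Theorem \ref{boundary case} one works with the augmented free $n$-disk algebra on a zero-pointed disk. Every construction used above (the cardinality grading of Proposition \ref{free calculation}, the weight grading on the free Lie algebra, the comparison map of Proposition \ref{comparison}, and the bialgebra structure of the preceding proposition) is natural in $K$ and in the augmentation, so no new input is required.

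The step I expect to demand the most care is the identification of primitives: one must be sure that after passing to homology $\mathfrak{g}(K)$ is abelian (so that $H^\Lie(\mathfrak{g}(K))=\Sym(H(\mathfrak{g}(K))[1])$ with weight-$1$ primitives and no extra hidden coalgebra contributions), and that the canonical map from primitives to the bialgebra recovers both gradings exactly. Both points are already contained in the preceding proposition, so in the end the present statement is a direct corollary of that proposition together with the naturality of the comparison map; the work is really in setting up, not in executing, the reduction to primitives.
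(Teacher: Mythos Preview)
Your reduction to primitives via the bialgebra structure is exactly the strategy the paper uses, and your identification of the cardinality-$1$ summand with the image of $\iota$ and of the weight-$1$ summand with the copy of $K$ inside $\mathfrak{g}(K)[1]$ is correct. The problem is the sentence ``it is enough to verify that $H(\varphi)$ sends grade-$1$ primitives to grade-$1$ primitives.'' This is not justified as written, and for $n$ even it is not automatic: the primitive subspace $V_n$ then has \emph{two} bihomogeneous generators, one of cardinality/weight $1$ and one of cardinality/weight $2$, and knowing that the first is sent to weight $1$ does not by itself prevent $H(\varphi)$ from sending the weight-$2$ primitive to a combination with a nonzero weight-$1$ component. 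Your second paragraph only traces the map on cardinality $1$, so the fate of the class $\tilde y$ is never addressed.

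The paper closes this gap with a homological-degree argument rather than by further tracing the construction: since $x$ lives in degree $0$ and $\tilde y$ in degree $n-1>0$, any graded automorphism of $V_n$ must send each to a scalar multiple of itself, and injectivity forces that scalar to be nonzero. Adding this one observation (or, alternatively, arguing directly that the $n$-disk structure maps on $U_n(\mathfrak{g}(K))$ are additive for weight, so that the free extension $\varphi$ carries cardinality $k$ into weight $k$ for all $k$) would make your argument complete. As it stands, the odd-$n$ case is fine, but the even-$n$ case has a genuine gap.
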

\begin{proof}
We present the argument for Theorem \ref{with grading}, the others being essentially identical.

We follow the convention that a subscript indicates homological degree, a generator decorated with a tilde has weight 2, and an unadorned generator has weight 1. There is an isomorphism of bialgebras $H(\mathbb{F}_n(\mathbb{Q}))\cong\Sym(V_n)$, where $$V_n= \begin{cases}\mathbb{Q}\langle x_0\rangle&\quad n\text{ odd}\\
\mathbb{Q}\langle x_0, \tilde{y}_{n-1}\rangle&\quad n\text{ even}.
\end{cases}$$ 

Identifying both sides of the isomorphism of Theorem \ref{with grading} with $\Sym(V_n)$, Proposition \ref{coalgebra iso} permits us to view this isomorphism as an automorphism $f$ of this graded bialgebra. Now, as a morphism of graded coalgebras, $f$ takes primitives to primitives, so that there is an induced map $f|_{V_n}:V_n\to V_n$ of graded vector spaces, which we claim is a bigraded isomorphism. In the case of odd $n$, the claim is implied by the injectivity of $f|_{V_n}$, while in the even case we note that, for degree reasons, $f(x)$ is a scalar multiple of $x$ and $f(\tilde y)$ is a scalar multiple of $\tilde y$. By injectivity, this scalar is nonzero, and we conclude that $f|_{V_n}$ is a bigraded isomorphism. 

Now, since $f$ is also a map of algebras, we have the equation $f(x_1\cdots x_r)=f(x_1)\cdots f(x_r)$, which, together with the previous paragraph, shows that $f$ preserves weight on monomials. Since monomials form a bihomogeneous basis and $f$ is linear, the proof is complete.
\end{proof}

\subsection{Interlude: splitting configurations}\label{splitting points}

Configuration spaces of different cardinalities are interrelated by splitting and forgetting maps inherited from the Cartesian product via the embedding $\Conf_k(M)\to M^k$. This rich structure invites an inductive way of thinking that appears in one form or another in essentially every classical approach to these spaces; see \cite{Arnold} and \cite{FN} for the origins of this approach and \cite{Church} for a modern implementation.

In the setting of factorization homology, the importance of these splitting maps is that they assemble to form a coproduct, a shadow of which we have seen in the previous section, endowing $\mathbb{F}_n(K)$ with the structure of an $n$-disk algebra \emph{in cocommutative coalgebras.} We will not need the full force of this statement, nor will we need the geometric interpretation of this coalgebra structure; nevertheless, we devote the remainder of this section to elucidating this interpretation, both for its general interest and for the motivation it provides for our proof of homological stability.

\begin{remark}
The constructions of this section are valid in more general stable settings than chain complexes, including the symmetric monoidal $\infty$-category of spectra with smash product. We intend to return to this setting in future work.
\end{remark}

The basic ingredient is the collection of natural transformations $$s_{i,j}:\Conf^\fr_k\to \Conf^\fr_i\times\Conf^\fr_j,$$ defined whenever $i+j=k$, which make the diagram $$\xymatrix{
\Conf^\fr_k(M)\ar[d]\ar[r]^-{(s_{i,j})_M}&\Conf_i^\fr(M)\times\Conf_j^\fr(M)\ar[d]\\
\prod_k\Fr_M\ar[r]^-\cong&\prod_i\Fr_M\times\prod_j\Fr_M
}$$ commute; in other words, $$(s_{i,j})_M(x_1,\ldots, x_k)=\big((x_1,\ldots, x_i),(x_{i+1},\ldots, x_k)\big).$$ Given an $O(n)$-module $K$, then, we have maps $$\xymatrix{s^K_{i,j}:\Conf_k^\fr\otimes K^{\otimes k}\ar[r]^-{\delta_{i,j}\otimes 1}&(\Conf_i^\fr\times\Conf_j^\fr)\otimes K^{\otimes k}\ar[r]^-\simeq&\Conf_i^\fr\otimes K^{\otimes i}\otimes\Conf_j^\fr\otimes K^{\otimes j}},$$ which are $(\Sigma_i\times\Sigma_j)\ltimes O(n)^k$-equivariant. Taking $O(n)^k$-coinvariants and using that induction is right adjoint to restriction for the inclusion $\Sigma_i\times\Sigma_j\to \Sigma_k$, we obtain by adjunction a $\Sigma_k$-equivariant map $$\xymatrix{\tilde s_{i,j}^K:\Conf_k^\fr\displaystyle\bigotimes_{O(n)^k} K^{\otimes k}\ar[r]&\mathrm{Ind}_{\Sigma_i\times\Sigma_j}^{\Sigma_k}\bigg(\Conf_i^\fr\displaystyle\bigotimes_{O(n)^i} K^{\otimes i}\otimes\Conf_j^\fr\bigotimes_{O(n)^j} K^{\otimes j}}\bigg).$$ Finally, taking $\Sigma_k$-coinvariants and summing over $k, i,$ and $j$, we obtain a map \[s^K: \displaystyle\bigoplus_{k\geq0}\bigg(\Conf_k^\fr\displaystyle\bigotimes_{\Sigma_k\ltimes O(n)^k} K^{\otimes k}\bigg)\to\displaystyle\bigoplus_{k}\bigoplus_{i+j=k}\bigg(\Conf_i^\fr\displaystyle\bigotimes_{\Sigma_i\ltimes O(n)^i} K^{\otimes i}\otimes\Conf_j^\fr\bigotimes_{\Sigma_j\ltimes O(n)^j} K^{\otimes j}\bigg).\] Collecting terms and restricting to $\Disk_n$, we recognize this as a monoidal natural transformation $$\xymatrix{s^K:\mathbb{F}_n(K)\ar[r]&\mathbb{F}_n(K)\otimes\mathbb{F}_n(K).}$$ 

The proof of homological stability given in the next section is completely internal to the Chevalley-Eilenberg complex, but the motivation behind it comes from thinking of the symmetric coproduct, given by splitting monomials in all possible ways, as corresponding to this geometric coproduct, given by splitting configurations in all possible ways. To see the connection, we recall that, in the approach of \cite{Church}, stability is induced by the transfer maps $$\xymatrix{
H_*(\Conf_{k+1}(M);\mathbb{Q})\ar[rr]^-{\sum_i(p_i)_*}\ar[d]&&H_*(\Conf_k(M);\mathbb{Q})\ar[d]\\
H_*(B_{k+1}(M);\mathbb{Q})\ar@{-->}[rr]^-{\mathrm{tr}}&&H_*(B_k(M);\mathbb{Q}),
}$$ where $p_i$ denotes the projection that forgets $x_i$. In terms of our splitting maps, we have a factorization $$\xymatrix{
\Conf_{k+1}(M)\ar[d]_-{\sigma_i}\ar[r]^-{p_i}&\Conf_k(M)\\
\Conf_{k+1}(M)\ar[r]^-{s_{1,k}}&M\times \Conf_k(M)\ar[u]
}$$ where $\sigma_i$ denotes the permutation that moves $x_i$ to the first position while maintaining the relative order of the remaining points, and the unmarked arrow is the projection. The composite $s_{1,k}\sigma_i$ is a component of the coproduct defined above, and the projection away from the $M$ factor corresponds at the level of homology to evaluating against the unit in $H^0(M;\mathbb{Q})$. Together, these observations suggest that homological stability should be induced taking a \emph{cap product}. We realize this idea in the next section.

\subsection{Stability}\label{stability section}

This section assembles the proof of Theorem \ref{stability theorem}. Throughout, unless otherwise noted, $M$ will be connected, without boundary, and of dimension $n>1$. For the sake of brevity, we make the abbreviation $$\mathfrak{g}_M=H_c^{-*}(M;\Lie(\mathbb{Q}^w[n-1])).$$

Let $\lambda\in H^0(M)$ denote the multiplicative unit. We view this cohomology class as a functional on $H_0(M)\cong H_c^{0}(M;\mathbb{Q}^w)[n]$ and hence, extending by zero, on $CE(\mathfrak{g}_M)$, since the former is canonically a summand of the underlying bigraded vector space of the latter. Thus we may contemplate the \emph{cap product} with this element, denoted $\lambda\frown(-)$, which is defined as the composite$$\xymatrix{CE(\mathfrak{g}_M)\cong\mathbb{Q}\otimes CE(\mathfrak{g}_M)\ar[r]^-{\lambda\otimes\gamma}&CE(\mathfrak{g}_M)^\vee\otimes CE(\mathfrak{g}_M)\otimes CE(\mathfrak{g}_M)\ar[rr]^-{\langle-,-\rangle\otimes\mathrm{id}}&&CE(\mathfrak{g}_M)}.$$

Denote by $p\in H_c^n(M;\mathbb{Q}^w)\subset CE(\mathfrak{g}_M)$ the Poincar\'{e} dual of a point in $M$, which is well-defined since $M$ is connected. Extend the set $\{1,p\}$ once and for all to a bihomogeneous basis $\mathcal{B}$ for $\mathfrak{g}_M[1]$. Then the set of nonzero monomials in elements of $\mathcal{B}$ form a bihomogenous basis for $CE(\mathfrak{g}_M)$, and, under the resulting identification of this vector space with its dual, $\lambda$ is identified with the dual functional to $p$. Since $p$ is closed of degree 0 and weight 1, we conclude the following:

\begin{proposition}
$\lambda\frown(-)$ is a chain map of degree 0 and weight $-1$.
\end{proposition}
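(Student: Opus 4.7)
The plan is to reduce the chain map property to the assertion that $\lambda$ is a cocycle in $CE(\mathfrak{g}_M)^\vee$. Granted this, the conclusion is formal: writing $\lambda\frown(-) = (\langle\lambda,-\rangle \otimes \mathrm{id}) \circ \gamma$, we see that $\lambda\frown(-)$ is a composite of chain maps, since $\gamma$ is a chain map by definition of the differential graded coalgebra structure on $CE(\mathfrak{g}_M)$, and pairing with a degree-$0$ cocycle is a chain map.

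The bidegree claims are straightforward bookkeeping. Since $p$ sits inside the weight-$1$ summand $H_c^{-*}(M;\mathbb{Q}^w)[n]$ of $\mathfrak{g}_M[1]$ in total degree $0$, the dual functional $\lambda$ has bidegree $(0,-1)$, and the Sweedler formula for the cap product transports these bidegrees to $\lambda\frown(-)$.

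The substantive step is thus to verify that $\lambda(dy) = 0$ for every $y \in CE(\mathfrak{g}_M)$. Since $\mathfrak{g}_M$ carries no internal differential (its underlying object is already a graded cohomology group), the Chevalley-Eilenberg differential on $\Sym(\mathfrak{g}_M[1])$ reduces to the coderivation $D$ induced by the Lie bracket of $\mathfrak{g}_M$. This $D$ preserves the weight grading, because the bracket respects the weight decomposition of $\mathfrak{g}_M$, and it vanishes on $\Sym^1 = \mathfrak{g}_M[1]$, because there is nothing to bracket. Since every cogenerator of $\Sym(\mathfrak{g}_M[1])$ has weight at least $1$, any monomial in $\Sym^{\geq 2}$ has total weight at least $2$; consequently the weight-$1$ part of $CE(\mathfrak{g}_M)$ is confined to $\Sym^1$, where $D$ is zero. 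No scalar multiple of $p$ lies in the image of $d$, so $\lambda(dy) = 0$ for every $y$, as needed. The only obstacle worth naming is keeping straight the interaction between the weight grading and the symmetric-power filtration, and this is minor once one observes that each cogenerator has strictly positive weight.
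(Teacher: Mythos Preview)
Your argument is correct and is essentially a spelled-out version of the paper's one-line justification, which simply notes that ``$p$ is closed of degree $0$ and weight $1$'' and leaves the reader to infer the rest. The substantive point you isolate---that the weight-$1$ part of $CE(\mathfrak{g}_M)$ lies entirely in $\Sym^1$, where $D$ vanishes, so that $p$ cannot appear in the image of the differential and hence $\lambda$ is a cocycle---is exactly what makes the paper's terse sentence work.
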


There is a simple formula describing this map. Here and throughout, when we speak of divisibility, multiplication, and differentiation in the Chevalley-Eilenberg complex, we refer only to the formal manipulation of bigraded polynomials; in particular, $CE(\mathfrak{g}_M)$ is \emph{not} in general a differential graded algebra.

\begin{proposition}\label{cap formula} The formula
$$\lambda \frown x=\displaystyle\frac{dx}{dp}$$ holds for all $x\in CE(\mathfrak{g}_M)$.
\end{proposition}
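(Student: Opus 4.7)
The plan is to verify the formula on a bihomogeneous basis of monomials in $\mathcal{B}$ and then extend by linearity. First I would identify the comultiplication $\gamma$ on $CE(\mathfrak{g}_M) = \Sym(\mathfrak{g}_M[1])$: since (as noted in \S\ref{Lie algebras}) the Chevalley-Eilenberg coproduct is the one induced by the diagonal $\mathfrak{g}_M\to\mathfrak{g}_M\oplus\mathfrak{g}_M$, on the underlying symmetric coalgebra it is just the shuffle coproduct making the generators $\mathfrak{g}_M[1]$ primitive. Explicitly, for a monomial $m = c_1\cdots c_n$ with $c_i\in\mathcal{B}$,
$$\gamma(m) = \sum_{I\subseteq\{1,\ldots,n\}}\varepsilon(I)\,\Big(\prod_{i\in I}c_i\Big)\otimes\Big(\prod_{j\notin I}c_j\Big),$$
with $\varepsilon(I)$ the Koszul sign of the shuffle.

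Next I would unpack the definition of $\lambda\frown m$. Since $\lambda$ is by construction the functional dual (in the monomial basis) to $p\in H_c^n(M;\mathbb{Q}^w)[n]\subseteq\mathfrak{g}_M[1]$, and since $p$ is itself a basis element of $\mathcal{B}$, pairing $\lambda$ against the left tensor factor of $\gamma(m)$ annihilates every summand except those for which $\prod_{i\in I}c_i = p$, i.e., those with $|I|=1$ and that single factor equal to $p$.

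Writing the monomial (after reordering) as $m = p^{k}\,b_1\cdots b_s$ with $b_j\in\mathcal{B}\setminus\{p\}$, the surviving terms are precisely the $k$ summands obtained by extracting one of the $k$ copies of $p$. Here the crucial numerical point is that $p$ sits in degree $0$ of $\mathfrak{g}_M[1]$ (since $p\in H_c^n(M;\mathbb{Q}^w)$ lives in degree $-n$ of $H_c^{-*}$, the Lie generator $\mathbb{Q}^w[n-1]$ shifts by $n-1$, and the outer shift $[1]$ in $\mathfrak{g}_M[1]$ adds one more). Consequently no Koszul signs arise when commuting $p$ past the remaining $b_j$, and each of the $k$ extractions contributes $+p^{k-1}b_1\cdots b_s$. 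Summing gives $\lambda\frown m = k\,p^{k-1}b_1\cdots b_s$, which is exactly the formal partial derivative $\frac{dm}{dp}$ in the graded polynomial algebra $\Sym(\mathfrak{g}_M[1])$.

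The proof then concludes by linearity, since monomials in $\mathcal{B}$ form a basis of $CE(\mathfrak{g}_M)$ and both sides of the claimed identity are $\mathbb{Q}$-linear in $x$. The main obstacle is purely bookkeeping: one must be sure that (a) the comultiplication used to define $\lambda\frown(-)$ is indeed the shuffle coproduct at the chain level, not merely at the level of homology, and (b) that the degree of $p$ in $\mathfrak{g}_M[1]$ is $0$, so that the $k$ contributions from the $k$ copies of $p$ add with the same sign rather than alternating. Both points follow directly from the conventions already fixed in \S\ref{Lie algebras} and from the degree computation above.
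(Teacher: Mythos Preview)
Your proof is correct and follows essentially the same route as the paper: both compute the shuffle coproduct on a monomial $p^k b_1\cdots b_s$, observe that pairing with $\lambda$ kills every summand whose left tensor factor is not exactly $p$, and use that $|p|=0$ to avoid Koszul signs. Your presentation is slightly more streamlined than the paper's, which splits into the cases $y=1$ and $y\neq 1$ and uses the multiplicativity $\gamma(p^r y)=\gamma(p)^r\gamma(y)$ explicitly, but the underlying computation is the same.
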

\begin{proof}
Both sides are linear, so the claim is equivalent to the equality $$\lambda\frown p^ry=rp^{r-1}y$$ whenever $r\geq0$ and $y$ is a monomial in elements of $\mathcal{B}\setminus\{p\}$. There are now two cases.

The first case is when $y$ is a scalar, in which case we may assume by linearity that $y=1$, so that $x=p^r$, and \begin{align*}\gamma(x)&=\gamma(p)^r\\
&=(p\otimes 1+1\otimes p)^r\\
&=\sum_{i=0}^r\binom{r}{i}p^i\otimes p^{r-i},
\end{align*} so that $$\lambda\frown x=\sum_{i=0}^r\binom{r}{i}\langle \lambda,p^i\rangle p^{r-i}=rp^{r-1}.$$

The second is when $y$ is a monomial in $\mathcal{B}\setminus\{1,p\}$, in which case we may write $$\gamma(y)=y\otimes 1+1\otimes y+\sum_j y_j\otimes y_j'$$ with $y_j$ and $y_j'$ monomials in $\mathcal{B}\setminus\{1,p\}$. Then we have \begin{align*}\gamma(p^ry)&=\gamma(p)^r\gamma(y)\\
&=(p\otimes 1+1\otimes p)^r(y\otimes 1+1\otimes y+\sum_j y_j\otimes y_j')\\
&=\sum_{i=0}^r\binom{r}{i}\bigg(p^iy\otimes p^{r-i}+p^i\otimes p^{r-i}y+\sum_jp^iy_j\otimes p^{r-i}y_j'\bigg),
\end{align*} whence $$\lambda\frown p^ry=\sum_{i=0}^r\binom{r}{i}\bigg(\langle \lambda,p^iy\rangle p^{r-i}+\langle \lambda, p^{i}\rangle p^{r-i}y+\sum_j\langle \lambda, p^iy_j\rangle p^{r-i}y_j'\bigg)=rp^{r-1}y,$$ since $p^iy$ is not a scalar multiple of $p$ for any $i$, nor is $p^i y_j$ a scalar multiple of $p$ for any $(i,j)$.
\end{proof}

\begin{corollary}\label{surjective}
The chain map $\lambda\frown(-)$ is surjective.
\end{corollary}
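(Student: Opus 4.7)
The plan is to read off surjectivity directly from the explicit formula of Proposition \ref{cap formula}, which identifies $\lambda \frown (-)$ with formal differentiation $\tfrac{d}{dp}$ on the polynomial algebra $CE(\mathfrak{g}_M) = \Sym(\mathfrak{g}_M[1])$. Once this identification is in hand, surjectivity amounts to the existence of antiderivatives, which is immediate in characteristic zero.

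First, I would recall that the basis $\mathcal{B}$ for $\mathfrak{g}_M[1]$ was chosen to contain $p$, so that the set of nonzero monomials in $\mathcal{B}$ forms a basis for $CE(\mathfrak{g}_M)$. Every such monomial can be written uniquely in the form $p^r z$, where $r \geq 0$ and $z$ is a monomial in $\mathcal{B} \setminus \{p\}$. Since $p$ is of degree $0$, it is an even generator of the symmetric algebra, and in particular $p^{r+1}$ is a nonzero element of $CE(\mathfrak{g}_M)$ for every $r \geq 0$.

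Next, for each such basis monomial $y = p^r z$, I would set
$$x = \frac{1}{r+1}\, p^{r+1} z,$$
which makes sense because the base field has characteristic zero. Since $z$ does not involve $p$, we have $\tfrac{dz}{dp}=0$, and so Proposition \ref{cap formula} together with the Leibniz rule yields
$$\lambda \frown x = \frac{d}{dp}\left(\frac{1}{r+1}\, p^{r+1} z\right) = p^r z = y.$$
As the basis monomials span $CE(\mathfrak{g}_M)$ and $\lambda \frown (-)$ is linear, this exhibits a preimage for an arbitrary element and hence proves surjectivity.

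There is no real obstacle here; the only mild point is to verify that $p$ is an even generator so that $p^{r+1}$ does not vanish on parity grounds, and that we are free to divide by $r+1$, which is guaranteed by the standing characteristic zero hypothesis.
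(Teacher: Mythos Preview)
Your proof is correct and matches the paper's argument essentially line for line: write a general monomial as $p^r y$ with $y$ not involving $p$, and exhibit $\tfrac{1}{r+1}p^{r+1}y$ as a preimage under $\tfrac{d}{dp}$. The extra remarks you make about $p$ being even and about characteristic zero are accurate and helpful, but the approach is the same.
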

\begin{proof}
It suffices to show that a general monomial in elements of $\mathcal{B}$ lies in the image. Such a monomial may be written as $p^ry$ with $r\geq0$ and $y$ a monomial in elements of $\mathcal{B}\setminus\{p\}$. We than have $$\frac{d}{dp}\bigg(\frac{1}{r+1}p^{r+1}y\bigg)=p^ry.$$
\end{proof}

The central observation behind our approach to stability is the following.

\begin{proposition}\label{divisibility}
Let $x$ be a nonzero monomial in $CE(\mathfrak{g}_M)$. Then $x$ is divisible by $p$ provided
\begin{itemize}
\item $\mathrm{wt}(x)>|x|+1$ and $M$ is an orientable surface,  or
\item $\mathrm{wt}(x)>|x|$ and $M$ is not an orientable surface.
\end{itemize}
\end{proposition}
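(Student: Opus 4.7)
The plan is to enumerate the bihomogeneous basis $\mathcal{B}$ of $\mathfrak{g}_M[1]$ explicitly, locate the very few elements that violate $\mathrm{wt}(b) \leq |b|$, and then invoke the graded-commutative rule. Using the description of $\Lie(\mathbb{Q}^w[n-1])$ recalled at the start of \S\ref{formulas}, one sees that $\mathcal{B}$ consists of weight-$1$ classes of degree $n-k$ arising from $H_c^k(M;\mathbb{Q}^w)$ for $0 \leq k \leq n$, together with (only when $n$ is even) weight-$2$ classes of degree $2n-1-j$ arising from $H_c^j(M;\mathbb{Q})$ for $0 \leq j \leq n$.

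I would next identify the generators $b$ with $\mathrm{wt}(b) > |b|$. A weight-$1$ generator is bad precisely when its degree is $0$, forcing $k = n$; by connectedness of $M$ this yields the class $p$, unique up to scalar. A weight-$2$ generator is bad when its degree is $0$ or $1$; degree $0$ would demand $j = 2n-1 > n$, which is impossible, while degree $1$ forces $n = 2$ and $j = 2$, giving a nonzero class exactly when $H_c^2(M;\mathbb{Q}) \neq 0$, i.e.\ when $M$ is a connected orientable surface. In that case, call the resulting generator $\omega$; it satisfies $\mathrm{wt}(\omega) - |\omega| = 1$.

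To conclude, I would observe that $\omega$ has odd degree $1$, so $\omega \wedge \omega = 0$ in $\Sym(\mathfrak{g}_M[1])$, and hence $\omega$ can appear at most once as a factor of any nonzero monomial. Given $x$ nonzero and not divisible by $p$, write $x = \omega^\epsilon y$ with $\epsilon \in \{0,1\}$ and $y$ a product of basis elements from $\mathcal{B} \setminus \{p,\omega\}$, each of which satisfies $\mathrm{wt}(b) \leq |b|$ by the previous step. Summing gives $\mathrm{wt}(x) = 2\epsilon + \mathrm{wt}(y) \leq 2\epsilon + |y| = |x| + \epsilon$, with $\epsilon = 0$ unless $M$ is an orientable surface. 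The contrapositive is exactly the statement of the proposition. The only nontrivial input is the vanishing $\omega^2 = 0$, which is forced by parity, so I do not anticipate a genuine obstacle beyond careful bookkeeping of the two cases.
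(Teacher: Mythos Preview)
Your proof is correct and uses essentially the same ideas as the paper's: both identify the only generators $b\in\mathcal{B}$ with $\mathrm{wt}(b)>|b|$ (namely $p$, and the degree-$1$ weight-$2$ class $\omega$ when $M$ is an orientable surface), and both invoke $\omega^2=0$ by parity. Your packaging is slightly cleaner---you classify the bad generators once and then bound $\mathrm{wt}(x)-|x|$ by a single additive inequality---whereas the paper argues by peeling off one bad factor, then (in the orientable-surface case) a second, reaching the same $\omega^2=0$ contradiction; the mathematical content is identical.
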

\begin{proof}
Suppose $\mathrm{wt}(x)>|x|$, and write $x=x_1\cdots x_r$ with $x_i\in \mathcal{B}$. Then $\mathrm{wt}(x_j)>|x_j|$ for some $j$. Since $x_j\in \mathfrak{g}_M[1]$, the weight of this element is either 1 or 2. 

In the first case, $x_j\in H_c^{-*}(M;\mathbb{Q}^w)[n]$, and we have $$|x_j|<\mathrm{wt}(x_j)=1\implies |x_j|=0,$$ since $H_c^{-*}(M;\mathbb{Q}^w)[n]$ is concentrated in degrees $0\leq *\leq n$. But $H_c^n(M;\mathbb{Q}^w)$ is one-dimensional on the class $p$, so $x_j$ is a scalar multiple of $p$.

In the second case, $x_j\in H_c^{-*}(M;\mathbb{Q})[2n-1]$, and we have $|x_j|<2$. Since $H_c^{-*}(M;\mathbb{Q})[2n-1]$ is concentrated in degrees $n-1\leq *\leq 2n-1$, we conclude that $x_j=0$ provided $n\neq 2$ (recall that we have already assumed $n>1$). Thus $x=0$, which is a contradiction. This proves the claim when $M$ is not a surface.

If $M$ is a non-orientable surface, then $H_c^{2}(M;\mathbb{Q})\cong H_0(M;\mathbb{Q}^w)=0,$ so that $H_c^{-*}(M;\mathbb{Q})[2n-1]$ is concentrated in degrees 2 and 3. Thus, in this case as well, we have a contradiction.

Assume now that $M$ is an orientable surface and $\mathrm{wt}(x)>|x|+1$. As before, write $x=x_1\ldots x_r$ and choose $x_j$ with $\mathrm{wt}(x_j)>|x_j|$, and assume that $x_j$ is not a scalar multiple of $p$. Then by the argument above, $\mathrm{wt}(x_j)=2$, so $|x_j|=1$, since $H_c^{-*}(M;\mathbb{Q})[3]$ is concentrated in degrees $1\leq*\leq 3$. 

Now, the monomial $x'=x_1\cdots \hat x_j\cdots x_r$ has the property that $$\mathrm{wt}(x')=\mathrm{wt}(x)-2>|x|-1=|x'|,$$ so there is some $x_i$ with $i\neq j$ and $\mathrm{wt}(x_i)>|x_i|$. If $x_i$ is a scalar multiple of $p$, we are finished; otherwise, repeating the same argument shows that $x_i$ has degree 1. But $H_c^{2}(M;\mathbb{Q})\cong H_0(M;\mathbb{Q})$ is one-dimensional, so that $x_i$ is a scalar multiple of $x_j$, and $x$ is divible by $x_j^2$. Since $x_j$ is of odd degree this implies that $x=0$, a contradiction.
\end{proof}

We are now equipped to prove Theorem \ref{stability theorem}. Denote by $C(k)$ the subcomplex of the Chevalley-Eilenberg complex spanned by the weight $k$ monomials. Taking the cap product with $1$ restricts to a map $\Phi_k:C(k+1)\to C(k)$, and we aim to show that this map induces an isomorphism in homology in the specified range.

Recall that the $r$th \emph{brutal truncation} of a chain complex $V$ is the chain complex $\tau_{\leq r}V$ whose underlying graded vector space is $$(\tau_{\leq r}V)_i=\begin{cases}
V_i\qquad &i\leq r\\
0\qquad &\text{else.}
\end{cases}$$ and whose differential is the restriction of the differential of $V$. Truncation is a functor on chain complexes in the obvious way.

We make use of the following elementary fact:

\begin{proposition}
Let $f:V\to W$ be a surjective chain map such that $\tau_{\leq r}f$ is a chain isomorphism. Then $f$ is a homology isomorphism through degree $r$ and a homology surjection in degree $r+1$.
\end{proposition}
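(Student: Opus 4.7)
The plan is to realize $f$ as the quotient in a short exact sequence of chain complexes and read the conclusion off the associated long exact sequence in homology. Since $f$ is surjective, we have a short exact sequence
\[
0\to K\to V\xrightarrow{f} W\to 0,
\]
where $K=\ker f$. The hypothesis that $\tau_{\leq r}f$ is a chain isomorphism says precisely that $f_i:V_i\to W_i$ is an isomorphism for every $i\leq r$, which is equivalent to the statement that $K_i=0$ for all $i\leq r$.

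From $K_i=0$ for $i\leq r$ it follows immediately that $H_i(K)=0$ for $i\leq r$, since both the cycles and the boundaries in these degrees vanish. The long exact sequence
\[
\cdots\to H_{i+1}(W)\to H_i(K)\to H_i(V)\xrightarrow{H_i(f)} H_i(W)\to H_{i-1}(K)\to\cdots
\]
then collapses: for $i\leq r$ the flanking groups $H_i(K)$ and $H_{i-1}(K)$ both vanish, so $H_i(f)$ is an isomorphism; for $i=r+1$ we have $H_r(K)=0$, so the segment
\[
H_{r+1}(V)\xrightarrow{H_{r+1}(f)} H_{r+1}(W)\to H_r(K)=0
\]
forces $H_{r+1}(f)$ to be surjective.

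This is entirely routine; there is no genuine obstacle. The only point that requires a moment of attention is the reduction of the degreewise vanishing $K_i=0$ (for $i\leq r$) to the vanishing of $H_i(K)$ in the same range, which is immediate, and the bookkeeping of indices in the connecting map so as to identify degree $r+1$ as the first place where surjectivity, rather than bijectivity, is obtained.
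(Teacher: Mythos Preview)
Your argument is correct and complete. It differs from the paper's proof in that the paper argues directly with cycles and boundaries: it observes that $\tau_{\leq r}f$ being an isomorphism immediately gives a homology isomorphism through degree $r-1$, then shows by hand that $f$ is an isomorphism on $r$-cycles and surjective on $r$-boundaries (lifting a bounding element through the surjection $f$), and finally lifts an $(r+1)$-cycle to a cycle using the injectivity of $f$ in degree $r$. Your route via the short exact sequence $0\to K\to V\to W\to 0$ and the associated long exact sequence is more conceptual and avoids any element-level chasing; the paper's direct argument, on the other hand, makes no appeal to the long exact sequence machinery and is entirely self-contained. Both are equally valid for this elementary lemma.
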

\begin{proof}
From the definition of the brutal truncation, it is immediate that $f$ is a homology isomorphism through degree $r-1$. Moreover, $f$ induces a bijection on $r$-cycles and an injection on $r$-boundaries. 

To show that $f$ is a homology isomorphism in degree $r$, it suffices to show that $f^{-1}(w)$ is a boundary if $w\in W_r$ is a boundary. Write $w=du$; then, by surjectivity, there is some $\tilde u\in V_r$ such that $f(\tilde u)=u$, and $$f(d\tilde u)=df(\tilde u)=du=w\implies f^{-1}(w)=d\tilde u,$$ as desired.

To show that $f$ is a homology surjection in degree $r+1$, let $v\in W_{r+1}$ be a cycle. By surjectivity, $v=f(\tilde v)$, and it will suffice to show that $\tilde v$ is a cycle, for which we have $$f(d\tilde v)=df(\tilde v)=dv=0\implies d\tilde v=f^{-1}(0)=0.$$
\end{proof}

\begin{proof}[Proof of Theorem \ref{stability theorem}]
Assume first that $M$ is not an orientable surface. By the previous proposition and Proposition \ref{surjective}, we are reduced to showing that $\tau_{\leq k}\Phi_k$ is a chain isomorphism. To see this, let $x\in \tau_{\leq k}C(k+1)$ be a monomial. Then $\mathrm{wt}(x)>|x|$, so that $x=p^ry$ with $r>0$ and $y$ a monomial in $\mathcal{B}\setminus\{p\}$ by Proposition \ref{divisibility}. By Proposition \ref{cap formula}, $\Phi_k(x)=rp^{r-1}y,$ so $\tau_{\leq k}\Phi_k$ maps distinct elements of our preferred basis for $C(k+1)$ to nonzero scalar multiples of distinct elements of our preferred basis for $C(k)$, which implies that $\tau_{\leq k}\Phi_k$ is injective. But $\Phi$ and hence $\Phi_k$ are surjective by Proposition \ref{surjective}, so $\tau_{\leq k}\Phi_k$ is as well.

Assume now that $M$ is an orientable surface. For the same reason, we are reduced to showing that $\tau_{k-1}\Phi_k$ is a chain isomorphism, which is accomplished by the same argument, using the other half of Proposition \ref{divisibility}.
\end{proof}

\begin{remark}
Let $\mathbb{K}$ denote the Klein bottle. As shown in \S\ref{examples}, $$\dim H_*(B_k(\mathbb{K});\mathbb{Q})=\begin{cases}
1\qquad &i\in\{0,1,2, k+1\}\\
2\qquad &3\leq i\leq k\\
0\qquad &\text{else.}
\end{cases}$$ In particular, $H_{k+1}(B_{k+1}(\mathbb{K});\mathbb{Q})\ncong H_{k+1}(B_{k}(\mathbb{K});\mathbb{Q})$, so that our bound is sharp in the sense that no better stable range holds for all manifolds that are not orientable surfaces.
\end{remark}

\begin{remark}
If $M$ is orientable and $H_*(M;\mathbb{Q})=0$ for $1\leq *\leq r-1$, then $H_c^{-*}(M;\mathbb{Q})=0$ for $n-r+1\leq -*\leq n-1$, and the argument of Lemma \ref{divisibility} shows that a monomial $x$ is divisible by $p$ provided its weight is greater than $\frac{|x|}{r}+1$. This improved estimate leads to an improved stable range, as in \cite[4.1]{Church}.
\end{remark}

\begin{remark}
In \cite{MillerKupers}, factorization homology is used to obtain homological stability results for various constructions on open manifolds. The approach there is through certain ``partial algebras'' and appears unrelated to ours.
\end{remark}

\section{Examples}\label{examples}

We now present a selection of computations illustrating the following general procedure for determining the rational homology of the configuration space of $k$ points in an $n$-manifold $M$:

\begin{enumerate}
\item compute the compactly supported cohomology of $M$, twisted if necessary;
\item compute the Lie algebra homology of $H_c^{-*}(M;\Lie(\mathbb{Q}^w[n-1]))$;
\item count basis elements of weight $k$.
\end{enumerate}

\noindent It is worth noting that the Chevalley-Eilenberg complex allows one to obtain answers simultaneously for all $k$, reducing an infinite sequence of computations to one. 

The computations of this section are all relatively elementary, and one can do better with more effort. In \cite{Gabe}, this approach is used to determine the Betti numbers of $B_k(\Sigma)$ for every surface $\Sigma$.

\begin{convention} In the following examples, a variable decorated with a tilde has weight 2, while an unadorned variable has weight 1.
\end{convention}

\subsection{Punctured Euclidean space}

As a warm-up and base case, we recover the classical computation of $H_*(B_k(\mathbb{R}^n);\mathbb{Q})$. Since there are no cup products in the compactly supported cohomology of $\mathbb{R}^n$, there are no differentials in the corresponding Chevalley-Eilenberg complex. Thus $H_*(B_k(\mathbb{R}^n);\mathbb{Q})$ is identified with the subspace of $\mathbb{Q}[x]$ spanned by $x^k$ when $n$ is odd, while for $n$ even the identification is with the subspace of $$\mathbb{Q}[x]\otimes\Lambda[\tilde x],\qquad |x|=0,\,|\tilde x|=n-1$$ spanned by elements of weight $k$, a basis for which is given by $\{x^k, x^{k-2}\tilde x\}$. We conclude, for all $k>1$, that $$H_*(B_k(\mathbb{R}^n);\mathbb{Q})\cong\begin{cases}
\mathbb{Q}\qquad\qquad& n\text{ odd}\\
\mathbb{Q}\oplus\mathbb{Q}[n-1] &n\text{ even}.
\end{cases}$$

Now, choose $\bar p=\{p_1,\ldots, p_m\}\in\mathbb{R}^n$. There is a homotopy equivalence $(\mathbb{R}^n\setminus\bar p)^+\simeq S^{n}\vee (S^1)^{\vee m},$ so that $H_c^{-*}(\mathbb{R}^n\setminus\bar p;\mathbb{Q})\cong\mathbb{Q}^m[-1]\oplus\mathbb{Q}[-n].$ There are no cup products, so there can be no differentials.

If $n$ is odd, Theorem \ref{with grading} identifies $H_*(\mathbb{R}^n\setminus\bar p;\mathbb{Q})$ with the weight $k$ part of $$\mathbb{Q}[x,y_1,\ldots, y_m],\qquad |x|=0,\, |y_i|=n-1,$$ and an easy induction now shows that $$ \dim H_*(B_k(\mathbb{R}^n\setminus\bar p);\mathbb{Q})=\begin{cases}
\binom{m+i-1}{i}\quad &*=i(n-1), \,\,0\leq i\leq k\\
0\quad &\text{else.}
\end{cases}$$ (it is helpful to recall that $\binom{m+i-1}{i}$ is the number of ways to choose $i$ not necessarily distinct elements from a set of $m$ elements).

If $n$ is even, then the corresponding vector space is the weight $k$ part of $$\mathbb{Q}[x, \tilde y_1,\ldots, \tilde y_m]\otimes\Lambda[\tilde x, y_1,\ldots, y_m],\qquad |x|=0,\,|y_i|=|\tilde x|=n-1,\, |\tilde y_i|=2n-2.$$ Counting inductively in terms of less punctured Euclidean spaces, one finds that $$H_*(B_k(\mathbb{R}^n\setminus \bar p;\mathbb{Q})\cong \bigoplus_{l=0}^k\bigoplus_{j_1+\cdots+j_m=l} H_{*-l(n-1)}(B_{k-l}(\mathbb{R}^n);\mathbb{Q}),$$ from which it follows easily that $$\dim H_*(B_k(\mathbb{R}^n\setminus \bar p;\mathbb{Q})=\begin{cases}
\binom{m+i-1}{m-1}+\binom{m+i-2}{m-1} \quad &*=i(n-1), \, 0\leq i<k\\	
\binom{m+k-1}{m-1}\quad &*=k(n-1)\\
0\quad &\text{ else}
\end{cases}$$ (it is helpful to recall that $\binom{m+i-1}{m-1}$ is the number of ways to write $i$ as the sum of $m$ non-negative integers).

It should be clear from this example that Theorem \ref{with grading} reduces calculations to counting problem whenever $n$ is odd or the relevant compactly supported cohomology has no cup products.

\subsection{Punctured torus}
Since $H_c^{-*}(T^2\setminus \mathrm{pt};\mathbb{Q})\cong \widetilde H^{-*}(T^2;\mathbb{Q}),$ the relevant Lie algebra is isomorphic to $$\mathfrak{h}\oplus \mathbb{Q}\langle \tilde a, \tilde b, c\rangle$$ where $\mathfrak{h}=\mathbb{Q}\langle a, b, \tilde c\rangle$ as a vector space, $|a|=|b|=|\tilde c|=0$, $|\tilde a|=|\tilde b|=1$, $|c|=-1$, and the bracket is defined by the equation $$[a,b]=\tilde c.$$

The Lie homology of $\mathfrak{h}$ is calculated by the complex $$(\Lambda[ x, y, \tilde z], d(xy)=\tilde z),$$ (where for ease of notation we have set $x=\sigma a$ and so on), a basis for the homology of which is easily seen to be given by the image in homology of the set $\{1,x, y, x\tilde z,  y\tilde z, xy\tilde z\}.$ Thus we have an identification of $H_*(B_k(T^2\setminus \mathrm{pt});\mathbb{Q})$ with the weight $k$ part of $$\mathbb{Q}\langle 1,x,y,x\tilde z,y\tilde z,xy\tilde z\rangle\otimes \mathbb{Q}[\tilde x,\tilde y,z],\qquad |z|=0,\, |x|=|y|=|\tilde z|=1,\,|\tilde x|=|\tilde y|=2.$$ Counting, we find that $$\dim H_*(B_k(T^2\setminus\mathrm{pt});\mathbb{Q})=\begin{cases}
\frac{3i-1}{2}+1\quad &*=2i+1<k\\
\frac{3i}{2}+1\quad &*=2i<k\\
k+1\quad &*=k \text{ odd}\\
\frac{k}{2}+1\quad &*=k \text{ even}\\
0\quad &\text{else.}
\end{cases}$$

An amusing comparison can be seen by taking $k=2$ in the above formula, which yields $$H_*(B_2(T^2\setminus \mathrm{pt});\mathbb{Q})\cong \mathbb{Q}\oplus\mathbb{Q}^2[1]\oplus\mathbb{Q}^2[2].$$ On the other hand, from the preceding example, one calculates that $$H_*(B_2(\mathbb{R}^2\setminus \{p_1,p_2\};\mathbb{Q})\cong \mathbb{Q}\oplus\mathbb{Q}^3[1]\oplus\mathbb{Q}^3[2].$$ Thus, despite the fact that the punctured torus and the twice-punctured plane are homotopy equivalent, having $S^1\vee S^1$ as a common deformation retract, their configuration spaces are not homotopy equivalent.

\subsection{Real projective space}
Let $n$ be even, so that $\mathbb{RP}^n$ is non-orientable. Then, as a ring, $H_c^{-*}(\mathbb{RP}^n;\mathbb{Q})\cong\mathbb{Q}$, and the Lie homology of interest is $H^\Lie(\Lie(\mathbb{Q}[n-1]))\cong\mathbb{Q}\oplus \mathbb{Q}[n],$ whence, for $k>1$, $$H_*(B_k(\mathbb{RP}^n);\mathbb{Q}^w)=0.$$

As for the untwisted homology, we note that $H_c^{-*}(\mathbb{RP}^n;\mathbb{Q}^w)\cong \mathbb{Q}[-n]$ by Poincar\'{e} duality, so that the cup product map $H_c^{-*}(\mathbb{RP}^n;\mathbb{Q}^w)^{\otimes 2}\to H_c^{-*}(\mathbb{RP}^n;\mathbb{Q})$ is trivial for degree reasons. Thus $$H_c^{-*}(\mathbb{RP}^n; \Lie(\mathbb{Q}^w[n-1]))\cong \mathbb{Q}[-1] \oplus \mathbb{Q}[2n-2]$$ is abelian, so that $H_*(B_k(\mathbb{RP}^n);\mathbb{Q})$ is isomorphic to the weight $k$ part of $$\mathbb{Q}[x]\otimes \Lambda[\tilde y],\quad |x|=0,\, |\tilde y|=2n-1.$$ Hence for all $k>1$, $$H_*(B_k(\mathbb{RP}^n);\mathbb{Q})\cong\mathbb{Q}\oplus\mathbb{Q}[2n-1].$$

\subsection{Klein bottle, twisted}
Let $\mathbb{K}$ denote the Klein bottle. Then $H_c^{-*}(\mathbb{K};\mathbb{Q})\cong \mathbb{Q}\oplus\mathbb{Q}[-1]$, with the generator in degree zero acting as a unit for the multiplication. As a vector space, the Lie algebra in question is $\mathfrak{g}:=\mathbb{Q}\langle a, \tilde a, b, \tilde b\rangle$, where $|b|=0$, $|a|=|\tilde b|=1$, and $|\tilde a|=2$, and the bracket is defined by the equations $$[a,a]=\tilde a\qquad\qquad [a,b]=-\tilde b.$$ The subspace spanned by $\{b,\tilde b\}$ is an ideal realizing $\mathfrak{g}$ as an extension $$0\to \mathbb{Q}\langle b,\tilde b\rangle\to\mathfrak{g}\to \Lie(\mathbb{Q}\langle a\rangle)\to0,$$ so that we may avail ourselves of the Lyndon-Hochschild-Serre spectral sequence $$E^2_{p,q}\cong H^\Lie_p(\Lie(\mathbb{Q}\langle a\rangle);H^\Lie_q(\mathbb{Q}\langle b,\tilde b\rangle))\implies H^\Lie_{p+q}(\mathfrak{g}).$$ There are no differentials for degree reasons, and the $E^2$ page is computed as the homology of the complex $$0\to \mathbb{Q}\langle a\rangle[1]\otimes \Sym(\mathbb{Q}\langle b,\tilde b\rangle[1])\to \Sym(\mathbb{Q}\langle b,\tilde b\rangle[1])\to 0,$$ where the differential is the action of $a$. It follows that a basis for $H^\Lie(\mathfrak{g})$ is given by $\{\sigma a\otimes (\sigma\tilde b)^i,\sigma b\otimes(\sigma\tilde b)^j\mid i,j\geq0\}.$ Counting monomials of weight $k$, we find that $$H_*(B_k(\mathbb{K});\mathbb{Q}^w)\cong\begin{cases}
\mathbb{Q}[k]\oplus \mathbb{Q}[k+1]\qquad &k\text{ odd}\\
0\qquad\qquad &k\text{ even.}
\end{cases}$$

\subsection{Non-orientable surfaces}
Let $N_h=(\mathbb{RP}^2)^{\#h}$. Using the method of the previous example, one could proceed to obtain a general formula for the twisted homology of $B_k(N_h)$. Here we will determine the corresponding untwisted homology. We have $$H_c^{-*}(N_h;\mathbb{Q})\cong\mathbb{Q}\oplus\mathbb{Q}[-1]^{h-1},\qquad H_c^{-*}(N_h; \mathbb{Q}^w)\cong\mathbb{Q}[-1]^{h-1}\oplus\mathbb{Q}[-2],$$ so that there can be no cup products. Thus $H_*(B_k(N_h);\mathbb{Q})$ is the weight $k$ part of $$\mathbb{Q}[x, \tilde y_1,\ldots, \tilde y_{h-1}]\otimes \Lambda[\tilde z, w_1,\ldots, w_{h-1}],\quad |x|=0,\,|w_i|=1,\,|\tilde y_i|=2,\,|\tilde z|=3.$$ Counting inductively as in the example of punctured Euclidean space, we find that $$H_*(B_k(N_h);\mathbb{Q})\cong \bigoplus_{l=0}^k\bigoplus_{j_1+\cdots j_{h-1}=l}H_{*-l}(B_{k-l}(\mathbb{RP}^2);\mathbb{Q}),$$ from which it follows that $$\dim H_*(B_k(N_h);\mathbb{Q})=\begin{cases}
\binom{h+*-2}{h-2}+\binom{h+*-5}{h-2}\qquad &*\leq k\\
\binom{h+*-5}{h-2}\qquad &*=k+1\\
0\qquad &\text{else.}
\end{cases}$$

\subsection{Open and closed M\"{o}bius band} Let $\mathbb{M}$ denote the closed M\"{o}bius band. Then since $\mathbb{M}$ has the same compactly supported cohomology ring as the Klein bottle, our earlier calculation shows that 

$$\widetilde{H}_*(B_k(\mathbb{M},\partial \mathbb{M});\mathbb{Q}^w)\cong\begin{cases}
\mathbb{Q}[k]\oplus \mathbb{Q}[k+1]\qquad &k\text{ odd}\\
0\qquad\qquad &k\text{ even.}
\end{cases}$$ On the other hand, $H_c^{-*}(\mathbb{M};\mathbb{Q}^w)=0$ by Poincar\'{e} duality, so that $H_c^{-*}(\mathbb{M}; \Lie(\mathbb{Q}^w[1]))\cong H^{-*}(\mathbb{M};\mathbb{Q})[2]$ is abelian, and $\widetilde{H}_*(B_k(\mathbb{M},\partial \mathbb{M});\mathbb{Q})$ is the weight $k$ part of $$\mathbb{Q}[\tilde x]\otimes\Lambda[\tilde y],\quad |\tilde x|=2,\,|\tilde y|=3,$$ whence $$\widetilde{H}_*(B_k(\mathbb{M},\partial \mathbb{M});\mathbb{Q})\cong\begin{cases}
0\qquad & k\text{ odd}\\
\mathbb{Q}[k]\oplus\mathbb{Q}[k+1]\qquad & k\text{ even.}\\
\end{cases}$$

The situation with the corresponding open manifold is quite different. Since $(\mathring{\mathbb{M}})^+\cong\mathbb{RP}^2$, $H_c^{-*}(\mathring{\mathbb{M}};\mathbb{Q})=0$, so that $$H_*(B_k(\mathring{\mathbb{M}});\mathbb{Q}^w)=0$$ for all $k>1$. On the other hand, $H_c^{-*}(\mathring{\mathbb{M}};\mathbb{Q}^w)\cong\mathbb{Q}[-1]\oplus\mathbb{Q}[-2]$ by Poincar\'{e} duality, so that $H_*(B_k(\mathring{\mathbb{M}});\mathbb{Q})$ is the weight $k$ part of $$\mathbb{Q}[x]\otimes\Lambda[y],\qquad |x|=0,\,| y|=1,$$ whence $$H_*(B_k(\mathring{\mathbb{M}});\mathbb{Q})\cong \mathbb{Q}[0]\oplus\mathbb{Q}[1]$$ for all $k\geq1$.

\section{Two Formality Results} In this final section, we supply the remaining two ingredients in the proof of Theorem \ref{with grading}. Although unrelated to each other, these formality statements may be of independent interest.

\subsection{The $O(n)$-equivariant sphere}

Since the reduced homology of $S^n$ is one-dimensional, any choice of representative of a homology generator defines a quasi-isomorphism $$C_*(S^n)\simeq\mathbb{Z}\oplus\mathbb{Z}[n].$$ The goal of this section is to prove that, rationally, this equivalence can be made $O(n)$-equivariant.

\begin{theorem}\label{equivariant formality}
There is an equivalence of $O(n)$-modules $$C_*(S^n;\mathbb{Q})\simeq\mathbb{Q}\oplus\mathbb{Q}^{\det}[n].$$
\end{theorem}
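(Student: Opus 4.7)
The plan is to split off the trivial summand using the fixed basepoint $\infty \in S^n = (\mathbb{R}^n)^+$, and then to identify the reduced chains with a shifted determinant representation by a formality argument based on the t-structure on $O(n)$-modules.

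For the splitting, both the inclusion $i\colon \{\infty\} \hookrightarrow S^n$ and the constant map $r\colon S^n \to \{\infty\}$ are $O(n)$-equivariant, since $\infty$ is fixed under the standard action, and they satisfy $r \circ i = \mathrm{id}$. Passing to rational chains produces an $O(n)$-equivariant split monomorphism $\mathbb{Q} \to C_*(S^n;\mathbb{Q})$, which in the stable $\infty$-category $\Mod_{O(n)}(\Ch_\mathbb{Q})$ yields a splitting
$$C_*(S^n;\mathbb{Q}) \simeq \mathbb{Q} \oplus \widetilde{C}_*(S^n;\mathbb{Q}).$$
It thus remains to identify $\widetilde{C}_*(S^n;\mathbb{Q})$ with $\mathbb{Q}^{\det}[n]$.

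For this, I use that $\Mod_{O(n)}(\Ch_\mathbb{Q}) \simeq \mathrm{Fun}(BO(n),\Ch_\mathbb{Q})$ inherits a pointwise t-structure from $\Ch_\mathbb{Q}$, whose heart consists of functors $BO(n) \to \mathrm{Vect}_\mathbb{Q}$. Any functor from an $\infty$-groupoid to a 1-category factors through the fundamental groupoid, so the heart reduces to representations of $\pi_1(BO(n)) = \pi_0(O(n)) = \mathbb{Z}/2$ on $\mathbb{Q}$-vector spaces. Because the underlying complex of $\widetilde{C}_*(S^n;\mathbb{Q})$ has rational homology concentrated in degree $n$, the shift $\widetilde{C}_*(S^n;\mathbb{Q})[-n]$ lies in this heart, and is therefore equivalent as an $O(n)$-module to its $H_0$, which is $H_n(S^n;\mathbb{Q})$. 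Finally, $A \in O(n) \subset O(n+1)$ acts on $H_n(S^n;\mathbb{Q}) \cong \mathbb{Q}$ by the topological degree of $A$ acting on $S^n$, which equals $\det(A)$; hence $H_n(S^n;\mathbb{Q}) \cong \mathbb{Q}^{\det}$.

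The main point requiring attention is the identification of the heart of the pointwise t-structure. The rational homotopy type of $BO(n)$ involves Pontryagin and Euler classes in many positive degrees, but these contribute nothing here since functors to a 1-category see only the fundamental groupoid $B\mathbb{Z}/2$. Granted this, the argument is driven entirely by the existence of the fixed basepoint and the concentration of $\widetilde{H}_*(S^n;\mathbb{Q})$ in a single degree, and requires no detailed knowledge of the $O(n)$-equivariant cell structure of $S^n$.
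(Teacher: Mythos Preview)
Your argument is correct and takes a genuinely different, and considerably more elementary, route than the paper.

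The paper first treats the $SO(n)$-case by modeling the Borel fibration $S^n \to E(\hat\xi) \to BSO(n)$ in rational homotopy theory, upgrading the resulting Sullivan model to an $A_\infty$-$S$-module structure on $C^{-*}(E(\hat\xi);\mathbb{Q})$, and then invoking Koszul duality between $S$-modules and $\Lambda$-modules (following Franz and Goresky--Kottwitz--MacPherson) to deduce that $C^{-*}(S^n;\mathbb{Q})$ is trivial as an $A_\infty$-$\Lambda$-module; the $O(n)$-statement is then obtained by inducing up along $SO(n) \subset O(n)$. By contrast, you bypass all of this machinery by observing that the pointwise t-structure on $\mathrm{Fun}(BO(n),\Ch_\mathbb{Q})$ has heart $\mathrm{Fun}(BO(n),\mathrm{Vect}_\mathbb{Q}) \simeq \mathrm{Rep}_\mathbb{Q}(\mathbb{Z}/2)$, the last equivalence because functors into a $1$-category factor through the fundamental groupoid. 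Since $\widetilde{C}_*(S^n;\mathbb{Q})$ has homology concentrated in a single degree, it is forced into the heart after shifting, and is therefore determined by the $\mathbb{Z}/2$-action on $H_n(S^n;\mathbb{Q})$. Your approach makes transparent that the result is really a consequence of two simple facts---the fixed basepoint and the one-dimensionality of $\widetilde{H}_*(S^n;\mathbb{Q})$---and would apply verbatim to any $O(n)$-space with these properties. The paper's approach, while heavier, exhibits the formality as an instance of Koszul duality, which may be conceptually valuable in other contexts and might be better suited to situations where the homology is not concentrated in a single degree. One small expository point: the aside ``$A \in O(n) \subset O(n+1)$'' is unnecessary, since in the paper's conventions $S^n = (\mathbb{R}^n)^+$ carries the $O(n)$-action directly.
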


The proof has three main ingredients, the first of which is rational homotopy theory. We consider the Borel construction $$\hat\xi:\xymatrix{\displaystyle ESO(n)\times_{SO(n)}S^n\ar[r] &BSO(n)}$$ where $SO(n)$ acts on $S^n\cong(\mathbb{R}^n)^+$ by extension of its canonical action on $\mathbb{R}^n$. In other words, $\hat\xi$ is the fiberwise one-point compactification of the universal oriented $n$-plane bundle $\xi$. We denote by $E(\hat\xi)$ the total space of this sphere bundle.

Sphere bundles over simply connected spaces admit particularly simple rational descriptions. According to \cite[15(a),\,15(b)]{FHT}, we have the following commutative diagram, whose terms we will explain presently:

$$\xymatrix{
(\Sym(W_n),d_1)\ar[r]^-\sim&A_{PL}(S^n)\ar[r]^-\sim&C^{-*}(S^n;\mathbb{Q})\\
(S\otimes\Sym(W_n),d_1+d_2)\ar[u]\ar[r]^-\sim\ar[u]&A_{PL}(E(\hat\xi))\ar[r]^-\sim\ar[u]&C^{-*}(E(\hat\xi);\mathbb{Q})\ar[u]\\
S\ar[r]^-\sim\ar[u]&A_{PL}(BSO(n))\ar[u]\ar[r]^-\sim&C^{-*}(BSO(n);\mathbb{Q})\ar[u]
}$$

In this diagram, \begin{enumerate}
\item $S:=H^{-*}(BSO(n);\mathbb{Q})$ is a polynomial algebra;
\item $A_{PL}$ denotes the functor of PL de Rham forms;
\item the horizontal arrows in the righthand column are components of the natural quasi-isomorphism $\oint: A_{PL}\to C^{-*}$ given by integrating forms over simplices;
\item each term appearing in the leftmost column is a Sullivan model for the corresponding space;
\item $W_n$ denotes the graded vector space $$W_n=\begin{cases}
\mathbb{Q}\langle x_{-n}\rangle\quad &n\text{ odd}\\
\mathbb{Q}\langle x_{-n}, y_{-2n+1}\rangle\quad &n\text{ even}
\end{cases}$$ the differential $d_1$ is defined by the equation $d_1(y)=x^2$, and the differential $d_2$ is specified by its value on $y$, which is an element of $P$ determined by the bundle $\hat\xi$.
\end{enumerate} We direct the reader to \cite{Borel} for more on (i), and to \cite[10(c),\,10(e),\,12,\,15(b)]{FHT}, respectively, for more on (ii)-(v). The reader is advised that, although we have maintained our convention of homological grading, the prevailing convention in rational homotopy theory is cohomological.

The second ingredient is the theory of $A_\infty$-algebras and their modules, for which we refer the reader to \cite{Keller}. The relevance here is that, according to \cite[3.1]{BG}, the integration map $\oint$ extends to a map of $A_\infty$-algebras (referred to in \cite{BG} as ``strongly homotopic differential algebras''), so that $C^{-*}(E(\hat\xi);\mathbb{Q})$ becomes an $A_\infty$-$S$-module via the bottom composite in the above diagram.

\begin{proposition}\label{dual side}
There is a quasi-isomorphism of $A_\infty$-$S$-modules $$\xymatrix{S\oplus S[-n]\ar[r]^-\sim &C^{-*}(E(\hat\xi);\mathbb{Q}).}$$
\end{proposition}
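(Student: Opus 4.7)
The plan is to exploit the explicit Sullivan model from the diagram preceding the proposition, the key input being the vanishing of the perturbation $d_2$ for the bundle $\hat\xi$.

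First I would verify that $d_2$ vanishes rationally. Since $\hat\xi$ is the fiberwise one-point compactification of $\xi$, it is canonically identified with the unit sphere bundle of the rank-$(n+1)$ oriented vector bundle $\xi \oplus \underline{\mathbb{R}}$, the section at infinity corresponding to the $+1$ vector in the trivial summand. The perturbation $d_2$ in the Sullivan model of a sphere bundle records its rational characteristic classes: for $n$ odd the term $d_2(x) \in S^{n+1}$ is the Euler class of this bundle, while for $n$ even the term $d_2(y) \in S^{2n}$ records the square of the fiberwise fundamental class, i.e. the self-intersection of the section at infinity. In both cases the relevant characteristic class is $e(\xi \oplus \underline{\mathbb{R}}) = e(\xi)\cdot e(\underline{\mathbb{R}}) = 0$ by multiplicativity, since $\underline{\mathbb{R}}$ is trivial.

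With $d_2 = 0$ established, the Sullivan model factors as a tensor product of DG $S$-algebras $(S, 0) \otimes (\Sym(W_n), d_1)$, where $(\Sym(W_n), d_1)$ is the minimal Sullivan model of $S^n$. Choosing cocycle representatives for the two nonzero cohomology classes (namely $1$ and $x$) yields a chain-level quasi-isomorphism $\mathbb{Q} \oplus \mathbb{Q}[-n] \to (\Sym(W_n), d_1)$; tensoring over $\mathbb{Q}$ with $S$ produces a strict DG $S$-module quasi-isomorphism $S \oplus S[-n] \to (S \otimes \Sym(W_n), d_1)$. Composing with the zigzag $(S \otimes \Sym(W_n), d_1 + d_2) \simeq A_{PL}(E(\hat\xi)) \xrightarrow{\oint} C^{-*}(E(\hat\xi);\mathbb{Q})$ from the diagram---the first link strict and the second the $A_\infty$-quasi-isomorphism supplied by integration---yields the desired quasi-isomorphism of $A_\infty$-$S$-modules, since the $A_\infty$-$S$-module structure on $C^{-*}(E(\hat\xi);\mathbb{Q})$ is by construction transported along $\oint$ from the strict DG $S$-module structure on $A_{PL}(E(\hat\xi))$.

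The chief obstacle is the precise identification of the perturbation $d_2$ with characteristic classes of $\hat\xi$, particularly controlling the term $d_2(y)$ in the even-dimensional case. Once this topological input is secured, the remaining construction is a routine assembly of strict and $A_\infty$ quasi-isomorphisms in the homotopy theory of $A_\infty$-$S$-modules.
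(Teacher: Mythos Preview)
Your approach is correct when $n$ is odd but contains a genuine gap when $n$ is even: the perturbation $d_2(y)$ does \emph{not} vanish for the bundle $\hat\xi$. To see this, take $\bar x\in H^n(E(\hat\xi);\mathbb{Q})$ to be the image of the Thom class of $\xi$ under the collapse $E(\hat\xi)\to\mathrm{Th}(\xi)$; then the standard identity $u^2=u\smile e(\xi)$ for Thom classes gives $\bar x^2=e(\xi)\,\bar x$ in $H^{2n}(E(\hat\xi);\mathbb{Q})$, and since the universal Euler class $e(\xi)\in H^n(BSO(n);\mathbb{Q})$ is nonzero for $n$ even, the ring $H^*(E(\hat\xi);\mathbb{Q})$ is \emph{not} isomorphic to $S\otimes H^*(S^n;\mathbb{Q})$. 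Equivalently, no change of variable makes $d_2(y)$ vanish: completing the square one finds $d_2(y)=-e(\xi)^2/4$, while in the Thom-class coordinates $d_2(y)=-e(\xi)\,x$. Your identification of the relevant characteristic class with $e(\xi\oplus\underline{\mathbb{R}})$ conflates the normal bundle of the section $s_\infty$ (which is $\xi$, with nonvanishing Euler class) with the ambient rank-$(n+1)$ bundle.

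The paper's argument does not require $d_2=0$. All that is needed is $D(x)=0$, which follows from the existence of the section (equivalently, from $e(\xi\oplus\underline{\mathbb{R}})=0$, the part of your argument that is correct); this alone makes the $S$-linear map $(a,b)\mapsto a+bx$ a chain map, and the Gysin sequence then shows it is a quasi-isomorphism irrespective of $d_2(y)$. Your proof can be repaired by replacing the tensor-factorization step with this direct inclusion, but that is precisely the paper's route.
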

\begin{proof} 
The fiberwise basepoint furnishes $\hat\xi$ with a section, and the Gysin sequence now implies that the top map in the commuting diagram $$\xymatrix{
S\oplus S[-n]\ar[r]&(S\otimes\Sym(W_n),d_1+d_2)\\
S\ar[u]\ar@{=}[r]&S\ar[u]
}$$ is a quasi-isomorphism. Combining this diagram with the previous yields the result.
\end{proof}

The third ingredient is the Koszul duality between modules for the symmetric algebra $S$ and modules for the exterior algebra $\Lambda$ on the same generators with degrees shifted by 1. According to \cite{Borel}, there is a Hopf algebra isomorphism $\Lambda\cong H_*(SO(n);\mathbb{Q})$, where the latter carries the Pontryagin product induced by the group structure of $SO(n)$. Koszul duality is the algebraic avatar of the correspondence between $SO(n)$-spaces and spaces fibered over $BSO(n)$ witnessed by the Borel construction. There are many variations on this theme; the relevant facts for our purposes are the following, which are extracted from \cite[1.2,\,3.1]{Franz}; see also \cite{GKM}. Our notation differs slightly from that in \cite{Franz}, and we maintain the terminology of $A_\infty$-modules rather than ``weak modules.''

\begin{theorem}[(Franz, Goresky-Kottwitz-MacPherson)]\label{KD}
There is a functor $\mathbf{h}$ from $A_\infty$-S-modules to $A_\infty$-$\Lambda$-modules with the following properties. \begin{enumerate}
\item Let $\pi:X\to BSO(n)$ be a space over $BSO(n)$. Then $\mathbf{h}(C^{-*}(X))$ and $C^{-*}(\hofiber(\pi))$ are connected by a zig-zag of natural quasi-isomorphisms of $A_\infty$-$\Lambda$-modules.
\item Let $V$ be a graded vector space. Then $\mathbf{h}(S\otimes V)\cong V$, where $V$ is regarded as a trivial $A_\infty$-$\Lambda$-module.
\end{enumerate}
\end{theorem}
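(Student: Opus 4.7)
Since the theorem is attributed to Franz and Goresky--Kottwitz--MacPherson, my plan is to sketch the standard Koszul-duality construction of $\mathbf{h}$ and then indicate how each of the two properties is verified. Let $K=S\otimes\Lambda$ denote the Koszul complex with its canonical $(S,\Lambda)$-bimodule structure and differential $\sum_i x_i\otimes \iota_{y_i}$, taken with respect to dual bases under the pairing $\Lambda\cong\Ext_S^*(\mathbb{Q},\mathbb{Q})$. Then $K$ is a free resolution of $\mathbb{Q}$ as an $S$-module.

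On the subcategory of strict DG $S$-modules I would simply define $\mathbf{h}(M)=M\otimes_S K$, which acquires a DG $\Lambda$-module structure via the right factor of $K$. To promote this to the $A_\infty$ setting, I would use the bar/cobar formalism: an $A_\infty$-$S$-module structure on $M$ is the same as a twisting cochain $BS\to\mathrm{End}(M)$, and convolving with the Koszul twisting cochain $\tau:\Lambda[-1]\to S$ produces an $A_\infty$-$\Lambda$-module structure on the underlying complex of $M\otimes_S K$. This is the construction carried out in \cite{Franz}; functoriality and naturality are automatic from the bar-cobar adjunction.

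Property (ii) would then follow by direct computation. For a free $S$-module $M=S\otimes V$, one has $\mathbf{h}(M)\cong V\otimes K$, and the augmentation $K\to\mathbb{Q}$ is a quasi-isomorphism of complexes $V\otimes K\xrightarrow{\sim}V$. Every higher operation of the resulting $A_\infty$-$\Lambda$-module structure factors through an element of the augmentation ideal of $S$ acting on the free summand, so it is null-homotopic after descending to the quasi-isomorphism class of $V$. Hence $\mathbf{h}(S\otimes V)\simeq V$ with trivial $A_\infty$-$\Lambda$-action.

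The substance of the theorem, and the main obstacle, is (i), which is a chain-level $A_\infty$ refinement of the Eilenberg--Moore theorem. Write $F=\hofiber(\pi)$ and note that $F$ carries a natural $SO(n)$-action, whence $C^{-*}(F;\mathbb{Q})$ is an $A_\infty$-module over $H_*(SO(n);\mathbb{Q})\cong\Lambda$ through the Pontryagin product. The classical Eilenberg--Moore spectral sequence furnishes a quasi-isomorphism $\Tor^S(\mathbb{Q},C^{-*}(X))\simeq C^{-*}(F)$, whose left-hand side is computed by $\mathbf{h}(C^{-*}(X))$. Two tasks remain: first, to identify the $\Lambda$-action on the right-hand side arising from the $SO(n)$-action on $F$ with the $\Lambda$-action on the left built from the Koszul bimodule $K$, which uses the Hopf algebra identification $\Lambda\cong H_*(SO(n);\mathbb{Q})$; and second, to upgrade the Eilenberg--Moore zig-zag to a zig-zag of $A_\infty$-$\Lambda$-modules, which is carried out by replacing $C^{-*}(X)$ with a semifree $S$-model (available via Sullivan-type constructions applied to the $A_\infty$ refinement of $\oint$) and transferring the higher structure via the homological perturbation lemma. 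Naturality in $X$ is inherited from the naturality of pullback along $\pi$ and of the perturbation-lemma output.
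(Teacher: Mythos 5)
The paper does not prove Theorem~\ref{KD}: it is extracted and cited verbatim from Franz \cite[1.2, 3.1]{Franz} (see also \cite{GKM}), so there is no internal proof to compare your argument against. Your sketch is a plausible reconstruction of the standard Koszul-duality machinery, and the ingredients you identify---the Koszul $(S,\Lambda)$-bimodule $K=S\otimes\Lambda$, the twisting-cochain formulation for the $A_\infty$ upgrade, the Eilenberg--Moore comparison with the homotopy fiber, the Hopf-algebra identification $\Lambda\cong H_*(SO(n);\mathbb{Q})$, and the homological perturbation lemma for transferring structure across quasi-isomorphisms---are exactly those that appear in Franz's proof. Two small caveats. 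First, with your definition $\mathbf{h}(M)=M\otimes_S K$, property (ii) delivers a quasi-isomorphism $\mathbf{h}(S\otimes V)=V\otimes K\simeq V$ rather than an isomorphism $\cong$ as written in the statement; this is consistent with how the theorem is actually invoked in the proof of Proposition~\ref{special orthogonal} (where $\simeq$ is used), but you should be explicit that you are working in the homotopy category of $A_\infty$-$\Lambda$-modules, or else use the model of $\mathbf{h}$ for which (ii) is an equality on the nose. Second, your claim that the residual higher $\Lambda$-operations on $V$ vanish because they ``factor through the augmentation ideal'' is a heuristic; making it rigorous requires either a direct computation in the Koszul complex or a citation of the explicit formula for $\mathbf{h}$ on extended modules, which is where Franz's proof actually does the work.
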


\begin{proposition}\label{special orthogonal}
There is an equivalence of $SO(n)$-modules $$C_*(S^n;\mathbb{Q})\simeq \mathbb{Q}\oplus\mathbb{Q}[n],$$ where the latter is regarded as a trivial $SO(n)$-module.
\end{proposition}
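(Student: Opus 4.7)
The plan is to apply the Koszul-duality functor $\mathbf{h}$ of Theorem \ref{KD} to the quasi-isomorphism supplied by Proposition \ref{dual side}. By construction, the Borel construction $\pi\colon E(\hat\xi)\to BSO(n)$ has homotopy fiber $S^n$, so Theorem \ref{KD}(i) produces a zig-zag of quasi-isomorphisms of $A_\infty$-$\Lambda$-modules
\[
\mathbf{h}\bigl(C^{-*}(E(\hat\xi);\mathbb{Q})\bigr)\simeq C^{-*}(S^n;\mathbb{Q}).
\]
On the other hand, writing $S\oplus S[-n]\cong S\otimes(\mathbb{Q}\oplus\mathbb{Q}[-n])$ and invoking Theorem \ref{KD}(ii) gives $\mathbf{h}(S\oplus S[-n])\cong\mathbb{Q}\oplus\mathbb{Q}[-n]$ as a trivial $A_\infty$-$\Lambda$-module. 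Composing these identifications with the image under $\mathbf{h}$ of the quasi-isomorphism of Proposition \ref{dual side} will yield a quasi-isomorphism
\[
C^{-*}(S^n;\mathbb{Q})\simeq \mathbb{Q}\oplus\mathbb{Q}[-n]
\]
of $A_\infty$-$\Lambda$-modules, with target trivial.

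Next, I would pass to chains by linear duality; since $\Lambda$ is graded-cocommutative as a Hopf algebra, left and right module structures are interchangeable via the antipode, so dualization produces a quasi-isomorphism of $A_\infty$-$\Lambda$-modules $C_*(S^n;\mathbb{Q})\simeq \mathbb{Q}\oplus\mathbb{Q}[n]$ with trivial target. The last step is to translate this into an equivalence of $SO(n)$-modules in the sense of the paper, i.e., of modules over the $E_1$-algebra $C_*(SO(n);\mathbb{Q})$. Via the Hopf-algebra isomorphism $\Lambda\cong H_*(SO(n);\mathbb{Q})$ recalled just before Theorem \ref{KD}, the two module categories agree provided $C_*(SO(n);\mathbb{Q})$ is formal as an $A_\infty$-algebra. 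This formality is classical: $H^*(SO(n);\mathbb{Q})$ is an exterior algebra on odd-degree generators, and an $H$-space whose cohomology has this form is rationally formal, forcing formality of the chain $E_1$-algebra as well.

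The hard part will be the bookkeeping in this final translation step: the Koszul duality of Franz and Goresky-Kottwitz-MacPherson is formulated in terms of strict $A_\infty$-modules over a chosen dg model for $\Lambda$, whereas the paper's $SO(n)$-modules live in the $\infty$-category $\mathrm{Fun}(BSO(n),\Ch_\mathbb{Q})$. Articulating the equivalence between these two perspectives, given formality of $C_*(SO(n);\mathbb{Q})$, is where any genuine work lies; once that dictionary is set up, the result drops out of the cited theorems with no further calculation.
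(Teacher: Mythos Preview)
Your proposal is correct and follows essentially the same route as the paper: apply the Koszul duality functor $\mathbf{h}$ of Theorem \ref{KD} to the quasi-isomorphism of Proposition \ref{dual side}, then translate the resulting $A_\infty$-$\Lambda$-module equivalence into an equivalence of $SO(n)$-modules. The paper organizes the two steps you flag a little differently: it reduces to the cochain statement at the outset by observing that both objects are dualizable (avoiding your dualization of $A_\infty$-modules), and for the translation it cites \cite[4.3.3.17]{HA} and \cite[4.3,\,6.2]{Keller} to identify the relevant homotopy categories, together with the explicit $A_\infty$-quasi-isomorphism $\Lambda\to C_*(SO(n);\mathbb{Q})$ from \cite{Franz}, rather than appealing to formality of $SO(n)$ as a separate fact.
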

\begin{proof}
Both of the $SO(n)$-modules in question are dualizable objects of $\Ch_\mathbb{Q}$, so it suffices to exhibit an $SO(n)$-equivalence $C^{-*}(SO(n);\mathbb{Q})\simeq \mathbb{Q}\oplus\mathbb{Q}[-n]$ between the duals. By \cite[4.3.3.17]{HA}, the homotopy category of the $\infty$-category of $SO(n)$-modules coincides with the homotopy category obtained from the model category of $C_*(SO(n);\mathbb{Q})$-modules equipped with the usual model structure on modules over a differential graded algebra. By \cite[4.3]{Keller}, this homotopy category in turn coincides with the full subcategory of the homotopy category of $A_\infty$-$C_*(SO(n);\mathbb{Q})$-modules spanned by the ``homologically unital modules,'' so that, since the modules in question are homologically unital, it will suffice to produce to an isomorphism in the homotopy category of $A_\infty$-modules. By \cite[6.2]{Keller}, it suffices to produce an isomorphism in the homotopy category of $A_\infty$-$\Lambda$-modules after restricting along the $A_\infty$-quasi-isomorphism $\Lambda\to C_*(SO(n);\mathbb{Q})$ of \cite{Franz}. For this, we apply the Koszul duality of Theorem \ref{KD} to the $A_\infty$-quasi-isomorphism of Proposition \ref{dual side}, yielding the zig-zag of $A_\infty$-quasi-isomorphisms $$\xymatrix{\mathbb{Q}\oplus\mathbb{Q}[-n]\simeq \mathbf{h}(S\oplus S[-n])\ar[r]&\mathbf{h}(C^{-*}(E(\hat\xi);\mathbb{Q}))\simeq C^{-*}(S^n;\mathbb{Q}).}$$
\end{proof}

\begin{proof}[Proof of Theorem \ref{equivariant formality}]

We explain the following diagram of $O(n)$-modules: $$\xymatrix{
\mathbb{Q}\oplus\mathbb{Q}^{\det}[n]\ar[r]&\mathbb{Q}[C_2]\oplus\mathbb{Q}[C_2][n]\simeq \mathbb{Q}[C_2]\otimes C_*(S^n;\mathbb{Q})\ar[r]& C_*(S^n;\mathbb{Q}).
}$$ 

\begin{enumerate}
\item Let $e$ and $\sigma$ denote the basis elements of $\mathbb{Q}[C_2]$ corresponding to the identity and generator, respectively. The lefthand map sends $1\in\mathbb{Q}$ to $\frac{e+\sigma}{2}$ and $1\in\mathbb{Q}^{\det}$ to $\frac{e-\sigma}{2}$. This is a map of $C_2$-modules and therefore of $O(n)$-modules, since $O(n)$ acts on both domain and codomain by restriction along the determinant.
\item Fixing a choice of isomorphism $O(n)\cong C_2\ltimes SO(n)$, we obtain an isomorphism $C_*(O(n);\mathbb{Q})\cong \mathbb{Q}[C_2]\otimes C_*(SO(n);\mathbb{Q})$ of $O(n)$-modules. The middle equivalence is now obtained by applying the functor of induction from $SO(n)$ to $O(n)$ to the equivalence of Proposition \ref{special orthogonal}.
\item The righthand arrow is the counit of the induction-restriction adjunction.
\end{enumerate}
Applying homology yields an isomorphism, completing the proof.
\end{proof}

\subsection{Two-step nilpotent Lie algebras}

In this section, we prove that the Lie algebras of interest to us are formal.

\begin{proposition}\label{Lie formality}
Let $K$ be either $\mathbb{Q}$ or $\mathbb{Q}^{\sgn}$. For any $r\in \mathbb{Z}$ and any manifold $M$, the Lie algebra $\Map^{C_2}(\widetilde M^+,\Lie(K[r]))$ is formal.
\end{proposition}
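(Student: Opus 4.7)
The plan is to apply the homotopy transfer theorem (HTT), exploiting that $\Lie(K[r])$ is at most $2$-step nilpotent. First, by Proposition \ref{cotensor identification} I would identify $\Map^{C_2}(\widetilde{M}^+, \Lie(K[r]))$ with the $C_2$-invariants of the DGLA
\[
L := A_{PL}(\widetilde{M}^+) \otimes \Lie(K[r]).
\]
When $r$ is even, $\Lie(K[r]) = K[r]$ is abelian, $L$ carries the trivial bracket, and formality collapses to formality of a chain complex over a field, which is automatic. When $r$ is odd, $\Lie(K[r]) = K[r] \oplus K^{\otimes 2}[2r]$ with the second summand one-dimensional and central; thus $L$ is $2$-step nilpotent, with central derived ideal $[L,L] \subseteq Z(L) := A_{PL}(\widetilde{M}^+) \otimes K^{\otimes 2}[2r]$.

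Next I would choose a $C_2$-equivariant deformation retraction of chain complexes from $A_{PL}(\widetilde{M}^+)$ onto $H^*(\widetilde{M}^+;\mathbb{Q})$, together with a chain contracting homotopy $h$; such data exists because $C_2$ acts semisimply on $\mathbb{Q}$-vector spaces. Tensoring with $\mathrm{id}_{\Lie(K[r])}$ yields an analogous $C_2$-equivariant deformation retraction of $L$ onto $H(L) \cong H^*(\widetilde{M}^+;\mathbb{Q}) \otimes \Lie(K[r])$, and crucially the extended homotopy $\tilde h := h \otimes \mathrm{id}$ preserves the central subcomplex $Z(L)$ by construction.

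Applying HTT, the cohomology $H(L)$ inherits a transferred $L_\infty$-structure that is $L_\infty$-quasi-isomorphic to $L$; each $n$-ary operation $l_n$ is computed as a sum over binary rooted trees of iterated brackets in $L$ alternating with applications of $\tilde h$. For each $n \geq 3$, every such tree contains at least one internal edge along which $\tilde h$ is applied to an element of $[L,L] \subseteq Z(L)$; since $\tilde h(Z(L)) \subseteq Z(L)$ and bracketing with a central element vanishes, the entire tree contributes zero. Hence all higher operations are trivial, the transferred structure is a strict DGLA — namely $H(L)$ with its induced bracket — and we obtain an equivalence $L \simeq H(L)$ in the $\infty$-category of DGLAs. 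Passing to $C_2$-invariants, an exact functor on $\mathbb{Q}$-complexes that preserves quasi-isomorphisms, delivers the desired formality.

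The main obstacle is cleanly verifying the vanishing of the transferred higher operations from the tree-sum formula; this reduces to the single observation that in a $2$-step nilpotent DGLA every iterated bracket lies in the center, which is preserved by our chosen contracting homotopy. All remaining ingredients — the cotensor identification, existence of equivariant splittings in characteristic zero, and exactness of invariants — are routine.
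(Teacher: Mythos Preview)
Your argument is correct and complete. The paper's own proof takes a different route: it first proves a technical lemma (Proposition \ref{technical lemma}) showing that any extension $0\to\mathfrak{h}\to\mathfrak{e}\to\mathfrak{g}\to 0$ with $\mathfrak{g}$ and $\mathfrak{h}$ abelian, trivial action, and a splitting at the chain level is formal, by directly constructing a strict DGLA quasi-isomorphism $\mathfrak{e}\to H(\mathfrak{e})$ via the classification of such extensions by $H^2_{\Lie}(\mathfrak{g},\mathfrak{h})$. It then observes that tensoring with $A_{PL}(\widetilde{M}^+)$ and taking $C_2$-invariants preserve the hypotheses of this lemma.

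Interestingly, the paper includes a remark immediately after its proof sketching exactly your HTT argument as an alternative ``conceptual understanding'' of why nilpotence damps out the potential obstructions from Massey products in $A_{PL}$. So your approach is not only valid but anticipated by the author. The trade-off: the paper's cocycle argument yields a \emph{strict} DGLA quasi-isomorphism without passing through $L_\infty$-morphisms and requires no machinery beyond elementary Lie algebra cohomology; your HTT argument is more uniform and makes the role of two-step nilpotence transparent, at the cost of invoking the tree formulas and verifying that the chosen homotopy respects the central filtration (which you do). Both handle the $C_2$-equivariance in the same way, by working with equivariant data throughout and using exactness of invariants in characteristic zero.
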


The proof will rely on the following technical result.

\begin{proposition}\label{technical lemma}
Let $$0\to\mathfrak{h}\to\mathfrak{e}\to\mathfrak{g}\to0$$ be an exact sequence of Lie algebras in $\Ch_\mathbb{Q}$ with $\mathfrak{g}$ and $\mathfrak{h}$ abelian. Assume that $\mathfrak{g}$ acts trivially on $\mathfrak{h}$ and that the underlying sequence of chain complexes splits. Then $\mathfrak{e}$ is formal.
\end{proposition}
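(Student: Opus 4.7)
The plan is to apply the homotopy transfer theorem for $L_\infty$-algebras to a contraction of $\mathfrak{e}$ onto its homology, and then to verify that the transferred higher brackets all vanish. The first step would be to unpack the algebraic structure of $\mathfrak{e}$: using the chain-level splitting, identify $\mathfrak{e} \cong \mathfrak{g} \oplus \mathfrak{h}$ as chain complexes. The hypotheses that both summands are abelian, that $\mathfrak{h}$ is an ideal, and that $\mathfrak{g}$ acts trivially on $\mathfrak{h}$ together force the bracket to take the form $[(a,b),(a',b')] = (0,\omega(a,a'))$ for some antisymmetric chain map $\omega\colon \mathfrak{g}\otimes\mathfrak{g}\to\mathfrak{h}$. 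Two structural consequences will be crucial: every nonzero bracket lands in $\mathfrak{h}$, and any bracket having at least one input in $\mathfrak{h}$ vanishes identically.

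Next one would construct a suitable deformation retract. Choose strong deformation retracts of $\mathfrak{g}$ and $\mathfrak{h}$ onto their respective homologies (possible because every chain complex over a field is formal) and combine them by direct sum into a contraction of $\mathfrak{e}$ onto $H(\mathfrak{e}) = H(\mathfrak{g}) \oplus H(\mathfrak{h})$ whose contracting homotopy $h$ respects the $\mathfrak{g}/\mathfrak{h}$ decomposition. The homotopy transfer theorem then endows $H(\mathfrak{e})$ with an $L_\infty$-structure $\{l_k\}_{k\geq 2}$ and provides an $L_\infty$-quasi-isomorphism down to $\mathfrak{e}$. Explicitly, each $l_k$ is a sum over rooted binary trees with $k$ inputs, with internal vertices decorated by the bracket of $\mathfrak{e}$ and internal edges decorated by $h$; the operation $l_2$ is the induced bracket on homology.

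The claim is then that $l_k = 0$ for all $k \geq 3$: any such tree contains an internal edge carrying a term of the form $h([x,y])$, which lies in $\mathfrak{h}$ since brackets land there and $h$ preserves the decomposition; this is then fed into a subsequent bracket, which annihilates it. Thus the transferred $L_\infty$-structure is strict, and $H(\mathfrak{e})$ equipped with zero differential and its inherited Lie bracket is $L_\infty$-quasi-isomorphic to $\mathfrak{e}$. In characteristic zero such an $L_\infty$-quasi-isomorphism refines to a zig-zag of strict dg Lie algebra quasi-isomorphisms, establishing formality. The main obstacle is the careful setup of the transfer machinery---keeping track of the tree combinatorics and signs---but once that is in place, the vanishing of the higher brackets is a one-line observation from the two structural properties of $\omega$ noted above.
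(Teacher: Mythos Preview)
Your argument is correct, but it follows a different route from the paper's own proof. The paper argues cohomologically: after fixing the splitting $\mathfrak{e}\cong\mathfrak{g}\oplus\mathfrak{h}$ and the cocycle $f$, it observes that the difference $\bar\psi\circ f_*\circ\varphi^{\wedge2}-f$ is a Chevalley--Eilenberg coboundary in the complex computing $H^*_{\Lie}(\mathfrak{g},\mathfrak{h})$; since extensions are classified by $H^2_{\Lie}(\mathfrak{g},\mathfrak{h})$, one may adjust the splitting so that $f=\bar\psi\circ f_*\circ\varphi^{\wedge2}$, after which the direct-sum map $(\varphi,\psi)$ is a \emph{strict} Lie algebra quasi-isomorphism $\mathfrak{e}\to H(\mathfrak{e})$. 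This avoids the homotopy transfer machinery entirely and produces a single honest dg Lie map rather than a zig-zag. Your approach, by contrast, is precisely the one the paper sketches in the remark immediately following the proof: invoke homotopy transfer and note that two-step nilpotence kills the higher brackets. The trade-off is that the paper's argument is more elementary and self-contained, whereas yours is more conceptual and makes the underlying reason for formality (nilpotence damping out any would-be Massey-type data) transparent---at the cost of importing the tree formulas and the char-zero rectification of $L_\infty$-quasi-isomorphisms.
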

\begin{proof}
The hypotheses imply that the bracket on $\mathfrak{e}\cong\mathfrak{g}\oplus\mathfrak{h}$ is given by $$[(g_1,h_1),(g_2,h_2)]=f(g_1,g_2)$$ for some (not uniquely defined) map $f:\Sym^2(\mathfrak{g}[1])[-2]\to\mathfrak{h}$, and the bracket on $H(\mathfrak{e})\cong H(\mathfrak{g})\oplus H(\mathfrak{h})$ is determined in the same way by $f_*$.

Choose quasi-isomorphisms $\varphi:\mathfrak{g}\to H(\mathfrak{g})$ and $\psi:\mathfrak{h}\to H(\mathfrak{h})$. Without loss of generality, we may assume that both maps induce the identity on homology. Let $\bar\psi$ be a quasi-inverse to $\psi$. Then $(\bar\psi\circ f_*\circ\varphi^{\wedge2})_*=f_*$, so $$\bar\psi\circ f_*\circ\varphi^{\wedge 2}-f=d_{\mathfrak{h}}G+Gd_{\Sym}$$ for some homotopy operator $G:\Sym^2(\mathfrak{g}[1])[-2]\to \mathfrak{h}[-1]$. 

Now, since $\mathfrak{g}$ is abelian and acts trivially, this equation may be written as $$D(G)=\bar\psi\circ f_*\circ\varphi^{\wedge 2}-f,$$ where $D$ denotes the differential in the Chevalley-Eilenberg cochain complex computing $H^*_\Lie(\mathfrak{g},\mathfrak{h})$. Since extensions of $\mathfrak{g}$ by the module $\mathfrak{h}$ are classified by $H^2_\Lie(\mathfrak{g},\mathfrak{h})$, it follows that $f$ and $\bar\psi\circ f_*\circ\varphi^{\wedge2}$ determine isomorphic extensions, so that we may take $f=\bar\psi\circ f_*\circ\varphi^{\wedge 2}$ after choosing a different splitting. But then $\psi\circ f=f_*\circ \varphi^{\wedge 2}$, so that the composite $$\begin{CD}
\mathfrak{e}@>\cong>>\mathfrak{g}\oplus\mathfrak{h}@>(\varphi,\psi)>>H(\mathfrak{g})\oplus H(\mathfrak{h})@>\cong>>H(\mathfrak{e})
\end{CD}$$ is a map of Lie algebras. Since it is also a quasi-isomorphism of chain complexes, the proof is complete.
\end{proof}

\begin{proof}[Proof of Proposition \ref{Lie formality}]
The exact sequence $$0\to \mathfrak{h}\to \Lie(K[r])\to K[r]\to0$$ satisfies the hypotheses of Proposition \ref{technical lemma}, where $$\mathfrak{h}=\begin{cases}
K^{\otimes 2}[2r]&\quad r\text{ odd}\\
0&\quad r\text{ even.}
\end{cases}$$ By Proposition \ref{cotensor identification}, we have $$\textstyle\Map^{C_2}(\widetilde M^+,\Lie(K[r])\simeq (A_{PL}(\widetilde M^+)\otimes\Lie(K[r]))^{C_2}.$$ Since the operations of tensoring with the commutative algebra $A_{PL}(\widetilde M^+)$ and taking $C_2$ fixed points preserve the hypotheses of Proposition \ref{technical lemma} the claim follows. 
\end{proof}

\begin{remark}
Proposition \ref{Lie formality} asserts that $A_{PL}(M)\otimes\Lie(\mathbb{Q}[r])$ is formal whenever $M$ is compact and orientable. When $r$ is odd, this fact may be suprising at first glance, since $M$ is not assumed to be formal. 

A conceptual understanding of this phenomenon is afforded by the homotopy transfer theorem (see \cite[10.3]{LV}, for example). Indeed, let $A$ be any nonunital differential graded commutative algebra and $\mathfrak{g}$ any two-step nilpotent graded Lie algebra. Fixing an additive homotopy equivalence between $A$ and $H(A)$, we obtain a transferred $L_\infty$-algebra structure on $H(A)\otimes \mathfrak{g}$. The higher brackets of the transferred structure combine information about the Massey products of $A$ and the Lie bracket of $\mathfrak{g}$.

In our case, using the fact that $\mathfrak{g}$ has no nontrivial iterated brackets, the explicit formulas of the homotopy transfer theorem show that these higher brackets all vanish, which implies that $A\otimes\mathfrak{g}$ is formal. In other words, although the Massey products in $H(A)$ may be nontrivial, they are damped out by the nilpotence of $\mathfrak{g}$.
\end{remark}

%
%
%
%

\end{document}